\documentclass[11pt]{amsart}
\usepackage{amssymb}
\usepackage{amscd} 
\usepackage{amsfonts, amsmath, amsthm}
\usepackage{tikz-cd}
\usepackage{url}
\usepackage[a4paper, margin=1.0in]{geometry}
\usepackage{mathtools, mathrsfs, color} 
\usepackage{graphicx}
\usepackage{comment}
\usepackage{tcolorbox}
\usepackage{tikz,tikz-cd}
\usepackage[all,cmtip]{xy}
\numberwithin{equation}{section}
\usepackage{blkarray}
\usepackage{comment}
\usepackage[nobysame,alphabetic,initials,msc-links,lite,backrefs]{amsrefs}
\usepackage{hyperref}
\allowdisplaybreaks

\def\today{\number\day\space\ifcase\month\or   January\or February\or
   March\or April\or May\or June\or   July\or August\or September\or
   October\or November\or December\fi\   \number\year}

\newtheorem{thm}{Theorem}[section]
\newtheorem{lem}[thm]{Lemma}
\newtheorem{prp}[thm]{Proposition}
\newtheorem{dfn}[thm]{Definition}
\newtheorem{cor}[thm]{Corollary}

\newtheorem{rmk}[thm]{Remark}
\newtheorem{ntn}[thm]{Notation}
\newtheorem{exa}[thm]{Example}

\newcommand{\beq}{\begin{equation}}
\newcommand{\eeq}{\end{equation}}
\newcommand{\beqr}{\begin{eqnarray*}}
\newcommand{\eeqr}{\end{eqnarray*}}
\newcommand{\bal}{\begin{align*}}
\newcommand{\eal}{\end{align*}}
\newcommand{\bei}{\begin{itemize}}
\newcommand{\eei}{\end{itemize}}

\newcommand{\af}{\alpha}
\newcommand{\bt}{\beta}

\newcommand{\te}{\theta}
\newcommand{\ld}{\lambda}

\newcommand{\om}{\omega}

\newcommand{\Gm}{\Gamma}

\newcommand{\Om}{\Omega}

\newcommand{\Q}{{\mathbb{Q}}}
\newcommand{\Z}{{\mathbb{Z}}}
\newcommand{\R}{{\mathbb{R}}}

\newcommand{\W}{{\mathcal{W}}}

\newcommand{\T}{{\mathbb{T}}}
\newcommand{\N}{{\mathbb{N}}}

\newcommand{\Tr}{{\mathrm{Tr}}}

\newcommand{\Aut}{{\mathrm{Aut}}}

\newcommand{\SL}{{\mathrm{SL}}}

\newcommand{\I}{\infty}
\newcommand{\E}{\mathcal{E}}

\newcommand{\A}{\mathscr{A}}

\title{Heisenberg-Weyl Representations and Morita equivalence for crossed products of Noncommutative solenoids}
\author[]{Pratik Kumar Kundu} 

\address{ Institute for Advancing Intelligence, TCG CREST, Sector V, Salt Lake, Kolkata 700091, India.}

\address{\href{mailto:pratik.kundu.79@tcgcrest.org}{pratik.kundu.79@tcgcrest.org}}

\keywords{Twisted group \texorpdfstring{$\rm C^*$}{C}-algebra; noncommutative solenoids; crossed product; Morita equivalence; Heisenberg--Weyl representation; \texorpdfstring{$p$}{p}-adic analysis}
\subjclass[2020]{Primary 46L55; Secondary 46L05, 46L08, 46L80}
\begin{document}
\begin{abstract}
\sloppy
    We study strong Morita equivalence for crossed products of noncommutative solenoids by cyclic subgroups of $\mathrm{SL}_2(\mathbb Z[1/p])$. For a large class of parameters, we construct a multiplier on \(\mathbb{Z}[1/p]^2\) which is invariant under the natural action of \(\mathrm{SL}_2(\mathbb{Z}[1/p])\) and cohomologous to the usual multiplier defining the solenoid. This invariant representative allows us to describe the corresponding crossed products as twisted group \(\mathrm{C}^*\)-algebras. We also show that the induced action of $\mathrm{SL}_2(\mathbb{Z})$ on the noncommutative solenoid is compatible with the classical Watatani action on the rotation algebras in the inductive-limit system. We then develop a Heisenberg--Weyl framework on $\mathrm{L}^2(\mathbb{Q}_p\times\mathbb{R})$ adapted to these invariant multipliers. Using explicit unitary operators implementing the generators of $\mathrm{SL}_2(\mathbb{Z}[1/p])$, we extend the Heisenberg equivalence bimodule to the crossed-product setting. As a consequence, we obtain strong Morita equivalences for crossed products by infinite cyclic subgroups and by the finite cyclic subgroups $\mathbb{Z}_2,\mathbb{Z}_3,\mathbb{Z}_4$ and $\mathbb{Z}_6$.
\end{abstract}
%%%%%%%%%%%%%%%%%%%%%%%%%%%%%%%%%%%%%%%%%%%%%%%%%%%%%%%%%%%%%%%%%%%%%%%%%%%%%%%%%%%%%%%%%%%%%%%%%%%%%%%%%%%%%%%%%%%%%%%%%%%%%%%%%%%%%%%%%%%%%%%%%%%%%%%%%%%%%%%%%%%%%%%%%%%%%%%%%%%%%%%%%%%%%%%%%%%%%%%%%%%%%%%%%%%%%%%%%%%%%%%%%%%%%%%%%%%%%%%%%%%%%%%%%%%%%%%%%%%%%%%%%%%%%%%%%%%%%%%%%%%%%
\maketitle \pagestyle{myheadings} \markboth{Pratik~Kumar~Kundu}{Noncommutative solenoids and their crossed products}

\section{Introduction}

    Consider the group
    \[
        \Z\left[1/p\right]
        :=
        \left\{
            \frac{j}{p^k}: j\in\Z,\ k\in\N
        \right\},
    \]
    which is the additive group of the ring of integers adjoining the
    multiplicative inverse of $p$, equipped with the discrete topology. Put
    \[
        \Gm_p
        :=
        \Z\left[1/p\right]\times \Z\left[1/p\right].
    \]
    Latr{\'e}moli{\`e}re and Packer \cite{JP11} introduced a family of twisted
    group \(\mathrm C^*\)-algebras
    \[
        \A_\alpha\cong C^*(\Gm_p,\Psi_\alpha),
    \]
    where \(\Psi_\alpha\) is a multiplier determined by a parameter:
    \[
        \alpha=(\alpha_n)_{n\in\N}\in
        \Xi_p
        :=
        \left\{
            (\alpha_n)_{n\in\N}\in\prod_{n\in\N}[0,1):
            p\alpha_{n+1}=\alpha_n+x_n,\ 
            x_n\in\{0,1,\ldots,p-1\}
        \right\}.
    \]
    These \(\mathrm C^*\)-algebras are termed noncommutative solenoids by the authors and are one of the first well-studied families of twisted group \(\mathrm C^*\)-algebras associated with abelian groups that are not compactly generated. A focus of
    Latr{\'e}moli{\`e}re and Packer's earlier work was the explicit construction
    of finitely generated projective modules over noncommutative solenoids
    \cites{JP13, JP14}. They later studied these \(\mathrm C^*\)-algebras from the perspective of noncommutative metric geometry \cite{JP17}. More recently, spectral triples on noncommutative solenoids have been investigated in \cites{AGI17, FLLP24, FLP26}. The classification theory of \(\mathrm C^*\)-algebras up to isomorphism and
    Morita equivalence is a central topic in operator algebras. For noncommutative solenoids, Latrémolière and Packer studied the problem of classification up to isomorphism \cite{JP11}, and Shen Lu subsequently identified the Morita equivalence classes of irrational noncommutative solenoids \cite{Lu22}. Noncommutative solenoids can also be realised as groupoid \(\mathrm C^*\)-algebras, with their equivalence bibundles and associated bimodules studied in \cite{CRG25}. This perspective places them close to the classification-oriented program of Deeley, Putnam, and Strung on minimal dynamical systems and their associated \(\mathrm C^*\)-algebras \cites{DPS18, DPS19}.
    
    An alternative description of a noncommutative solenoid is that it can be realised as the inductive limit of rotation algebras. More precisely, the algebra \(\A_{\af}\) is obtained from a sequence of rotation algebras $A_{\theta}$ with parameters \(\theta=\af_{2n}\), and the connecting maps are given on the canonical generators by taking \(p\)-th powers. For a prime $p$, we can naturally associate to each $\af\in\Xi_p$  a $p$-adic integer that is given by $x_{\af}=\sum_{i=0}^{\I} x_ip^i$. We will exploit this connection to $p$-adic analysis throughout our study. Since \(A_\theta\cong A_{\theta+n}\) for every \(n\in\mathbb Z\), changing any coordinate of \(\alpha\) by an integer does not affect the resulting solenoid. For this reason, Lu \cite{Lu22} introduced an additive group \(\Omega_p\) of real-valued sequences satisfying
    \(
    p\alpha_{n+1}\equiv \alpha_n \pmod{\mathbb Z},
    \)
    and associated to each \(\alpha\in\Omega_p\) a noncommutative solenoid through the same inductive-limit construction. The inductive limit description is especially useful when one wants to compare solenoids with rotation algebras and to transfer constructions from the rotation algebra setting to the solenoid setting. The group \(\mathrm{SL}_2(\Z)\) acts naturally on \(\Z^2\), and this action induces automorphisms of rotation algebras. Isomorphism and Morita equivalence for rotation algebras \(A_\theta\), as well as for crossed products of rotation algebras by cyclic subgroups of \(\mathrm{SL}_2(\mathbb Z)\), have been studied extensively in the literature (see \cites{ELPW10, BCHL18, BCHL21, EE93, RS99, Li04, Boc96, Cha23, Cha25}). In particular, the finite cyclic subgroups $\Z_2,\, \Z_3,\, \Z_4,$ and $ \Z_6$ play a distinguished role.
    
    For noncommutative solenoids, the natural symmetry group is larger. Since \(\Gm_p=\Z[1/p]^2\) is a free \(\Z[1/p]\)-module of rank two, every matrix
    \[
        A=
        \begin{pmatrix}
            a & b\\
            c & d
        \end{pmatrix}
        \in \mathrm{SL}_2(\Z[1/p])
    \]
    acts on \(\Gm_p\) by the usual matrix multiplication
    \[
        A\cdot
        \begin{pmatrix}
            x_1\\ x_2
        \end{pmatrix}
        =
        \begin{pmatrix}
            ax_1+bx_2\\
            cx_1+dx_2
        \end{pmatrix}.
    \]
    
    The purpose of this paper is to study (strong) Morita equivalence for crossed products of noncommutative solenoids by cyclic subgroups of \(\mathrm{SL}_2(\mathbb Z[1/p])\). Since this group acts naturally on \( \Gm_p \) by matrix multiplication, one expects it to induce automorphisms of suitable twisted group $\mathrm C^*$-algebras associated with \(\Gamma_p\). However, the standard multiplier \(\Psi_\alpha\) is not well adapted to this action. Therefore, our first step is to replace \(\Psi_\alpha\) by a cohomologous multiplier with better invariance properties.
        
    For this reason, we restrict attention to a distinguished class of parameters.
    We introduce the subset \(\Omega_p^{\mathrm{even}}\) consisting of those sequences
    \((\alpha_n)_{n\in\mathbb N}\in\Omega_p\) for which the defining integers \(x_n\) can all be chosen even. For these parameters, we define a skew-symmetric multiplier \(\omega_\alpha\) on \(\Gamma_p\) by
    \[
        \omega_\alpha
        \left(
            \left(\frac{j_1}{p^{k_1}},\frac{j_2}{p^{k_2}}\right),
            \left(\frac{j_3}{p^{k_3}},\frac{j_4}{p^{k_4}}\right)
        \right)
        =
        e\!\left(
            \frac{1}{2}
            \left(
                \alpha_{k_1+k_4}j_1j_4
                -
                \alpha_{k_2+k_3}j_2j_3
            \right)
        \right),
    \]
    where \(e(t)=e^{2\pi i t}\). The evenness assumption on the digits is exactly
    what makes this formula compatible with the defining relations of the solenoid.
    We prove that \(\omega_\alpha\) is cohomologous to the standard multiplier
    \(\Psi_\alpha\). We write
    \[
        \A_\alpha^\omega:=C^*(\Gamma_p,\omega_\alpha).
    \]
    Although \(\A_\alpha^\omega\) is isomorphic to the original solenoid
    \(\A_\alpha\), the \(\omega_\alpha\)-model has an important advantage:
    the natural \(\mathrm{SL}_2(\mathbb Z[1/p])\)-action on \(\Gamma_p\) preserves
    \(\omega_\alpha\). Therefore, each matrix
    \(A\in\mathrm{SL}_2(\mathbb Z[1/p])\) induces an automorphism of
    \(\A_\alpha^\omega\), and one can form crossed products
    \(
        \A_\alpha^\omega\rtimes_A H
    \)
    for cyclic subgroups \(H\subseteq \mathrm{SL}_2(\mathbb Z[1/p])\). We also compare this action with the inductive-limit description of the solenoid. When the acting matrix belongs to \(\mathrm{SL}_2(\mathbb Z)\), the action on \(\A_\alpha^\omega\) agrees, at each finite stage, with the classical Watatani--Brenken action on the corresponding rotation algebra. Thus the action on the solenoid is compatible with the familiar action on the rotation-algebra building blocks (cf. Subsection~\ref{inductive}).

    The second part of the paper develops a Heisenberg--Weyl framework on $\rm L^2(\Q_p\times\R)$ adapted to the above multipliers. From this point onward we assume that \(p\) is an odd prime. The oddness assumption is used to avoid the special \(2\)-adic difficulties caused by the Weyl normalisation involving the factor \(1/2\). This leads to the corresponding Weyl operators and allows us to construct explicit unitary operators implementing the basic generators of \(\mathrm{SL}_2(\mathbb Z[1/p])\), such as the Fourier transform operator, shear operators, and the \(p\)-scaling operator. These unitary operators satisfy covariance relations with the Weyl system. The covariance relations are the key analytic ingredient: using these relations, we show that Rieffel's Heisenberg equivalence bimodule between \(\A_\beta^\omega\) and \(\A_\alpha^\omega\) is compatible with the group actions. Consequently, the bimodule extends to an equivalence bimodule between the corresponding crossed products.
    
    We call a parameter \(\alpha\in\Xi_p\) regular if the following conditions hold:
    First, \(\alpha\) belongs to the subset
    \[
        \E_p
        :=
        \left\{
            \alpha=(\alpha_n)_{n\in\mathbb N}\in \prod_{n\in\mathbb N}[0,1):
            p\alpha_{n+1}=\alpha_n+x_n,\
            x_n\in\{0,2,4,\ldots,p-1\}
        \right\}
        \subseteq \Xi_p.
    \]
    Second, for each $\af\in\E_p$ the associated \(p\)-adic integer
    \(
        x_\alpha:=\sum_{n=0}^{\infty}x_np^n
    \)
    is a unit in \(\mathbb Z_p\), and its inverse also has only even \(p\)-adic
    digits. In other words, if
    \(
        x_\alpha^{-1}=\sum_{n=0}^{\infty}y_np^n,
    \)
    then all digits \(y_n\) are even. Equivalently,
    \(
        x_\alpha\in \mathcal K_p,
    \)
    where
    \[
        \mathcal K_p
        :=
        \left\{
            x\in\mathbb Z_p^\times:
            x=\sum_{n=0}^{\infty}x_np^n,\ 
            x^{-1}=\sum_{n=0}^{\infty}y_np^n,
            \text{ with all }x_n,y_n\text{ even}
        \right\}.
    \]
    Thus, a regular parameter means a parameter
    \(\alpha\in \E_p\) with \(\alpha_0\neq 0\) and \(x_\alpha\in\mathcal K_p\).

    For two $\mathrm{C^*}$-algebras $A$ and $B$, the notation $A\sim_{\mathrm{M.E}}B$ means $A$ is Morita equivalent to $B$. We now state our main results. The first concerns crossed products by the action of \(\Z\) generated by a matrix in \(\mathrm{SL}_2(\Z[1/p])\).
    \begin{thm}
    \label{intro theorem Z}
    Let \(p\) be an odd prime and let
    \(A\in\mathrm{SL}_2(\mathbb Z[1/p])\). Let \(\alpha\in\Xi_p\) be a regular
    parameter with $x_{\af}^{-1}=\sum_{i=0}^{\I}y_ip^i$, and let \(\beta\) be the parameter determined by
        \[
            \beta_n
            =
            \frac{1}{\alpha_0p^n}
            +
            \frac{\sum_{i=0}^{n-1}y_i p^i}{p^n}.
        \]
        Let \(B=TAT\), where $T=\begin{pmatrix}
            -1 & 0\\
            0 & 1\\
        \end{pmatrix}.$
    Then the Heisenberg equivalence bimodule between
    \(\A_\beta^\omega\) and \(\A_\alpha^\omega\) extends to the
    crossed products by the \(\mathbb Z\)-actions generated by \(B\) and \(A\).
    Consequently,
    \[
        \A_\beta^\omega\rtimes_B \mathbb Z
        \ \sim_{\mathrm{M.E.}}\
        \A_\alpha^\omega\rtimes_A \mathbb Z.
    \]
    \end{thm}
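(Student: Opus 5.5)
We plan to realise both algebras inside the Heisenberg setting on $\mathrm{L}^2(\Q_p\times\R)$ and then apply the standard criterion for extending an imprimitivity bimodule to crossed products. That criterion (Combes, Curto--Muhly--Williams) says the following. Suppose $X$ is an $\A_\beta^\omega$--$\A_\alpha^\omega$ equivalence bimodule. Suppose also there is a linear bijection $U\colon X\to X$ such that $U(b\cdot\xi\cdot a)=\sigma_B(b)\cdot U\xi\cdot\sigma_A(a)$, ${}_{\A_\beta}\langle U\xi,U\eta\rangle=\sigma_B({}_{\A_\beta}\langle\xi,\eta\rangle)$ and $\langle U\xi,U\eta\rangle_{\A_\alpha}=\sigma_A(\langle\xi,\eta\rangle_{\A_\alpha})$. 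Then the actions $\sigma_B$ and $\sigma_A$ are Morita equivalent, and $\A_\beta^\omega\rtimes_B\Z\sim_{\mathrm{M.E.}}\A_\alpha^\omega\rtimes_A\Z$. For the $\Z$ case it suffices to produce a single such $U$, with no cocycle condition to check, so the entire proof reduces to constructing $U$.

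\textbf{Step 1: set up the bimodule.} Take $X$ to be the completion of the Schwartz--Bruhat space $\mathcal S(\Q_p\times\R)$. Here $\A_\alpha^\omega$ acts on the right through Weyl operators $W(\gamma)$, $\gamma$ in an embedded lattice copy of $\Gm_p$ in $(\Q_p\times\R)^2$. The embedding is $(j/p^k)\mapsto$ a pair (real coordinate, $p$-adic coordinate) scaled by $\alpha$ and $x_\alpha$. The commutant lattice, scaled by $\alpha_0^{-1}$ and $x_\alpha^{-1}$, gives $\A_\beta^\omega$ acting on the left. The parameter $\beta_n=\frac1{\alpha_0p^n}+\frac{\sum_{i<n}y_ip^i}{p^n}$ is precisely what this annihilator lattice produces. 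The regularity hypotheses enter here: $x_\alpha^{-1}$ has even digits, so $\beta\in\E_p$ and the $\omega_\beta$-model is available. With the symmetric Weyl normalisation (this uses $p$ odd, so that $1/2\in\Z_p$), the inner products $\langle\xi,\eta\rangle_{\A_\alpha}(\gamma)=\langle W(\gamma)^*\xi,\eta\rangle$ and the analogous left inner product give Rieffel's Heisenberg equivalence bimodule. We take this as established from the earlier part of the paper.

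\textbf{Step 2: build $U$ as a metaplectic-type operator.} Write $A$ as a word in the generators of $\mathrm{SL}_2(\Z[1/p])$: the Fourier matrix $\begin{pmatrix}0&-1\\1&0\end{pmatrix}$, the shears $\begin{pmatrix}1&t\\0&1\end{pmatrix}$, $t\in\Z[1/p]$, and the diagonal $p$-scaling $\mathrm{diag}(p,p^{-1})$. Let $U_A$ be the corresponding product of the unitaries already constructed: the Fourier transform on $\Q_p\times\R$, multiplication by the quadratic character $e(\tfrac t2 x^2)$ (suitably combining the real and $p$-adic exponentials), and the dilation by $p$. The covariance relations give $U_AW(\gamma)U_A^*=W(\tilde A\gamma)$, where $\tilde A$ is the induced symplectic map on the embedded lattice. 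Because $\omega_\alpha$ is $\mathrm{SL}_2(\Z[1/p])$-invariant, the right action satisfies $U(\xi\cdot a)=U\xi\cdot\sigma_A(a)$. The operators $U_A$ preserve $\mathcal S(\Q_p\times\R)$, so $U$ restricts to the dense submodule.

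\textbf{Step 3: identify the left action.} On the commutant lattice, the same operator $U_A$ induces conjugation by $A$ transported through the duality used to define $\beta$. The coordinate swap in that duality reverses orientation of the symplectic pairing, and we expect the induced matrix to be $TAT$ with $T=\mathrm{diag}(-1,1)$. We will verify this on each generator: $T$ commutes with the diagonal scaling, conjugates a shear by $t$ to a shear by $-t$, and conjugates the Fourier matrix to its inverse. These are exactly the sign flips produced by the dual embedding. This gives $U(b\cdot\xi)=\sigma_B(b)\cdot U\xi$.

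\textbf{Step 4: check the inner products.} Compatibility of both inner products then follows from unitarity of $U_A$ together with the covariance relations, since $\langle U\xi,U\eta\rangle_{\A_\alpha}(\gamma)=\langle \xi,U_A^*W(\gamma)U_A\eta\rangle$. The criterion then yields the Morita equivalence of the crossed products.

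The main obstacle is Step 3, with the scalar phases. Covariance holds only up to the phases in the Weyl normalisation, and those phases must match $\omega_\beta$ and $\omega_\alpha$ exactly on both sides simultaneously. Checking this generator by generator requires the even-digit conditions on both $x_\alpha$ and $x_\alpha^{-1}$, and the real and $p$-adic quadratic characters must combine into a character trivial on the lattice. Since $U_A$ is defined via a word in the generators, it may be only projectively well defined. For a single $\Z$-action this projective ambiguity is harmless: we simply fix one word for $A$.
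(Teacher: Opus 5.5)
Your proposal is correct and follows essentially the paper's argument. Both module actions are restrictions of the symmetrically normalised Weyl system on $\mathrm{L}^2(\mathbb{Q}_p\times\mathbb{R})$, and $A$ is implemented by a word in the Fourier, quadratic-character and $p$-dilation unitaries. The left action is then twisted by $TAT$, which is checked on generators via $TJT=J^{-1}$, $TL_rT=L_{-r}$ and $TDT=D$, and the Combes/Curto--Muhly--Williams criterion is applied after transporting both inner products.

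The phase bookkeeping you flag as the main obstacle does not actually arise. With the symmetric normalisation, the covariance $S_A\mathcal{W}(z)S_A^{-1}=\mathcal{W}(Az)$ is exact on all of $(\mathbb{Q}_p\times\mathbb{R})^2$. The even-digit hypotheses are only needed to make $\omega_\alpha,\omega_\beta$ and their $\mathrm{SL}_2(\mathbb{Z}[1/p])$-invariance available, and to identify the module actions with Weyl operators. A small correction: $\beta$ lies in $\Omega_p^{\mathrm{even}}$, not in $\mathcal{E}_p$, since $\beta_0=1/\alpha_0>1$.
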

    
    The second main result treats finite cyclic subgroups. The finite-order
    elements of \(\mathrm{SL}_2(\mathbb Z[1/p])\) have possible orders
    \(1,2,3,4\), and \(6\). Therefore, the nontrivial finite cyclic cases are
    \(
        \mathbb Z_2,\,\mathbb Z_3,\, \mathbb Z_4,\) and \(\mathbb Z_6.
    \)
    
    \begin{thm}
    \label{intro theorem finite}
    Let \(p\) be an odd prime, and let \(\alpha\) and \(\beta\) be as above. Let
    \(F=\langle A\rangle\leq\mathrm{SL}_2(\mathbb Z[1/p])\) be one of the finite
    cyclic subgroups considered in Section~\ref{Morita eq 2}, and put \(B=TAT\). Then the
    Heisenberg equivalence bimodule between
    \(\A_\beta^\omega\) and \(\A_\alpha^\omega\) admits an
    \(F\)-equivariant structure with respect to the actions generated by \(B\) and
    \(A\). Hence
    \[
        \A_\beta^\omega\rtimes_B F
        \ \sim_{\mathrm{M.E.}}\
        \A_\alpha^\omega\rtimes_A F.
    \]
    In particular, this applies to crossed products by finite cyclic groups of
    orders \(2,3,4\), and \(6\).
    \end{thm}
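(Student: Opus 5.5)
The plan is to reduce the finite-group case to the standard criterion for equivariant Morita equivalence (Combes, Curto--Muhly--Williams). If $X$ is an $\A_\beta^\omega$--$\A_\alpha^\omega$ imprimitivity bimodule and $F$ acts on $X$ by linear bijections $U_g$ that are compatible with both inner products, then $X$ completes to an imprimitivity bimodule between $\A_\beta^\omega\rtimes_B F$ and $\A_\alpha^\omega\rtimes_A F$. Compatibility here means
\[
\langle U_g\xi,U_g\eta\rangle_{\A_\alpha^\omega}=\sigma^A_g(\langle\xi,\eta\rangle_{\A_\alpha^\omega}),\qquad
{}_{\A_\beta^\omega}\langle U_g\xi,U_g\eta\rangle=\sigma^B_g({}_{\A_\beta^\omega}\langle\xi,\eta\rangle),
\]
together with $U_g(b\xi a)=\sigma^B_g(b)\,U_g\xi\,\sigma^A_g(a)$. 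Since $F=\langle A\rangle$ is cyclic of order $m\in\{2,3,4,6\}$, everything reduces to a single operator $U=U_A$ satisfying these relations for the generator, plus the group relation $U^m=\mathrm{id}$.

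\textbf{Step 1: build $U_A$.} Write $A$ as a word in the basic generators of $\mathrm{SL}_2(\Z[1/p])$: the rotation $S=\begin{pmatrix}0&-1\\1&0\end{pmatrix}$, the shears, and the $p$-scaling matrices. Then set $U_A$ to be the corresponding product of the explicit unitaries on $\mathrm{L}^2(\Q_p\times\R)$, namely the Fourier transform, the chirp/shear multiplications, and the dilation. The Heisenberg module is a dense subspace of $\mathrm{L}^2(\Q_p\times\R)$ (Schwartz--Bruhat functions), and these unitaries preserve it. By the covariance relations with the Weyl system,
\[
U_A\,W_\alpha(x)\,U_A^{*}=W_\alpha(Ax),
\]
and the right action of $\A_\alpha^\omega$ is implemented by Weyl operators twisted by $T$. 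The conjugation $B=TAT$ is therefore exactly what makes the left action by $\A_\beta^\omega$ transform under $\sigma^B$.

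\textbf{Step 2: check compatibility.} The bimodule identity $U(b\xi a)=\sigma^B(b)U\xi\sigma^A(a)$ and the two inner-product identities will be deduced from the covariance relations by termwise computation on generators, exactly as in the $\mathbb Z$ case (Theorem~\ref{intro theorem Z}). That computation can be reused verbatim, since it only involves the single operator $U_A$.

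\textbf{Step 3: fix the group relation (main obstacle).} Metaplectic-type representations are only projective, so in general $U_A^m=c\cdot\mathrm{id}$ for some scalar $c\in\T$. The $\Q_p$-factor contributes a Weil index, for instance a Gauss-sum sign in the Fourier transform, and the real factor contributes an eighth root of unity. I would compute $c$ explicitly for the finite-order representatives treated in Section~\ref{Morita eq 2}, such as $S$, $-I$, and the order-$3$ and order-$6$ elements built from $S$ and a shear. Because $U^m$ commutes with all Weyl operators and is unitary, $U^m$ is automatically a scalar, so only the value of $c$ is needed. Replacing $U_A$ by $c^{-1/m}U_A$ then gives $U^m=\mathrm{id}$. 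This rescaling does not disturb Step 2, since the inner-product identities are sesquilinear in $(U\xi,U\eta)$ and the phases cancel. We thus obtain a genuine $F$-action, and the criterion yields the stated Morita equivalence.
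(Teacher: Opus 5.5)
Your proposal is correct and follows the paper's route. You build $S_A$ as a word in $S_J,S_P,S_D$, transfer the inner-product compatibility exactly as in the $\mathbb Z$ case, get $(S_A)^n=\lambda I$ from irreducibility of the Weyl system, and then rescale by any $\mu\in\mathbb T$ with $\mu^n=\lambda^{-1}$, so the explicit Weil-index computation of $c$ is unnecessary. One small slip: it is the left $\A_\beta^\omega$-action $M_{a,b}\cdot f=\W(\lambda(a),-\lambda(b))f$, not the right action, whose sign pattern produces the twist $B=TAT$.
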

    
    Finally, we show that the class of regular parameters is large. The set
    \(\mathcal K_p\) is nonempty for every odd prime \(p\), and it is uncountable
    for every prime \(p\geq 7\). Consequently, when \(p\geq 7\), our construction
    applies to an uncountable family of pairwise nonisomorphic noncommutative
    solenoids.
    
    The paper is organised as follows. In Section~\ref{prelim}, we recall twisted
    group \(\mathrm C^*\)-algebras, noncommutative solenoids, and the basic
    \(p\)-adic facts needed later. In Section~\ref{crossed product}, we construct
    the cohomologous invariant multiplier \(\omega_\alpha\), define the induced
    \(\mathrm{SL}_2(\mathbb Z[1/p])\)-action, and describe the crossed products as
    twisted group \(\mathrm C^*\)-algebras. We also compare the restricted
    \(\mathrm{SL}_2(\mathbb Z)\)-action with the Watatani--Brenken action on the
    rotation-algebra stages. In Section~\ref{bimodule}, we recall Rieffel's Heisenberg equivalence bimodule in the form needed for noncommutative solenoids. In Section~\ref{sec:weyl-metaplectic},
    we develop the Heisenberg--Weyl framework on \(\rm L^2(\mathbb Q_p\times\mathbb R)\)
    and construct the unitary operators implementing the generators of
    \(\mathrm{SL}_2(\mathbb Z[1/p])\). Section~\ref{Morita eq} uses these operators
    to extend the Heisenberg equivalence bimodule to crossed products by
    \(\mathbb Z\). Section \ref{Morita eq 2} is devoted to the finite cyclic case: we analyse the finite-order elements of \(\mathrm{SL}_2(\Z[1/p])\), study the cyclic subgroups they generate, and prove Theorem~\ref{intro theorem finite}. The appendix~\ref{Kp} proves the uncountability of \(\mathcal K_p\) for every prime \(p\geq 7\).

%%%%%%%%%%%%%%%%%%%%%%%%%%%%%%%%%%%%%%%%%%%%%%%%%%%%%%%%%%%%%%%%%%%%%%%%%%%%%%%%%%%%%%%%%%%%%%%%%%%%%%%%%%%%%%%%%%%%%%%%%%%%%%%%%%%%%%%%%%%%%%%%%%%%%%%%%%%%%%%%%%%%%%%%%%%%%%%%%%%%%%%%%%%%%%%%%%%%%%%%%%%%%%%%%%%%%%%%%%%%%%%%%%%%%%%%%%%%%%%%%%%%%%%%%%%%%%%%%%%%%%%%

\section{Preliminaries}\label{prelim}

\subsection{Twisted group \texorpdfstring{$\rm{C}^*$}{C*}-algebras}
   
    Let $G$ be an additive discrete group. A \textit{multiplier} on a discrete group $G$ is a function $\omega : G \times G \to \mathbb{T}$ satisfying
    $$
    \omega(x, y)\,\omega(x+y, z) = \omega(x, y+z)\,\omega(y, z)
    $$
    and
    $$
    \omega(x, 0) = 1 = \omega(0, x)
    $$
    for all $x, y, z \in G$. Consider the Banach space $\ell^1(G)$ with the multiplication
    $$
    (f *_\omega g)(x) := \sum_{y \in G} f(y)\,g(x-y)\,\omega(y, x-y)
    $$
    for $f, g \in \ell^1(G)$ and $x \in G$, and the involution
    $$
    f^*(x) := \overline{\omega(x, -x)\,f(-x)}
    $$
    for $f \in \ell^1(G)$ and $x \in G$. Then $\ell^1(G)$ becomes a Banach $*$-algebra. We denote this algebra by $\ell^1(G, \omega)$. %As in the case of group $\mathrm{C}^*$-algebras, we take the completion of $\ell^1(G, \omega)$ with respect to the norm coming from the “regular representation.”

    For a given multiplier $\om$ on $G$, an $\omega$-representation of $G$ on a Hilbert space $\mathcal{H}$ is a map $V : G \to \mathcal{U}(\mathcal{H})$ satisfying
    $$
    V(x)\,V(y) = \omega(x, y)\,V(x+y),\quad \forall ~x,y\in G.
    $$
    Every $\omega$-representation $V : G \to \mathcal{U}(\mathcal{H})$ extends to a $*$-homomorphism $V : \ell^1(G, \omega) \to B(\mathcal{H})$ by the formula
    $$
    V(f) := \sum_{x \in G} f(x)\,V(x).
    $$
    Consider the $\omega$-representation of $G$ given by
    $$
    (L_\omega(x)f)(y) := \omega(x, y-x)\,f(y-x)
    $$
    for all $f \in \ell^2(G)$ and $x, y \in G$. Then the twisted group $\mathrm{C}^*$-algebra, denoted $C^*(G, \omega)$, is defined to be the completion of $\ell^1(G, \omega)$ with respect to the norm $\|f\| := \|L_\omega f\|$. When $\omega=1$, this construction reduces to the usual group $\mathrm{C}^*$-algebra, so that $C^*(G,\omega)=C^*(G)$. In what follows we recall a sufficient condition at the level of multipliers that ensures the resulting twisted group $C^*$-algebras are isomorphic.
    \begin{exa}
        The rotation algebra $A_\theta$, associated to a real number $\theta$, is the universal $\mathrm{C}^*$-algebra generated by unitaries $U_1$ and $U_2$ satisfying the commutation relation
        \[
        U_2U_1 = e^{2\pi i\theta}U_1U_2.
        \]
        Let $G=\Z^2$. We define a multiplier 
        \[
        \omega_\theta((m_1, m_2), (n_1, n_2)) = e^{\pi i \theta (m_1 n_2 - m_2 n_1)}.
        \]
        Then \( C^*(\mathbb{Z}^2, \omega_\theta) \) is isomorphic to the rotation algebra \( A_\theta \), with \( \delta_{e_1} \) and \( \delta_{e_2} \) corresponding to its canonical unitaries $U_1$ and $U_2$ respectively, where $\{e_1, e_2\}$ denotes the standard basis of $\mathbb{Z}^2$.
    \end{exa}
    
    \begin{dfn}
	Let $G$ be an additive discrete group, and let $\omega, \omega'$ be multipliers on $G$. We say that $\omega$ and $\omega'$ are \emph{cohomologous}, written as $\omega\sim_{\mathrm{cohom}} \omega'$, if there exists a map $\rho:G\to \T$ such that
	\begin{equation}\label{coho}
	    \omega(s,t) = \rho(s)\rho(t)\overline{\rho(s+t)}\omega'(s,t)
	\end{equation}
	for all $s,t\in G$.
    \end{dfn}

    \begin{prp} \label{prop:cohom}
    	Let $\omega$ and $\omega'$ are multipliers on $G$ such that $\omega \sim_{\mathrm{cohom}}\omega'$, then 
        \[
        C^*(G, \omega) \cong C^*(G,\omega').
        \]
    \end{prp}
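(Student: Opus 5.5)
The plan is to build the isomorphism directly from the coboundary $\rho$: twisting the canonical generators by $\rho$ turns $\omega'$-relations into $\omega$-relations, first on $\ell^1$ and then on the completions. Fix $\rho:G\to\T$ with $\omega(s,t)=\rho(s)\rho(t)\overline{\rho(s+t)}\,\omega'(s,t)$ as in \eqref{coho}. Define $\Phi:\ell^1(G,\omega)\to\ell^1(G,\omega')$ by
\[
(\Phi f)(x):=\rho(x)\,f(x).
\]
Since $|\rho|=1$, $\Phi$ is an isometric linear bijection of $\ell^1(G)$, with inverse $g\mapsto \overline{\rho}\,g$.

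First I check that $\Phi$ respects the twisted convolution. For $f,g\in\ell^1(G,\omega)$ and $x\in G$,
\[
\Phi(f*_\omega g)(x)=\sum_y \rho(x)f(y)g(x-y)\,\omega(y,x-y).
\]
Substituting $\omega(y,x-y)=\rho(y)\rho(x-y)\overline{\rho(x)}\,\omega'(y,x-y)$ cancels $\rho(x)$. What remains is
\[
\sum_y (\rho f)(y)\,(\rho g)(x-y)\,\omega'(y,x-y)=(\Phi f*_{\omega'}\Phi g)(x).
\]

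Next I check the involution. Taking $s=x$ and $t=-x$ in \eqref{coho} gives $\omega(x,-x)=\rho(x)\rho(-x)\overline{\rho(0)}\,\omega'(x,-x)$. Setting $s=t=0$ in \eqref{coho} and using the normalisation $\omega(0,0)=\omega'(0,0)=1$ shows $\rho(0)=1$. Then
\[
\Phi(f^*)(x)=\rho(x)\,\overline{\rho(x)\rho(-x)\,\omega'(x,-x)\,f(-x)}=\overline{\omega'(x,-x)\,\rho(-x)f(-x)}=(\Phi f)^*(x).
\]
So $\Phi$ is an isometric $*$-isomorphism $\ell^1(G,\omega)\to\ell^1(G,\omega')$.

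It remains to show that $\Phi$ is isometric for the reduced norms $\|f\|=\|L_\omega f\|$, so that it extends to the completions. This is the step needing care. Let $M_\rho$ be the unitary on $\ell^2(G)$ given by pointwise multiplication by $\rho$. I claim $M_\rho L_\omega(x)M_\rho^*=\rho(x)\,L_{\omega'}(x)$. Indeed, for $h\in\ell^2(G)$,
\[
(M_\rho L_\omega(x)M_\rho^* h)(y)=\rho(y)\,\omega(x,y-x)\,\overline{\rho(y-x)}\,h(y-x).
\]
Applying \eqref{coho} with $s=x$ and $t=y-x$ turns this into $\rho(x)\,\omega'(x,y-x)\,h(y-x)=\rho(x)\,(L_{\omega'}(x)h)(y)$.

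Summing against $f$ gives $M_\rho L_\omega(f)M_\rho^*=L_{\omega'}(\Phi f)$. Hence $\|L_{\omega'}(\Phi f)\|=\|L_\omega(f)\|$, so $\Phi$ extends to an isometric $*$-isomorphism $C^*(G,\omega)\to C^*(G,\omega')$. The inverse is obtained the same way from $\overline\rho$. The only real obstacle is this last conjugation step, and it closes because \eqref{coho} applies directly with $s=x$, $t=y-x$.
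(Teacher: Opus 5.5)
Your proposal is correct and follows the same route as the paper: twist pointwise by $\rho$ to get a $*$-isomorphism of the $\ell^1$-algebras, then conjugate the regular representation by the multiplication unitary $M_\rho$ to see that the reduced norms agree. Your version is slightly more careful than the paper's. You verify the algebra maps on all of $\ell^1$ rather than only on the $\delta_s$, and you explicitly derive $\rho(0)=1$, which the paper's involution check uses without comment.
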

        
    \begin{proof}
        Since $\omega\sim_{\rm{cohom}}\om'$, there exists a function
        $\rho:G\to\mathbb{T}$ satisfying relation~(\ref{coho}). Let $\delta_s$ denote the canonical basis corresponding to $s\in G$.
        Define a linear map $\Phi:\ell^1(G,\omega)\longrightarrow \ell^1(G,\om')$
        on generators by $\Phi(\delta_s)=\rho(s)\delta_s.$
        Then
        \[
        \Phi(\delta_s *_\omega \delta_t)
        =
        \Phi\big(\omega(s,t)\delta_{s+t}\big)
        =
        \omega(s,t)\rho(s+t)\delta_{s+t}.
        \]
        Using the cohomology relation~(\ref{coho}), we get
        \[
        \Phi(\delta_s *_\omega \delta_t)
        =
        \rho(s)\rho(t)\om'(s,t)\delta_{s+t}
        =
        \Phi(\delta_s)*_{\om'} \Phi(\delta_t).
        \]
        Thus $\Phi$ preserves multiplication. Similarly, using $\omega(-s,s)=\rho(-s)\rho(s)\om'(-s,s),$ one checks that
        \[
        \Phi(\delta_s^{*_\omega})=\Phi(\delta_s)^{*_{\om'}}.
        \]
        Therefore $\Phi$ is a $*$-homomorphism. Its inverse is given by $\Psi(\delta_s)=\overline{\rho(s)}\delta_s,$ so $\Phi$ is a $*$-isomorphism.
        
        Moreover, the regular representations are unitarily equivalent. Indeed, if
        $D:\ell^2(G)\to \ell^2(G)$ is defined by
        \[
        (Df)(s)=\rho(s)f(s),
        \]
        then
        \[
        D L_\omega(s)D^*=\rho(s)L_{\om'}(s).
        \]
        Hence $\Phi$ preserves the regular representation norm and therefore extends
        to a $*$-isomorphism
        \[
        C^*(G,\omega)\cong C^*(G,\om').
        \]
    \end{proof}
%%%%%%%%%%%%%%%%%%%%%%%%%%%%%%%%%%%%%%%%%%%%%%%%%%%%%%%%%%%%%%%%%%%%%%%%%%%%%%%%%%%%%%%%%%%%%%%%%%%%%%%%%%%%%%%%%%%%%%%%%%%%%%%%%%%%%%%%%%%%%%%%%%%%%%%%%%%%%%%%%%%%%%%%%%%%%%%%%%%%%%%%%%%%%%%%%%%%%%%%%%%%%%%%%%%%%%%%%%%%%%%%%%%%%%%%%%%%%%%%%%%%%%%%%%%%%%%%%%%%%%%%%%%

\subsection{Noncommutative solenoids}
   In \cite{JP11}, the authors give a general introduction to this class of $\mathrm{C}^*$-algebras, including their basic definitions and fundamental properties. In the present section, we recall the background necessary for the definition of noncommutative solenoids.

    Fix an integer $p>1$, and consider the additive subgroup
    \[
    \Z[1/p]
       :=
       \left\{
          \frac{j}{p^k}\in \Q : j\in\Z,\ k\in\N
       \right\}
    \]
    of $\Q$, consisting of rational numbers whose denominators are nonnegative integral powers of $p$. Throughout, we always write elements of $\Z[1/p]$ in reduced form; that is, the exponent of $p$ in the denominator is assumed to be minimal. We equip $\Z[1/p]$ with the discrete topology.

    \begin{dfn}
    Fix an integer $p>1$. A \textbf{noncommutative solenoid} is a twisted group $\mathrm{C}^*$-algebra of the form
    \[
    C^*\!\left(\Z[1/p]\times \Z[1/p],\sigma\right),
    \]
    where $\sigma$ is a multiplier on the group
    \[
    \Gm_p:=\Z[1/p]\times \Z[1/p].
    \]
    \end{dfn}
    
    Since cohomologous multipliers give rise to $*$-isomorphic twisted group $\mathrm{C}^*$-algebras, the classification of multipliers up to cohomology plays a fundamental role. For the group $\Gamma_p$, the relevant cohomology classes are described by the following result.
    
    \begin{thm}[\cite{JP11}, Theorem~2.3]
    If $\sigma$ is a multiplier on $\Gamma_p$, then there exists
    \[
    \af\in\Xi_p
    :=
    \left\{
    (\af_n)_{n\in\N}\in \prod_{n\in\N}[0,1)
    :
    p\af_{n+1}=\af_n+x_n
    \text{ with }
    x_n\in\{0,1,\dots,p-1\}
    \right\}
    \]
    such that $\sigma$ is cohomologous to the multiplier $\Psi_\af$ defined by
    \[
    \Psi_{\af}\left(
    \left(\frac{j_1}{p^{k_1}},\frac{j_2}{p^{k_2}}\right),
    \left(\frac{j_3}{p^{k_3}},\frac{j_4}{p^{k_4}}\right)
    \right)
    =
    e^{ 2\pi i (\af_{k_1+k_4})j_1j_4}
    \]
    for all 
    $\left(\frac{j_1}{p^{k_1}},\frac{j_2}{p^{k_2}}\right),
    \left(\frac{j_3}{p^{k_3}},\frac{j_4}{p^{k_4}}\right)\in \Gamma_p.$
    \end{thm}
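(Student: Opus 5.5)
The statement is the classification of multipliers on $\Gamma_p$ up to cohomology, cited from \cite{JP11}. I would prove it by splitting an arbitrary multiplier into a symmetric part, which is a coboundary, and an antisymmetric bicharacter, which is then parametrised by a compatible sequence of angles.

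\textbf{Step 1: reduce to an antisymmetric bicharacter.} For a discrete abelian group $G$, the class of a multiplier $\sigma$ is determined by its antisymmetrisation
\[
  b_\sigma(s,t)=\sigma(s,t)\overline{\sigma(t,s)}
\]
(Kleppner's theorem). This map is a bicharacter, it satisfies $b_\sigma(s,s)=1$, and it is trivial exactly when $\sigma$ is a coboundary. On the divisible target $\mathbb T$ the symmetric part of $\sigma$ always splits. So it suffices to determine all such bicharacters on $\Gamma_p$ and to check that each one arises as $b_{\Psi_\alpha}$ for some $\alpha\in\Xi_p$.

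\textbf{Step 2: describe the alternating bicharacters.} An alternating bicharacter $b$ on $\Gamma_p=\Z[1/p]e_1\oplus\Z[1/p]e_2$ is determined by the pairing
\[
  \phi(x,y)=b(xe_1,ye_2), \qquad x,y\in\Z[1/p].
\]
Bi-additivity gives $\phi(1/p^k,1/p^l)=\phi(1/p^{k+l},1)$. So $\phi$ is determined by the character $\chi(x)=\phi(x,1)$ on $\Z[1/p]$, which is an element of the solenoid $\widehat{\Z[1/p]}$. Writing $\chi(1/p^n)=e(\alpha_n)$ with $\alpha_n\in[0,1)$, the relation $\chi(1/p^n)^p=\chi(1/p^{n-1})$ becomes
\[
  p\alpha_{n+1}=\alpha_n+x_n, \qquad x_n\in\{0,\dots,p-1\}.
\]
Hence $\alpha\in\Xi_p$.

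\textbf{Step 3: match with $\Psi_\alpha$.} First check that $\Psi_\alpha$ is a well-defined multiplier. The defining relation on $\alpha$ ensures that $e(\alpha_{k_1+k_4}j_1j_4)$ does not depend on the fraction representatives, and the formula is a bicharacter in $\bigl(\tfrac{j_1}{p^{k_1}},\tfrac{j_4}{p^{k_4}}\bigr)$. Its antisymmetrisation on the pair $(xe_1,ye_2)$ equals $\chi(xy)$, so it recovers exactly the $b$ of Step 2. Kleppner's theorem then gives $\sigma\sim_{\mathrm{cohom}}\Psi_\alpha$.

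\textbf{Main obstacle.} The delicate point is Step 1: showing that a multiplier with trivial antisymmetrisation is a coboundary on this non-finitely-generated group. I would handle it by writing $\Gamma_p$ as an increasing union of the copies $p^{-n}\Z^2$. On each copy the statement is the standard fact for $\Z^2$. I would then pass to the limit using $\varprojlim{}^1=0$, which holds because the relevant groups of $\mathbb T$-valued characters are compact, so the Mittag-Leffler condition is satisfied.
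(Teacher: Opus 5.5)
The paper gives no proof of this statement and only cites \cite{JP11}. Your argument is correct and follows the standard route to this classification: Kleppner's identification of cohomology classes with alternating bicharacters, then the observation that every alternating bicharacter on $\Z[1/p]^2$ has the form $((x_1,x_2),(y_1,y_2))\mapsto\chi(x_1y_2)\overline{\chi(y_1x_2)}$ for a character $\chi$ of $\Z[1/p]$, with such characters parametrised by $\Xi_p$.

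Three minor remarks. First, in Step~2 you should say why $b$ is trivial on each summand $\Z[1/p]e_i$: the group $\Z[1/p]$ is locally cyclic, so any alternating bicharacter on it vanishes. Second, the ``main obstacle'' is not really one, since $\mathrm{Ext}(G,\mathbb T)=0$ for every abelian $G$ by divisibility of $\mathbb T$, as you already note in Step~1. Third, if you do run the limit argument, Mittag-Leffler holds because the restriction maps $\widehat{p^{-n-1}\Z^2}\to\widehat{p^{-n}\Z^2}$, i.e.\ $(z,w)\mapsto(z^p,w^p)$ on $\mathbb T^2$, are surjective. Compactness of the groups does not by itself give Mittag-Leffler.
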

    
    It follows immediately that $\Psi_{\af}$ and $\Psi_{\bt}$ are cohomologous if and only if $\af=\bt\in\Xi_p$.  In view of this theorem, it is enough to restrict our attention to multipliers of the form $\Psi_\af$ with $\af\in\Xi_p$. This leads to the following notation.
    
    \begin{ntn}
    For each integer $p>1$ and each $\alpha\in \Xi_p$, we write
    \[
        \A_\alpha:=C^*(\Gamma_p,\Psi_\alpha).
    \]
    \end{ntn}
    
    The multiplier $\Psi_\alpha$ depends only on the coordinates of $\alpha$ modulo integers. Hence replacing any coordinate of $\alpha\in\Xi_p$ by an integer translate does not change the associated multiplier, and therefore does not change the noncommutative solenoid. However, such a replacement need not remain inside the chosen parameter space $\Xi_p$. For this reason, Lu \cite{Lu22} introduces the larger group
    \[
    \Omega_p
    :=
    \left\{
    (\alpha_n)_{n\in\mathbb N}\in \mathbb R^{\mathbb N}
    :
    \text{ for every } n\in\mathbb N,\ \text{there exists } x_n\in\mathbb Z
    \text{ such that }
    p\alpha_{n+1}=\alpha_n+x_n
    \right\},
    \]
    with pointwise addition. There is a natural surjective group homomorphism
    \[
    h:\Omega_p\longrightarrow \Xi_p,\qquad
    h\big((\alpha_n)_{n\in\mathbb N}\big)
    =
    (\alpha_n \bmod \mathbb Z)_{n\in\mathbb N}.
    \]
    Thus an element of $\Omega_p$ may be regarded as a real-valued lift of an element of $\Xi_p$. If $\widetilde\alpha\in\Omega_p$ satisfies $h(\widetilde\alpha)=\alpha$, then the multiplier determined by $\widetilde\alpha$ agrees with the multiplier determined by $\alpha$. Consequently, the corresponding twisted group C$^*$-algebra is unchanged. In what follows, we shall often define a noncommutative solenoid using a parameter $\alpha\in\Omega_p$, and we shall make no notational distinction between
    \[
    C^*(\Gamma_p,\Psi_\alpha)
    \quad\text{and}\quad
    C^*(\Gamma_p,\Psi_{h(\alpha)}).
    \]
    
    It is useful to provide an alternative description of our noncommutative solenoids. The noncommutative solenoid $\A_{\af}$ is the universal $\rm{C}^*$ generated by the unitaries $W_{x,y}$ for all $(x,y)\in\Gm_p$ satisfying the relation
        \[
        W_{\frac{j_1}{p^{k_1}},\frac{j_2}{p^{k_2}}} W_{\frac{j_3}{p^{k_3}},\frac{j_4}{p^{k_4}}}=\Psi_{\af}\left(
        \left(\frac{j_1}{p^{k_1}},\frac{j_2}{p^{k_2}}\right),
        \left(\frac{j_3}{p^{k_3}},\frac{j_4}{p^{k_4}}\right)
        \right) W_{\frac{j_1}{p^{k_1}}+\frac{j_3}{p^{k_3}}, \frac{j_2}{p^{k_2}}+\frac{j_4}{p^{k_4}}},
        \]
        where $\cfrac{j_1}{p^{k_1}}, \cfrac{j_2}{p^{k_2}}, \cfrac{j_3}{p^{k_3}}, \cfrac{j_4}{p^{k_4}}\in\Z\left[\cfrac{1}{p}\right].$
    
    Next, we review the construction of noncommutative solenoids as inductive limit algebras of rotation algebras that was described in detail in \cite{JP11}. Recall that for any $\theta\in\R$, the rotation algebra $A_\theta$ is the universal $\mathrm{C}^*$-algebra generated by two unitaries $U_\theta$ and $V_\theta$ satisfying the relation
        \[
        V_\theta U_\theta=e^{2\pi i\theta}U_\theta V_\theta.
        \]
    
    \begin{thm}\cite{JP11}*{Theorem~3.7}
    Let $p>1$ and let $\af=(\af_n)_{n\in\N}\in\Xi_p$. For each $n\in\N$, let
    \[
    \varphi_n:A_{\af_{2n}}\to A_{\af_{2n+2}}
    \]
    be the unique $*$-homomorphism determined by
    \begin{equation} \label{connecting mor}
    \varphi_n(U_{2n})=U_{2n+2}^p
    \qquad\text{and}\qquad
    \varphi_n(V_{2n})=V_{2n+2}^p.   
    \end{equation}
    Then the noncommutative solenoid $\A_{\af}$ is the inductive limit of the system
    $(A_{\af_{2n}},\varphi_n),$ where the embedding map $\nu_n:A_{\af_{2n}}\to \A_{\af}$ is given by
    \[
    \nu_n(V_{2n})= W_{\frac{1}{p^n},0} \quad,\quad \nu_n(U_{2n})=W_{0,\frac{1}{p^n}}.
    \]
    \end{thm}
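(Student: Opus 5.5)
The statement to prove is \cite{JP11}*{Theorem~3.7}. The plan is to realise the inductive limit concretely inside $\A_\alpha$ and then invoke the universal property of $\A_\alpha$ to see that the union of the images is everything.

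\textbf{Step 1: the maps $\nu_n$.} Put $X_n:=W_{\frac{1}{p^n},0}$ and $Y_n:=W_{0,\frac{1}{p^n}}$. From the defining relation with the multiplier $\Psi_\alpha$ we compute
\[
X_nY_n=\Psi_\alpha\Big(\big(\tfrac1{p^n},0\big),\big(0,\tfrac1{p^n}\big)\Big)W_{\frac1{p^n},\frac1{p^n}}=e(\alpha_{2n})\,W_{\frac1{p^n},\frac1{p^n}},
\]
and $Y_nX_n=\Psi_\alpha\big((0,\tfrac1{p^n}),(\tfrac1{p^n},0)\big)W_{\frac1{p^n},\frac1{p^n}}=W_{\frac1{p^n},\frac1{p^n}}$, since $j_1j_4=0$ there. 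Hence $X_nY_n=e(\alpha_{2n})Y_nX_n$. So $V_{2n}\mapsto X_n$, $U_{2n}\mapsto Y_n$ satisfies $V U=e(\alpha_{2n})UV$, and universality of $A_{\alpha_{2n}}$ gives $\nu_n$.

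\textbf{Step 2: compatibility $\nu_{n+1}\circ\varphi_n=\nu_n$.} On each coordinate axis $\Psi_\alpha$ is trivial ($j_1j_4=0$ again), so $x\mapsto W_{x,0}$ and $y\mapsto W_{0,y}$ are honest homomorphisms. Therefore $X_{n+1}^p=X_n$ and $Y_{n+1}^p=Y_n$, which is exactly compatibility with \eqref{connecting mor}. For $\varphi_n$ itself to exist, $U_{2n+2}^p,V_{2n+2}^p$ must satisfy the relation with parameter $p^2\alpha_{2n+2}$. We have $p^2\alpha_{2n+2}=p\alpha_{2n+1}+px_{2n+1}=\alpha_{2n}+x_{2n}+px_{2n+1}\equiv\alpha_{2n}\pmod{\Z}$, so $\varphi_n$ exists by universality. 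The universal property of the inductive limit then yields a $*$-homomorphism $\nu:\varinjlim A_{\alpha_{2n}}\to\A_\alpha$.

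\textbf{Step 3: surjectivity and injectivity.} Surjectivity: every $W_{j/p^k,l/p^k}$ is a scalar multiple of $X_k^jY_k^l$, so the union of the images is dense. Injectivity is the main point. I would build an inverse via universality of $\A_\alpha$. Define $\widetilde W_{\frac{j_1}{p^{k_1}},\frac{j_2}{p^{k_2}}}:=\mu_n(V_{2n}^{j_1p^{n-k_1}}U_{2n}^{j_2p^{n-k_2}})$ for $n\ge k_1,k_2$, where $\mu_n:A_{\alpha_{2n}}\to\varinjlim$ are the canonical maps. This is independent of $n$ by \eqref{connecting mor}. We then check the $\Psi_\alpha$-relation. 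Moving $U^{j_2p^{n-k_2}}$ past $V^{j_3p^{n-k_3}}$ produces the phase $e(\alpha_{2n}\,j_2j_3\,p^{2n-k_2-k_3})$. Since $p^{m}\alpha_{a+m}\equiv\alpha_a \pmod \Z$, this equals $e(\alpha_{k_2+k_3}j_2j_3)$. This does not literally match $\Psi_\alpha$, whose factor involves $\alpha_{k_1+k_4}j_1j_4$. I would fix this by using the ordering $U^{\cdot}V^{\cdot}$ in the definition of $\widetilde W$ (equivalently, swapping roles as in the statement's $\nu_n(U_{2n})=W_{0,1/p^n}$), so that the commutator phase becomes $e(\alpha_{k_1+k_4}j_1j_4)$.

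Universality of $\A_\alpha$ then gives a map $\A_\alpha\to\varinjlim$ inverse to $\nu$ on generators, so $\nu$ is an isomorphism. I expect the main obstacle to be the ordering and sign bookkeeping in this phase computation: the index identity $p^{m}\alpha_{a+m}\equiv\alpha_a$ must match exactly the $\Psi_\alpha$ convention, including the $e(\alpha)$ versus $e(-\alpha)$ orientation in $V U=e(\theta)UV$.
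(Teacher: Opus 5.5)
The paper gives no proof of this statement: it is quoted from Latr\'emoli\`ere--Packer and used as a black box, so there is no internal argument to compare with. Your proposal is a correct, self-contained proof.

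The obstacle you anticipate at the end of Step~3 does not arise. In the ordering you discard, the phase is actually $e(-\alpha_{k_2+k_3}j_2j_3)$, not $e(+\cdots)$, because $VU=e(\theta)UV$ gives $U^bV^c=e(-\theta bc)V^cU^b$. This is irrelevant once you switch orderings. With $\widetilde W_x:=\mu_n\bigl(U_{2n}^{\,j_2p^{n-k_2}}V_{2n}^{\,j_1p^{n-k_1}}\bigr)$ you have $V^aU^d=e(\alpha_{2n}ad)U^dV^a$, and hence $\widetilde W_x\widetilde W_y=e(\alpha_{2n}p^{2n-k_1-k_4}j_1j_4)\widetilde W_{x+y}=\Psi_\alpha(x,y)\widetilde W_{x+y}$, with no leftover sign.

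This ordering also needs no scalar correction when composed with $\nu$. Since $\Psi_\alpha((0,x_2),(x_1,0))=1$, one has $W_x=W_{0,x_2}W_{x_1,0}=Y_n^{\,j_2p^{n-k_2}}X_n^{\,j_1p^{n-k_1}}=\nu(\widetilde W_x)$. Moreover $\sigma(W_{1/p^n,0})=\mu_n(V_{2n})$ and $\sigma(W_{0,1/p^n})=\mu_n(U_{2n})$. So $\nu\circ\sigma$ and $\sigma\circ\nu$ are the identity on generators.

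One point should be made explicit. Your map $\sigma:\A_\alpha\to\varinjlim A_{\alpha_{2n}}$ uses the universal property of $\A_\alpha$ for $\Psi_\alpha$-representations, but the paper defines $C^*(G,\omega)$ as a completion in the regular representation. The two agree because $\Gamma_p$ is abelian, hence amenable; the paper records this universal description right after defining $\A_\alpha$.

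A common alternative route to injectivity is as follows:
\begin{itemize}
\item write $\Gamma_p=\bigcup_n p^{-n}\Z^2$;
\item identify $C^*(p^{-n}\Z^2,\Psi_\alpha|)$ with $A_{\alpha_{2n}}$, since on $p^{-n}\Z^2$ the multiplier reads $e(\alpha_{2n}ad)$;
\item use that a subgroup inclusion of discrete groups induces an isometric embedding of reduced twisted group algebras.
\end{itemize}
Then each $\nu_n$ is injective and $\A_\alpha$ is the closure of an increasing union. Your inverse-map argument avoids this embedding lemma, at the cost of invoking universality of $\A_\alpha$. That is a genuine convenience, because for rational $\alpha_{2n}$ the algebra $A_{\alpha_{2n}}$ is not simple, so injectivity of $\nu_n$ is not automatic. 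Both approaches are fine.
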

    Since the rotation algebras $A_\theta$ and $A_{\theta + n}$ are isomorphic for any $n \in \Z$, we can again replace $\alpha \in \Xi_p$ with any $\tilde{\alpha} \in \Omega_p$ satisfying $h\left( \tilde{\alpha} \right) = \alpha$ without changing the direct limit defined from $\alpha$. 
%%%%%%%%%%%%%%%%%%%%%%%%%%%%%%%%%%%%%%%%%%%%%%%%%%%%%%%%%%%%%%%%%%%%%%%%%%%%%%%%%%%%%%%%%%%%%%%%%%%%%%%%%%%%%%%%%%%%%%%%%%%%%%%%%%%%%%%%%%%%%%%%%%%%%%%%%%%%%%%%%%%%%%%%%%%%%%%%%%%%%%%%%%%%%%%%%%%%%%%%%%%%%%%%%%%%%%%%%%%%%%%%%%%%%%%%%%%%%%%%%%%%%%%%%%%%%%%%%%%%%%%%%%%%%%%%%%%%%%%%%%%%%

\subsection{\texorpdfstring{$p$}{p}-adic fields} 

    We recall some basic facts about the field of \(p\)-adic numbers, following the
    standard algebraic construction; see Chapter~1 of \cite{Robert2000} for further
    details. This field will be used in our construction of the Heisenberg bimodule.
    
	For a fixed prime $p$, recall that a \emph{$p$-adic integer} is a formal series $a = \sum_{j = 0}^\infty a_j p^j$ with integral coefficients $a_j$ satisfying $0 \leq a_j \leq p-1$.
	With the usual addition and multiplication, the set of all such series forms an
    integral domain, denoted by \(\mathbb Z_p\). Its additive identity is $0 = \sum_{j = 0}^\infty 0 \cdot p^j$, and multiplicative identity is $1 = 1 + \sum_{j = 1}^\infty 0 \cdot p^j$.
    If \(a\neq 0\), then there is a unique integer \(v\geq 0\) such that
    \(a_v\neq 0\) and \(a_j=0\) for all \(j<v\). We call \(v\) the
    \emph{\(p\)-adic order} of \(a\), and write $\rm{ord}(a)=v.$
    We also put \(\rm{ord}(0)=\infty\).

	\begin{lem}
		The group $\Z_p^\times$ of invertible $p$-adic integers consists exactly of the $p$-adic integers of order $0$. That is, $a \in \Z_p^\times$ if and only if $a_0 \neq 0$.
	\end{lem}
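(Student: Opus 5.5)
We prove the two implications separately, working directly with the digit expansion $a=\sum_{j\ge 0}a_jp^j$ and the ring operations on $\Z_p$ (carrying digit by digit, equivalently via the reductions $\Z_p\to\Z/p^n\Z$, $a\mapsto \sum_{j<n}a_jp^j \bmod p^n$, which are ring homomorphisms).

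\emph{Necessity.} Suppose $a\in\Z_p^\times$, so $ab=1$ for some $b\in\Z_p$. Reducing modulo $p$ (the first-digit map $\Z_p\to\Z/p\Z$ is a ring homomorphism) gives $a_0b_0\equiv 1 \pmod p$. Hence $a_0\neq 0$.

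\emph{Sufficiency.} Suppose $a_0\neq 0$. We construct the digits of $b=a^{-1}$ inductively.
\begin{itemize}
\item Since $p$ is prime and $1\le a_0\le p-1$, the residue $a_0$ is invertible in $\Z/p\Z$. Choose $b_0\in\{1,\dots,p-1\}$ with $a_0b_0\equiv 1\pmod p$.
\item Suppose $b_0,\dots,b_{n-1}$ have been chosen so that $B_n=\sum_{j<n}b_jp^j$ satisfies $aB_n\equiv 1\pmod{p^n}$.
\item Write $aB_n\equiv 1+c\,p^n \pmod{p^{n+1}}$ with $c\in\{0,\dots,p-1\}$. We need $b_n$ with $a(B_n+b_np^n)\equiv 1\pmod{p^{n+1}}$, i.e. $c+a_0b_n\equiv 0\pmod p$.
\item This is solvable, uniquely in $\{0,\dots,p-1\}$, because $a_0$ is invertible mod $p$.
\end{itemize}
The resulting series $b=\sum b_jp^j$ satisfies $ab\equiv 1\pmod{p^n}$ for every $n$. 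Since an element of $\Z_p$ all of whose digits vanish is $0$, we conclude $ab-1=0$, so $b=a^{-1}$.

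\emph{Alternative.} One could instead write $a=a_0(1-u)$ with $u\in p\Z_p$ and use the geometric series $\sum_k u^k$, which converges digitwise since $u^k\in p^k\Z_p$, together with the inverse of $a_0$ in $\Z_p$. Inverting $a_0$ in $\Z_p$ itself needs the same lifting argument, so the inductive construction above is cleaner.

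The only point needing care is that the digit-truncation maps are compatible with the ring structure, which is exactly how addition and multiplication with carries are defined on $\Z_p$. There is no genuine obstacle: the lemma is the standard Hensel-type lifting of an inverse mod $p$, and the primality of $p$ is used exactly once, to invert $a_0$ mod $p$.
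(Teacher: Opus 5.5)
Your proof is correct. The paper gives no argument of its own; it only says ``This is well known.'' Your two steps are the standard argument the paper is alluding to:
\begin{itemize}
\item necessity, by reducing $ab=1$ modulo $p$;
\item sufficiency, by lifting the inverse of $a_0$ modulo $p$ digit by digit to an inverse in $\Z_p$.
\end{itemize}
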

    \begin{proof}
    	This is well known.
    \end{proof}

    The field of \emph{\(p\)-adic numbers} is the field of fractions of
    \(\mathbb Z_p\), and is denoted by \(\mathbb Q_p\). Every nonzero element
    \(a\in\mathbb Q_p\) admits a unique expansion
    \[
        a=\sum_{j=v}^{\infty}a_jp^j,
    \]
    where \(v\in\mathbb Z\), \(a_v\neq 0\), and \(a_j\in\{0,1,\dots,p-1\}\) for all
    \(j\geq v\). Extending the definition of order for $p$-adic integers, the order of $a$ is given by $\mathrm{ord}(a) = v$. For each $a \in \Q_p$, if $\mathrm{ord}\left( a \right) = v$, then it is easy to deduce that $\mathrm{ord}\left( a^{-1} \right) = -v$.
	The \emph{fractional part} of a $p$-adic number is given by
	\[
	\left\{ a \right\}_p = \sum_{j = v}^{-1} a_j p^j \in \Z\left[ \frac1{p} \right].
	\]
    It is clear that $\left\{ a \right\}_p  = 0$ if and only if $a$ is a $p$-adic integer. Note that for a p-adic number $a=\sum_{j=v}^{\I} x_jp^j$ of order v,
    \[
    -a=(p-x_v)p^v+(p-1-x_{v+1})p^{v+1}+\cdots+(p-1-x_j)p^j+\cdots
    \] 
    If $a\in\Z_p$, then $\{-a\}_p=0$ and if $a\in\Q_p\setminus\Z_p,$ then $\{a\}_p+\{-a\}_p=1.$
	
	There is also an analytic construction of \(\mathbb Q_p\). Namely,
    \(\mathbb Q_p\) is the completion of \(\mathbb Q\) with respect to the
    \emph{\(p\)-adic absolute value}. If
    \[
        r=p^k\frac{m}{n}\in\mathbb Q,
    \]
    where \(m,n\in\mathbb Z\) are not divisible by \(p\), then
    \[
        |r|_p=p^{-k}.
    \]
    The natural embedding \(\mathbb Q\hookrightarrow\mathbb Q_p\) is an injective
    ring homomorphism, and its image consists precisely of those \(p\)-adic
    expansions whose coefficients are eventually periodic. In this sense,
    \(p\)-adic arithmetic extends the ordinary arithmetic of rational numbers.
    
    \begin{rmk}\label{remarkidentifyrationals}
        Unless otherwise stated, we identify a rational number with its image in
        \(\mathbb Q_p\). In particular, we shall freely regard elements of
        \(\mathbb Z[1/p]\) as \(p\)-adic numbers.
    \end{rmk}
	
	The following lemmas will be useful. 
	
	\begin{lem}\label{fractional-part-congruence} For $x = \sum_{j=v}^{\infty} x_j p^j \in \Q_p$, with $v = \mathrm{ord}(x) < \infty$,
		\[
		\left\{ x s_1 s_2 \right\}_p \equiv \left( \sum_{j = v}^{k_1 + k_2 - 1} x_j p^j \right) \cdot s_1 s_2 \mod \Z,
		\]
		where $s_i = \frac{j_i}{p^{k_i}} \in \Z\left[ \frac{1}{p} \right]$ for $i = 1, 2$. 
	\end{lem}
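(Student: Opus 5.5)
We write $s_1s_2 = j_1j_2/p^{k_1+k_2}$ with $j_1j_2\in\Z$, so the product $s_1s_2$ is an integer multiple of $p^{-(k_1+k_2)}$. We then split $x$ into two pieces,
\[
x = x' + x'', \qquad x' := \sum_{j=v}^{k_1+k_2-1} x_j p^j \in \Z[1/p], \qquad x'' := \sum_{j\ge k_1+k_2} x_j p^j \in p^{k_1+k_2}\Z_p .
\]
If $v\ge k_1+k_2$, the sum defining $x'$ is empty and $x'=0$; the argument below covers this case unchanged.

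For the piece $x''$ we have
\[
x'' s_1 s_2 = \Bigl(\sum_{j\ge k_1+k_2} x_j p^{j-k_1-k_2}\Bigr) j_1 j_2 ,
\]
which lies in $\Z_p$ because $j_1j_2\in\Z\subset\Z_p$. For the piece $x'$, the product $x's_1s_2$ is a rational number lying in $\Z[1/p]$, since it is a finite sum of terms $x_j j_1 j_2 p^{j-k_1-k_2}$.

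Next we record two properties of the fractional part $\{\cdot\}_p$.

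\emph{Additivity modulo $\Z$.} For $a,b\in\Q_p$ we have $\{a+b\}_p\equiv\{a\}_p+\{b\}_p \pmod{\Z}$. To see this, note that $a-\{a\}_p\in\Z_p$ by construction. Therefore $(a+b)-\{a\}_p-\{b\}_p$ lies in $\Z_p$. It also lies in $\Z[1/p]$, since $\{a+b\}_p$, $\{a\}_p$ and $\{b\}_p$ do, and $\Z[1/p]\cap\Z_p=\Z$. Combining this with $(a+b)-\{a+b\}_p\in\Z_p$ gives the congruence.

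\emph{Fractional part of an element of $\Z[1/p]$.} For $r\in\Z[1/p]$ we have $\{r\}_p\equiv r\pmod{\Z}$. Indeed, $r-\{r\}_p\in\Z_p\cap\Z[1/p]=\Z$. The identity $\Z_p\cap\Z[1/p]=\Z$ holds because an element of $\Z[1/p]$ has the form $m/p^k$, and it is $p$-integral only if $p^k\mid m$.

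Combining these facts, we obtain
\[
\{xs_1s_2\}_p \equiv \{x's_1s_2\}_p + \{x''s_1s_2\}_p \equiv x's_1s_2 + 0 \pmod{\Z},
\]
which is the claimed congruence. The only point requiring care is the identity $\Z_p\cap\Z[1/p]=\Z$ under the identification of Remark~\ref{remarkidentifyrationals}; it holds because the embedding $\Q\hookrightarrow\Q_p$ is a ring homomorphism compatible with $|\cdot|_p$. Every other step is routine bookkeeping.
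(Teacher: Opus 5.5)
Your proof is correct. The paper gives no argument of its own for this lemma; it only cites Lu's Lemma~3.3. Your proposal therefore supplies a self-contained proof where the paper outsources one. You split off the tail $x''\in p^{k_1+k_2}\Z_p$, whose product with $s_1s_2$ is a $p$-adic integer and so has fractional part $0$. You then use that $a\mapsto\{a\}_p$ is additive modulo $\Z$ and agrees with the identity modulo $\Z$ on $\Z[1/p]$; in other words, $\{\cdot\}_p$ induces the isomorphism $\Q_p/\Z_p\cong\Z[1/p]/\Z$. This is the natural direct proof, and it needs nothing beyond $\Z_p\cap\Z[1/p]=\Z$.

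There is one wording slip in your additivity step. The element $(a+b)-\{a\}_p-\{b\}_p$ lies in $\Z_p$, but in general it does not lie in $\Z[1/p]$. The element you should place in $\Z_p\cap\Z[1/p]$ is
\[
\{a+b\}_p-\{a\}_p-\{b\}_p=\bigl(a-\{a\}_p\bigr)+\bigl(b-\{b\}_p\bigr)-\bigl(a+b-\{a+b\}_p\bigr).
\]
It lies in $\Z_p$ because each bracket does, and it lies in $\Z[1/p]$ because each fractional part does. With that sentence corrected, the argument is complete.
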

	
	\begin{proof}
		For the proof, see \cite{Lu22}*{Lemma~3.3}
	\end{proof}
    
    \begin{lem}\label{inverse-multiply-one}
    	Let $x = \sum_{j = v}^{\infty} x_j p^j$ be a $p$-adic integer with $0 \leq \mathrm{ord}(x) = v < \infty$ and inverse $x^{-1} = \sum_{j = -v}^{\infty} y_j p^j$.
    	Then for all nonnegative integers $k$,
    	\[
    	\left(  \sum_{j = -v}^{-v+k} y_j p^{j+v}  \right)\left(  \sum_{j = v}^{v+k} x_{j} p^{j - v} \right) \equiv 1 \mod p^{k + 1}.
    	\]
    \end{lem}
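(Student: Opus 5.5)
The plan is to reduce the statement to a congruence between two integer truncations, and then to read that congruence off from the identity $x\cdot x^{-1}=1$ in $\mathbb Z_p$. Write $x=p^v u$ with $u=\sum_{j\ge v}x_jp^{j-v}$. Then $u$ is a $p$-adic integer whose lowest digit is $x_v\neq 0$, so by the preceding lemma $u\in\mathbb Z_p^\times$. We have $x^{-1}=p^{-v}u^{-1}$. Since $x^{-1}=\sum_{j\ge -v}y_jp^j$, uniqueness of $p$-adic expansions gives
\[
u^{-1}=\sum_{j\ge -v} y_jp^{j+v}=\sum_{i\ge 0} y_{i-v}p^{i}.
\]
So the two finite sums in the statement are exactly the truncations modulo $p^{k+1}$ of $u^{-1}$ and of $u$. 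Concretely, set
\[
U_k=\sum_{i=0}^{k}x_{v+i}p^i,\qquad U'_k=\sum_{i=0}^{k}y_{-v+i}p^i .
\]
Then $u\equiv U_k$ and $u^{-1}\equiv U'_k \pmod{p^{k+1}\mathbb Z_p}$, because the remaining tails are multiples of $p^{k+1}$ in $\mathbb Z_p$.

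Next I multiply these. Since $p^{k+1}\mathbb Z_p$ is an ideal, we get
\[
U'_kU_k\equiv u^{-1}u=1\pmod{p^{k+1}\mathbb Z_p}.
\]
Here $U'_kU_k-1$ is an ordinary integer lying in $p^{k+1}\mathbb Z_p$. An integer lies in $p^{k+1}\mathbb Z_p$ if and only if it is divisible by $p^{k+1}$ in $\mathbb Z$, i.e.\ $p^{k+1}\mathbb Z_p\cap\mathbb Z=p^{k+1}\mathbb Z$, which follows from $|\cdot|_p$. This yields the congruence in $\mathbb Z$, as required.

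The only point needing care is identifying the digits of $u^{-1}$ with the shifted digits $y_{j}$ of $x^{-1}$. Multiplying a $p$-adic expansion by $p^{v}$ simply shifts indices, and expansions are unique, so this identification is immediate. The rest is bookkeeping of indices; I do not expect a real obstacle.
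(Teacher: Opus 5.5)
Your proof is correct. Writing $x=p^v u$, you identify the two finite sums as the truncations of $u$ and $u^{-1}=p^v x^{-1}$ modulo $p^{k+1}\mathbb{Z}_p$, multiply, and use $p^{k+1}\mathbb{Z}_p\cap\mathbb{Z}=p^{k+1}\mathbb{Z}$; that gives exactly the stated integer congruence. The paper itself gives no argument and only refers to Lemma~3.4 of \cite{Lu22}, so your proof is a self-contained substitute for that reference, along the natural route of reducing $x\cdot x^{-1}=1$ modulo $p^{k+1}$.
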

    
    \begin{proof}
    	For the proof, see \cite{Lu22}*{Lemma~3.4}
    \end{proof}

%%%%%%%%%%%%%%%%%%%%%%%%%%%%%%%%%%%%%%%%%%%%%%%%%%%%%%%%%%%%%%%%%%%%%%%%%%%%%%%%%%%%%%%%%%%%%%%%%%%%%%%%%%%%%%%%%%%%%%%%%%%%%%%%%%%%%%%%%%%%%%%%%%%%%%%%%%%%%%%%%%%%%%%%%%%%%%%%%%%%%%%%%%%%%%%%%%%%%%%%%%%%%%%%%%%%%%%%%%%%%%%%%%%%%%%%%%%%%%%%%%%%%%%%%%%%%%%%%%%%%%%%%%%%%%%%%%%%%%%%%%%%%%%%%%%%%%%%%%%%%%%%%%%%%%%%%%%%%%%%%%%%%%%%%%%%%%%%%%%%%%%%%%%%%%%%%%%%%%%%%%%%%%%%%%%%%%%%%%%%%%%%%%%%%%%%%%%%%%%%%%%%%%%%%%%%%%%%%%%%%%%%%%%%%%%%%%%%%%%%%%%%%%%%%%%%%%%%%%%%%%%%%%%%%%%%%%%%%%%%%%%%%%%%%%%%%%%%%%%%%%%%%%%%%%%%%%%%%%%%%%%%%%%%

\section{The \texorpdfstring{$\rm{SL}_2\left(\Z[1/p]\right)$}{SL2(Z[1p])}-action and crossed products of noncommutative solenoids}\label{crossed product}
        In this section we study natural group actions on noncommutative solenoids arising from the linear action of \(\mathrm{SL}_2(\mathbb Z[1/p])\) on 
        \(\mathbb Z[1/p]^2\). We first replace the standard multiplier by a cohomologous skew-symmetric multiplier, for a distinguished class of parameters, so that the invariance under the linear action becomes transparent. This allows us to define crossed products of noncommutative solenoids by subgroups of \(\mathrm{SL}_2(\mathbb Z[1/p])\). We then compare this action, in the case of \(\mathrm{SL}_2(\mathbb Z)\), with the inductive-limit action obtained from the Watatani--Brenken actions on the finite-dimensional noncommutative tori appearing in the inductive-limit decomposition of the solenoid.

\subsection{A cohomologous skew multiplier for noncommutative solenoids}
    We now single out a distinguished subclass of noncommutative solenoids that will play an important role in the subsequent discussion. We introduce the following subset of $\Om_p$:
    \begin{dfn}
    \[
    \Om_p^{\rm{even}}:=
    \Bigl\{
    (\af_n)_{n\in\N}\in \R^{\N}
    : \forall~n~\exists ~x_n\in 2\Z \text{ \emph{such that} }
    p\af_{n+1}=\af_n+x_n
    \Bigr\}.
    \]
    \end{dfn}
    
    \noindent Thus, \(\Om_p^{\rm{even}}\) consists of those sequences in \(\Om_p\) whose defining digits \(x_n\) can all be chosen even. We note that this class contains a large family of examples, as described below.
    
    \begin{exa}
    Let $\af_0\in[0,1)$. Then
    \[
    \left(\af_0,\frac{\af_0}{p},\frac{\af_0}{p^2},\dots\right)\in \Om^{\rm{even}}_p.
    \]
    Indeed, this corresponds to the choice $x_n=0$ for all $n\in\N$.
    \end{exa}
    
    \begin{exa}
    Let $\af_0\in[0,1)$. Choose an even integer $x_0\in \{0,1,2,\dots,p-1\},$
    and define
    \[
    \af_1=\frac{\af_0+x_0}{p}.
    \]
    Proceeding inductively, once $\af_n$ has been defined, choose an even integer $x_n\in\{0,1,2,\dots,p-1\},$
    and set
    \[
    \af_{n+1}=\frac{\af_n+x_n}{p}.
    \]
    Then the resulting sequence $(\af_n)_{n\in\N}$ belongs to $\Om^{\rm{even}}_p$. The previous example is recovered as the special case in which $x_n=0$ for all $n\in\N$.
    \end{exa}
    
    For $\af\in\Omega_p^{\rm{even}}$, we now introduce an alternative $\mathbb{T}$-valued multiplier on $\Gm_p=\Z[1/p]\times \Z[1/p].$ More precisely, define a function $
    \omega_{\af}:\Gm_p\times\Gm_p\to \T$
    by
    \[
    \omega_{\af}\!\left(
    \left(\frac{j_1}{p^{k_1}},\frac{j_2}{p^{k_2}}\right),
    \left(\frac{j_3}{p^{k_3}},\frac{j_4}{p^{k_4}}\right)
    \right)
    =
    e^{\left(\pi i\bigl(\af_{k_1+k_4}j_1j_4-\af_{k_2+k_3}j_2j_3\bigr)\right)}.
    \]
    In the next lemma, we show that this multiplier is cohomologous to the standard multiplier $\Psi_\af$. Consequently, it determines the same twisted group $\mathrm{C}^*$-algebra up to $*$-isomorphism.
    
    \begin{lem}
    For every $\af\in\Omega_p^{\rm{even}}$, the multipliers $\Psi_{\af}$ and $\omega_{\af}$ are cohomologous. In particular,
    \[
    C^*\!\left(\Gm_p,\Psi_{\af}\right)
    \cong
    C^*\!\left(\Gm_p,\omega_{\af}\right).
    \]
    \end{lem}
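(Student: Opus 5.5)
We propose to exhibit an explicit coboundary. Write elements of $\Gm_p$ in reduced form $s=(j_1/p^{k_1},j_2/p^{k_2})$ and define
\[
\rho(s):=e^{\pi i\,\af_{k_1+k_2}\,j_1j_2}.
\]
The first step is to check that $\rho$ is well defined independently of the chosen representation. If we write $j_1/p^{k_1}=(pj_1)/p^{k_1+1}$, the exponent changes from $\af_{k_1+k_2}j_1j_2$ to $\af_{k_1+k_2+1}pj_1j_2$. Since $p\af_{n+1}=\af_n+x_n$ with $x_n$ even, the difference is $x_{k_1+k_2}j_1j_2\in 2\Z$, so $e^{\pi i(\cdot)}$ is unchanged. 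This is exactly where the evenness hypothesis defining $\Om_p^{\rm even}$ enters. The same computation shows that $\omega_\af$ and $\Psi_\af$ are themselves independent of representations, and that every expression $e^{\pi i\af_{k+l}jm}$ for $j/p^k,\,m/p^l\in\Z[1/p]$ depends only on the rational numbers. In other words, the pairing
\[
(a,b)\mapsto e^{\pi i\,\af\langle a b\rangle},\qquad e^{\pi i\af\langle ab\rangle}:=e^{\pi i\af_{k+l}jm}\ \text{ for } a=j/p^k,\ b=m/p^l,
\]
is a well-defined function on $\Z[1/p]\times\Z[1/p]$, symmetric in $a,b$. Moreover, bringing everything to a common denominator shows that it is ``bi-additive'' in the sense that $e^{\pi i\af\langle(a+a')b\rangle}=e^{\pi i\af\langle ab\rangle}e^{\pi i\af\langle a'b\rangle}$. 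Establishing this bi-character property cleanly is the main technical point. The plan is to lift all entries to a common large denominator $p^N$, where the pairing becomes $e^{\pi i\af_{2N}\,\cdot\,(\text{integer product})}$, which is manifestly bi-additive in the integer numerators.

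With this notation, $\Psi_\af(s,t)=\chi(s_1,t_2)^2$ and $\omega_\af(s,t)=\chi(s_1,t_2)\overline{\chi(s_2,t_1)}$, where $\chi(a,b)=e^{\pi i\af\langle ab\rangle}$. Also $\rho(s)=\chi(s_1,s_2)$. Expanding by bi-additivity,
\[
\rho(s)\rho(t)\overline{\rho(s+t)}=\overline{\chi(s_1,t_2)}\,\overline{\chi(t_1,s_2)}.
\]
Multiplying by $\Psi_\af(s,t)=\chi(s_1,t_2)^2$ gives $\chi(s_1,t_2)\overline{\chi(s_2,t_1)}=\omega_\af(s,t)$, using symmetry $\chi(t_1,s_2)=\chi(s_2,t_1)$. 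Hence $\omega_\af(s,t)=\rho(s)\rho(t)\overline{\rho(s+t)}\Psi_\af(s,t)$, which is relation~(\ref{coho}).

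As a byproduct, this shows that $\omega_\af$ is a multiplier: it is a coboundary times the multiplier $\Psi_\af$, and the normalisation $\omega_\af(x,0)=1=\omega_\af(0,x)$ is immediate. Proposition~\ref{prop:cohom} then yields $C^*(\Gm_p,\Psi_\af)\cong C^*(\Gm_p,\omega_\af)$.

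We expect the main obstacle to be the bookkeeping in the well-definedness and bi-additivity step, namely controlling the parity of the integer corrections $x_n j m$ when passing between denominators. Everything else is formal algebra with the character $\chi$.
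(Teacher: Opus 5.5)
Your proposal is correct and is essentially the paper's proof: the paper uses the coboundary $\beta(x)=e^{-\pi i\alpha_{k_1+k_2}j_1j_2}=\overline{\rho(x)}$ and verifies $\beta(x)\beta(y)\overline{\beta(x+y)}\,\omega_\alpha(x,y)=\Psi_\alpha(x,y)$ by expanding exponents with the congruences $p^{r}\alpha_{n+r}\equiv\alpha_n \pmod{2\mathbb Z}$, which is exactly your computation. Your symmetric bi-additive pairing $\chi$ is a cleaner way to organise that same step. It also makes explicit two things the paper leaves tacit: the independence of the chosen fraction representation (used when the paper writes $x+y$ over the non-reduced denominators $p^{k_1+k_3}$, $p^{k_2+k_4}$), and the fact that $\omega_\alpha$ really is a multiplier.
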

    
    \begin{proof}
    Let
    $x=\left(\frac{j_1}{p^{k_1}},\frac{j_2}{p^{k_2}}\right)$ {and}
    $y=\left(\frac{j_3}{p^{k_3}},\frac{j_4}{p^{k_4}}\right)
    $ be elements of $\Gm_p.$
    We define a function
    $
    \bt:\Gm_p\to\T
    $
    by
    \[
    \bt\left(\frac{j_1}{p^{k_1}},\frac{j_2}{p^{k_2}}\right)
    =
    e^{\left(-\pi i\,\af_{k_1+k_2}j_1j_2\right)}
    \]
     and claim that
    \[
    \bt(x)\bt(y)\bt(x+y)^{-1}\omega_{\af}(x,y)=\Psi_{\af}(x,y).
    \]
    Expanding LHS we get:
    \begin{align*}
    &\bt(x)\bt(y)\bt(x+y)^{-1}\omega_{\af}(x,y)\\
    &=
    e^{(-\pi i \af_{k_1+k_2}j_1j_2)}\,
    e^{(-\pi i \af_{k_3+k_4}j_3j_4)}\,
    e^{\bigl(\pi i(\af_{k_1+k_4}j_1j_4-\af_{k_2+k_3}j_2j_3)\bigr)}\\
    &\quad\times
    e^{\Bigl(\pi i\af_{k_1+k_2+k_3+k_4}
    \bigl(p^{k_3+k_4}j_1j_2+p^{k_2+k_3}j_1j_4+p^{k_1+k_4}j_2j_3+p^{k_1+k_2}j_3j_4\bigr)\Bigr)}.
    \end{align*}
    Now using the defining relation for $\af\in\Omega_p^{\rm{even}}$, one obtains
    \[
    p^{k_3+k_4}\af_{k_1+k_2+k_3+k_4}\equiv \af_{k_1+k_2}~\pmod{2\mathbb Z},
    \quad
    p^{k_2+k_3}\af_{k_1+k_2+k_3+k_4}\equiv\af_{k_1+k_4}~\pmod{2\mathbb Z},
    \]
    \[
    p^{k_1+k_4}\af_{k_1+k_2+k_3+k_4}\equiv\af_{k_2+k_3}~\pmod{2\mathbb Z},
    \quad
    p^{k_1+k_2}\af_{k_1+k_2+k_3+k_4}\equiv\af_{k_3+k_4}~\pmod{2\mathbb Z},
    \]
    and hence the above expression simplifies to
    \[
    \begin{aligned}
    &\bt(x)\bt(y)\bt(x+y)^{-1}\omega_{\af}(x,y)\\
    &=e^{(-\pi i \af_{k_1+k_2}j_1j_2)}\,
      e^{(-\pi i \af_{k_3+k_4}j_3j_4)}\,
      e^{\bigl(\pi i(\af_{k_1+k_4}j_1j_4-\af_{k_2+k_3}j_2j_3)\bigr)} \\
    &\quad\times
    e^{\bigl(\pi i(\af_{k_1+k_2}j_1j_2+\af_{k_2+k_3}j_2j_3+\af_{k_1+k_4}j_1j_4+\af_{k_3+k_4}j_3j_4)\bigr)} \\
    &=
    e^{(2\pi i\,\af_{k_1+k_4}j_1j_4)}
    =
    \Psi_{\af}(x,y).
    \end{aligned}
    \]
    Thus $\Psi_\af$ and $\omega_\af$ are cohomologous. The final assertion follows from Proposition~\ref{prop:cohom}.
    \end{proof}
    We also note that the inductive limit description of the noncommutative solenoid remains unchanged for $\af\in\Om_p^{\rm{even}}$ when one works with the multiplier $\omega_\af$. 
    \begin{ntn}
      For every \(\alpha\in\Omega_p^{\mathrm{even}}\), we denote by
    \[
    \mathscr A_{\alpha}^{\omega}:=C^*(\Gamma_p,\omega_\alpha),
    \] the noncommutative solenoid defined using the multiplier \(\omega_\alpha\).
    \end{ntn}
%%%%%%%%%%%%%%%%%%%%%%%%%%%%%%%%%%%%%%%%%%%%%%%%%%%%%%%%%%%%%%%%%%%%%%%%%%%%%%%%%%%%%%%%%%%%%%%%%%%%%%%%%%%%%%%%%%%%%%%%%%%%%%%%%%%%%%%%%%%%%%%%%%%%%%%%%%%%%%%%%%%%%%%%%%%%%%%%%%%%%%%%%%%%%%%%%%%%%%%%%%%%%%%%%%%%%%%%%%%%%%%%%%%%%%%%%%%%%%%%%%%%%%%%%%%%%%%%%%%%%%%%%%%%%%%%%%%%%%%%%%%%%
\subsection{The \texorpdfstring{$\rm{SL}_2(\Z[1/p])$}{SL2(Z[1/p])}-action on \texorpdfstring{$\A^{\om}_\af$}{Aalpha}}\label{action}

    Let $\rm{SL}_2\left(\Z[1/p]\right)$ be the group of
    $2 \times 2$ matrices with entries in $\Z\left[1/p\right]$ and determinant 1. In this subsection, we study the canonical action of $\rm{SL}_2\left(\Z[1/p]\right)$ on the discrete group $\Gm_p=\Z[1/p]^2$ and the induced action on the noncommutative solenoid $\A^{\om}_\af \cong C^*(\Gm_p,\omega_\af)$. We begin by recalling the natural action of $\rm{SL}_2\left(\Z[1/p]\right)$ on $\Z[1/p]^2$. Every matrix
    \[
        M=\begin{pmatrix}
        a & b\\
        c & d
    \end{pmatrix}\in\rm{SL}_2\left(\Z[1/p]\right)
    \]
    acts on $\Z\left[1/p\right]^2$ by matrix multiplication:
    \[
        M\cdot \left(\frac{j_1}{p^{k_1}},\frac{j_2}{p^{k_2}}\right)=\left(\frac{aj_1}{p^{k_1}}+\frac{bj_2}{p^{k_2}},\frac{cj_1}{p^{k_1}}+\frac{dj_2}{p^{k_2}}\right),
    \]
    for $\left(\frac{j_1}{p^{k_1}},\frac{j_2}{p^{k_2}}\right)\in \Z[1/p]^2.$
%%%%%%%%%%%%%%%%%%%%%%%%%%%%%%%%%%%%%%%%%%%%%%%%%%%%%%%%%%%%%%%%%%%%%%%%%%%%%%%%%%%%%%%%

    We now verify that the multiplier \(\omega_\alpha\) is invariant under the
    canonical action of \(\mathrm{SL}_2(\mathbb Z[1/p])\) on \(\Gamma_p\). Let
    \(
    x=\left(\frac{j_1}{p^{k_1}},\frac{j_2}{p^{k_2}}\right)
    \) and
    \(
    y=\left(\frac{j_3}{p^{k_3}},\frac{j_4}{p^{k_4}}\right)
    \)
    be elements of \(\Gamma_p\), and let
    \(
    M=
    \begin{pmatrix}
    a&b\\
    c&d
    \end{pmatrix}
    \in \mathrm{SL}_2(\mathbb Z[1/p]).
    \)
    Choose \(r\geq 0\) such that
    \(
    p^r a,\ p^r b,\ p^r c,\ p^r d\in \mathbb Z.
    \)
    Write
    \[
    A=p^r a,\qquad B=p^r b,\qquad C=p^r c,\qquad D=p^r d.
    \]
    Then
    \[
    AD-BC=p^{2r}(ad-bc)=p^{2r}.
    \]
    Now choose
    \(
    k=\max\{k_1,k_2,k_3,k_4\},
    \)
    and put
    \(
    J_i:=j_i p^{k-k_i} \text{ for } i=1,2,3,4.
    \)
    Then
    \[
    x=\left(\frac{J_1}{p^k},\frac{J_2}{p^k}\right),
    \qquad
    y=\left(\frac{J_3}{p^k},\frac{J_4}{p^k}\right).
    \]
    Since \(\alpha\in\Omega_p^{\mathrm{even}}\), we have
    \(
    p^{2k-(k_1+k_4)}\alpha_{2k}\equiv \alpha_{k_1+k_4}
    \pmod{2\mathbb Z},
    \)
    and
    \(
    p^{2k-(k_2+k_3)}\alpha_{2k}\equiv \alpha_{k_2+k_3}
    \pmod{2\mathbb Z}.
    \)
    Therefore
    \[
    \om_\alpha(x,y)
    =
    e^{\left(\pi i\alpha_{2k}(J_1J_4-J_2J_3)\right)}.
    \]
    
    Next, using the choice of \(r\), we may write
    \[
    Mx=
    \left(
    \frac{AJ_1+BJ_2}{p^{k+r}},
    \frac{CJ_1+DJ_2}{p^{k+r}}
    \right)\quad\text{ and }\quad My=
    \left(
    \frac{AJ_3+BJ_4}{p^{k+r}},
    \frac{CJ_3+DJ_4}{p^{k+r}}
    \right).
    \]
    Hence
    \[
    \begin{aligned}
    \omega_\alpha(Mx,My)
    &=
    e^{\left(\pi i\alpha_{2(k+r)}
    \left((AJ_1+BJ_2)(CJ_3+DJ_4)
    -(CJ_1+DJ_2)(AJ_3+BJ_4)\right)\right)}.
    \end{aligned}
    \]
    Expanding the expression inside the brackets gives
    \[
    (AJ_1+BJ_2)(CJ_3+DJ_4)
    -(CJ_1+DJ_2)(AJ_3+BJ_4)
    =
    (AD-BC)(J_1J_4-J_2J_3).
    \]
    Since \(AD-BC=p^{2r}\), we obtain
    \[
    \omega_\alpha(Mx,My)
    =
    e^{\left(\pi i\,p^{2r}\alpha_{2(k+r)}(J_1J_4-J_2J_3)\right)}.
    \]
    Again using \(\alpha\in\Omega_p^{\mathrm{even}}\), we have
    \(
    p^{2r}\alpha_{2(k+r)}\equiv \alpha_{2k}\pmod{2\mathbb Z}.
    \)
    Therefore
    \[
    \om_\alpha(Mx,My) = e^{\left(\pi i\alpha_{2k}(J_1J_4-J_2J_3)\right)} = \omega_\alpha(x,y).
    \]
    Thus \(\omega_\alpha\) is invariant under the canonical action of
    \(\mathrm{SL}_2(\mathbb Z[1/p])\) on \(\Gamma_p\).  Consequently, for every
    \(M\in \mathrm{SL}_2(\mathbb Z[1/p])\), the action of
    \(\mathrm{SL}_2(\mathbb Z[1/p])\) on \(\Gamma_p\) induces an automorphism $\lambda_M^{\alpha}:C^*(\Gamma_p,\omega_{\alpha})
    \longrightarrow
    C^*(\Gamma_p,\omega_{\alpha})$ (see \cite{ELPW10}*{Lemma~2.1}). On the dense subalgebra \(\ell^1(\Gamma_p,\omega_{\alpha})\), this automorphism is given by
    \[
        \ld_M^{\af}(f)\left(\frac{j_1}{p^{k_1}},\frac{j_2}{p^{k_2}}\right)=f \left(M^{-1}\cdot \left(\frac{j_1}{p^{k_1}},\frac{j_2}{p^{k_2}}\right)\right).
    \]
    Equivalently, if \(W_x\) denotes the canonical unitary corresponding to
    \(x\in\Gamma_p\), then
    \[
        \lambda_M^{\alpha}(W_x)=W_{Mx}.
    \]
    In particular, if 
    $M=
        \begin{pmatrix}
            a & b\\
            c & d
        \end{pmatrix}
        \in \mathrm{SL}_2(\mathbb Z[1/p]),$
    then, for each \(k\in\mathbb N\),
    \[
        \lambda_M^{\alpha}\left(W_{\left(\frac1{p^k},0\right)}\right)
        =
        W_{\left(\frac{a}{p^k},\frac{c}{p^k}\right)},\quad
        \lambda_M^{\alpha}\left(W_{\left(0,\frac1{p^k}\right)}\right)
        =
        W_{\left(\frac{b}{p^k},\frac{d}{p^k}\right)}.
    \]
    
    If, in addition, \(M\in \mathrm{SL}_2(\mathbb Z)\), then
    \(a,b,c,d\in\mathbb Z\), and the above formulas may be written in terms of
    integer powers of the standard generators. Namely,
    \[
    \begin{aligned}
        \lambda_M^{\alpha}\left(W_{\left(\frac1{p^k},0\right)}\right)
        &=
        W_{\left(\frac{a}{p^k},\frac{c}{p^k}\right)} =
        \overline{
        \omega_{\alpha}
        \left(
            \left(\frac{a}{p^k},0\right),
            \left(0,\frac{c}{p^k}\right)
        \right)}
        W_{\left(\frac{a}{p^k},0\right)}
        W_{\left(0,\frac{c}{p^k}\right)}  \\
        &=
        e^{-\pi iac\alpha_{2k}}
        \left(W_{\left(\frac1{p^k},0\right)}\right)^a
        \left(W_{\left(0,\frac1{p^k}\right)}\right)^c .
    \end{aligned}
    \]
    Similarly,
    \[
        \lambda_M^{\alpha}\left(W_{\left(0,\frac1{p^k}\right)}\right)
        =
        e^{-\pi ibd\alpha_{2k}}
        \left(W_{\left(\frac1{p^k},0\right)}\right)^b
        \left(W_{\left(0,\frac1{p^k}\right)}\right)^d .
    \]
    
    Thus we have obtained an action of \(\mathrm{SL}_2(\mathbb Z[1/p])\) on
    \(A_\alpha^\omega\). Hence, for every subgroup \(H\subseteq \mathrm{SL}_2(\mathbb Z[1/p])\), we may form the crossed product \(A_\alpha^\omega\rtimes H\).  For the basic theory of crossed products, we refer the reader to the book \cite{Wil07}. We end this section by describing the crossed product $\A^{\om}_{\af}\rtimes H$, as a twisted group $\rm{C}^*$-algebra. More precisely, we realise it as $C^*(\Gm_p\rtimes H, \widetilde{\om}_{\af})$, where $H \subseteq \rm{SL}_2\left(\Z[1/p]\right)$ acts on $\Gm_p$ via matrix multiplication and $\widetilde{\om}_{\af}$ is a suitable extension of $\om_{\af}$ to $\Gm_p \rtimes H$.
    
    \begin{prp}\cite{ELPW10}*{Lemma~2.1, Corollary~2.2}
        Let $H$ be a subgroup of $\rm{SL}_2\left(\Z[1/p]\right).$   Define a multiplier on $\Gm_p\rtimes H$ by 
        \[
            \widetilde{\om}_{\af}\left(\left(\left(\frac{j_1}{p^{k_1}},\frac{j_2}{p^{k_2}}\right),M\right),\left(\left(\frac{j_3}{p^{k_3}},\frac{j_4}{p^{k_4}}\right),M'\right)\right)=\om_{\af}\left(\left(\frac{j_1}{p^{k_1}},\frac{j_2}{p^{k_2}}\right), M \cdot \left(\frac{j_3}{p^{k_3}},\frac{j_4}{p^{k_4}}\right)\right)
        \]
        for all $\left(\frac{j_1}{p^{k_1}},\frac{j_2}{p^{k_2}}\right), \left(\frac{j_3}{p^{k_3}},\frac{j_4}{p^{k_4}}\right)\in\Gm_p$ and $M,M'\in\rm{SL}_2\left(\Z[1/p]\right).$ Then there is a canonical isomorphism
        \[
        \A^{\om}_{\af}\rtimes H = C^*(\Gm_p,\om_{\af})\rtimes H \cong C^*\left(\Gm_p\rtimes H,\widetilde{\om}_{\af}\right).
        \]
    \end{prp}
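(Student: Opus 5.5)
The plan is the standard argument of \cite{ELPW10}. I would first check that $\widetilde{\om}_{\af}$ is a multiplier on $\Gm_p\rtimes H$, and then build mutually inverse $*$-homomorphisms between the two algebras from their universal properties. The group law is $(x,M)(y,M')=(x+My,MM')$.

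For the cocycle identity, the left side $\widetilde{\om}((x,M),(y,M'))\,\widetilde{\om}((x+My,MM'),(z,M''))$ equals
\[
\om_\af(x,My)\,\om_\af(x+My,MM'z).
\]
The right side $\widetilde{\om}((x,M),(y+M'z,M'M''))\,\widetilde{\om}((y,M'),(z,M''))$ equals
\[
\om_\af(x,My+MM'z)\,\om_\af(y,M'z).
\]
By the invariance of $\om_\af$ under $\mathrm{SL}_2(\Z[1/p])$ established above, $\om_\af(y,M'z)=\om_\af(My,MM'z)$. After this substitution, the identity is exactly the cocycle identity for $\om_\af$ applied to the triple $x, My, MM'z$. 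Normalisation is immediate.

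Next, let $(\pi,U)$ be a covariant representation of $(C^*(\Gm_p,\om_\af),H,\ld^\af)$. Write $V(x)=\pi(W_x)$, so that $V$ is an $\om_\af$-representation and $U_MV(x)U_M^*=V(Mx)$. Define $\widetilde V(x,M):=V(x)U_M$. Then
\[
\widetilde V(x,M)\widetilde V(y,M')=V(x)V(My)U_{MM'}=\om_\af(x,My)\,V(x+My)U_{MM'}=\widetilde{\om}_\af\bigl((x,M),(y,M')\bigr)\,\widetilde V\bigl((x,M)(y,M')\bigr),
\]
so $\widetilde V$ is a $\widetilde{\om}_\af$-representation. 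Conversely, given a $\widetilde{\om}_\af$-representation $\widetilde V$, set $V(x)=\widetilde V(x,I)$ and $U_M=\widetilde V(0,M)$. Since $\widetilde{\om}_\af$ is trivial whenever one argument has zero $\Gm_p$-component, $U$ is a unitary representation of $H$. The computation above read backwards gives covariance, namely $U_MV(x)=\widetilde V(Mx,M)=V(Mx)U_M$. These two assignments are mutually inverse bijections between the classes of representations.

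Since both $\Gm_p$ and $H$ are discrete, $C^*(\Gm_p\rtimes H,\widetilde{\om}_\af)$ is universal for $\widetilde{\om}_\af$-representations, and the crossed product is universal for covariant pairs. By the previous paragraph, the maps $W_xu_M\mapsto\delta_{(x,M)}$ and $\delta_{(x,M)}\mapsto W_xu_M$ are well defined on dense subalgebras, where $u_M$ are the implementing unitaries. They extend to mutually inverse $*$-homomorphisms, which gives the canonical isomorphism.

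The step I expect to be the main obstacle is the norm identification. The paper defines twisted group algebras through the regular representation, so I must make sure the completion in use is the full one. Here $\Gm_p\rtimes H$ is amenable whenever $H$ is cyclic, and in that case full and reduced completions coincide. For general $H$ I would use full completions on both sides, as \cite{ELPW10} does.
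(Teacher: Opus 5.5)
Your proposal is correct and is essentially the argument behind \cite{ELPW10}*{Lemma~2.1}, which the paper cites without giving a proof of its own. The two key steps both match that argument: the cocycle identity for $\widetilde{\omega}_\alpha$ reduces, via $\mathrm{SL}_2(\mathbb{Z}[1/p])$-invariance of $\omega_\alpha$, to the cocycle identity for $\omega_\alpha$ at $(x,My,MM'z)$, and covariant pairs correspond bijectively to $\widetilde{\omega}_\alpha$-representations through $\widetilde V(x,M)=V(x)U_M$. Your caveat about completions is the right one: with the paper's regular-representation definition you need amenability of $\Gamma_p$ (automatic, since it is abelian) and of $\Gamma_p\rtimes H$ (true for the cyclic $H$ used later); otherwise full completions should be taken on both sides.
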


%%%%%%%%%%%%%%%%%%%%%%%%%%%%%%%%%%%%%%%%%%%%%%%%%%%%%%%%%%%%%%%%%%%%%%%%%%%%%%%%%%%%%%%%%%%%%%%%%%%%%%%%%%%%%%%%%%%%%%%%%%%%%%%%%%%%%%%%%%%%%%%%%%%%%%%%%%%%%%%%%%%%%%%%%%%%%%%%%%%%%%%%%%%%%%%%%%%%%%%%%%%%%%%%%%%%%%%%%%%%%%%%%%%%%%%%%%%%%%%%%%%%%%%%%%%%%%%%%%%%%%%%

\subsection{The \texorpdfstring{$\rm{SL}_2(\Z)$}{SL2(Z)}-action and the inductive limit description of \texorpdfstring{$\A^{\om}_\af$}{Aalpha}} \label{inductive}

    Recall that $A_{\te}$ is the universal $\rm{C}^*$-algebra generated by the unitaries $V_{\te}$ and $U_{\te}$ satisfying 
    $V_{\te}U_{\te}=e^{2\pi i \te}U_{\te}V_{\te}$. Watatani \cite{Wat} and Brenken \cite{Bre84} introduced an action of $\mathrm{SL}_2(\mathbb{Z})$ on $A_\theta.$ More precisely, for each matrix $M=\begin{pmatrix}
            a & b\\
            c & d
        \end{pmatrix}\in\rm{SL}_2(\Z),$
    define an automorphism $\pi_M: A_{\te}\to A_{\te}$ by
    \[
    \pi_{M}(V):=e^{-\pi i ac\te} V^a U^c,
    \qquad
    \pi_{M}(U):=e^{-\pi i bd\te}V^b U^d.
    \]
    The condition $\det(M)=1$ ensures that the images of $V_{\theta}$ and
    $U_{\theta}$ satisfy the same commutation relation as the original generators.
    Moreover, the scalar factors are chosen so that the assignment
    \[
    \mathrm{SL}_2(\Z) \ni M \longmapsto \pi_M \in \Aut(A_{\te})
    \]
    defines a group homomorphism from $\mathrm{SL}_2(\mathbb{Z})$ into
    $\Aut(A_{\te})$. Since a noncommutative solenoid $\mathscr{A}^{\om}_{\af}$ can be realised as the inductive limit of noncommutative tori $A_{\af_{2n}}$, it is natural to ask whether the Watatani-Brenken actions at the torus levels induce an action of $\mathrm{SL}_2(\mathbb{Z})$ on $\mathscr{A}^{\om}_{\af}$.
    
    We begin with a standard inductive-limit principle. A compatible family of
    group actions on the building blocks of an inductive system induces a unique
    action on the inductive limit.
    \begin{lem}
    Let $(A_n,\varphi_n)$ be an inductive system of $\mathrm{C}^*$-algebras, and let $    A=\varinjlim (A_n,\varphi_n)$ with canonical maps $\varphi_{n,\infty}:A_n\to A$. Let $G$ be a group. Suppose that, for each $n$, there is an action $\af^{(n)}:G\to \operatorname{Aut}(A_n)$ such that the connecting maps are $G$-equivariant, that is,
    \[
    \varphi_n\circ \af^{(n)}_g
    =
    \af^{(n+1)}_g\circ \varphi_n
    \]
    for every $g\in G$ and every $n\in\mathbb{N}$. Then there exists a unique action $\af:G\to \operatorname{Aut}(A)$ such that
    \[
    \af_g\circ \varphi_{n,\infty}
    =
    \varphi_{n,\infty}\circ \af^{(n)}_g
    \]
    for every $g\in G$ and every $n\in\mathbb{N}$. The action $\af$ is called the inductive limit action on $A$.
    \end{lem}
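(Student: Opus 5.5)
The plan is to build $\af$ directly from the universal property of the inductive limit, applied to the compatible family $\af^{(n)}_g$. For each $g\in G$, the maps
\[
\psi^{(g)}_n:=\varphi_{n,\infty}\circ\af^{(n)}_g:A_n\to A
\]
form a compatible cone. Indeed, $\varphi_{n+1,\infty}\circ\varphi_n=\varphi_{n,\infty}$ and the equivariance hypothesis give
\[
\psi^{(g)}_{n+1}\circ\varphi_n=\varphi_{n+1,\infty}\circ\af^{(n+1)}_g\circ\varphi_n=\varphi_{n+1,\infty}\circ\varphi_n\circ\af^{(n)}_g=\psi^{(g)}_n .
\]
By the universal property of $\varinjlim(A_n,\varphi_n)$, there is a unique $*$-homomorphism $\af_g:A\to A$ with $\af_g\circ\varphi_{n,\infty}=\varphi_{n,\infty}\circ\af^{(n)}_g$ for all $n$.

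Next I will check that $g\mapsto\af_g$ is a homomorphism into $\Aut(A)$. The maps $\af_g\circ\af_h$ and $\af_{gh}$ agree after composing with every $\varphi_{n,\infty}$, since $\af^{(n)}_g\af^{(n)}_h=\af^{(n)}_{gh}$. The union $\bigcup_n\varphi_{n,\infty}(A_n)$ is dense in $A$ and both maps are continuous, so $\af_g\circ\af_h=\af_{gh}$. The same argument gives $\af_e=\mathrm{id}_A$. Hence $\af_g\circ\af_{g^{-1}}=\af_{g^{-1}}\circ\af_g=\mathrm{id}_A$, so each $\af_g$ is a $*$-automorphism. Alternatively, the uniqueness clause of the universal property can replace the density argument in each of these steps.

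Uniqueness of $\af$ follows for the same reason. Any action satisfying the intertwining relation is prescribed on the dense $*$-subalgebra $\bigcup_n\varphi_{n,\infty}(A_n)$, and $*$-homomorphisms are contractive, hence continuous, so it is determined on all of $A$.

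I do not expect a real obstacle; the argument is routine. The one point needing care is that $\af_g$ is well defined on the algebraic union before completion. An element $\varphi_{n,\infty}(a)=\varphi_{m,\infty}(b)$ may have several representatives, and the formula must not depend on the choice. This is exactly what the universal property handles, so I will invoke it rather than define $\af_g$ by hand.
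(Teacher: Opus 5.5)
Your proposal is correct and follows essentially the same route as the paper. You define $\af_g$ through the universal property applied to the compatible family $\varphi_{n,\infty}\circ\af^{(n)}_g$, and you use density of $\bigcup_n\varphi_{n,\infty}(A_n)$ for the group law, invertibility and uniqueness. The only cosmetic difference is that you obtain invertibility from $\af_e=\mathrm{id}_A$ and the group law, whereas the paper checks $\af_g\af_{g^{-1}}=\mathrm{id}_A$ directly before verifying the group law.
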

    
    \begin{proof}
    Fix $g\in G$. For each $n$, consider the $*$-homomorphism
    \[
    \varphi_{n,\infty}\circ \af^{(n)}_g:A_n\to A.
    \]
    These maps are compatible with the inductive system. Indeed, using the equivariance of the connecting maps, we have
    \[
    \varphi_{n+1,\infty}\circ \af^{(n+1)}_g\circ \varphi_n
    =
    \varphi_{n+1,\infty}\circ \varphi_n\circ \af^{(n)}_g
    =
    \varphi_{n,\infty}\circ \af^{(n)}_g.
    \]
    Hence, by the universal property of the inductive limit, there exists a unique $*$-homomorphism $\af_g: A\to A$ such that $\af_g\circ \varphi_{n,\infty}=\varphi_{n,\infty}\circ \af^{(n)}_g$
    for every $n$.
    
    We now show that $\af_g$ is an automorphism. Applying the same construction to $g^{-1}$ gives a $*$-homomorphism $\af_{g^{-1}}:A\to A$. For each $n$,
    \[
    \af_g\af_{g^{-1}}\circ \varphi_{n,\infty}
    =
    \af_g\circ \varphi_{n,\infty}\circ \af^{(n)}_{g^{-1}}
    =
    \varphi_{n,\infty}\circ \af^{(n)}_g\circ \af^{(n)}_{g^{-1}}
    =
    \varphi_{n,\infty}.
    \]
    Thus $\af_g\af_{g^{-1}}$ agrees with the identity map on each
    $\varphi_{n,\infty}(A_n)$. Since the union $\bigcup_n \varphi_{n,\infty}(A_n)$ is dense in $A$, we get $\af_g\af_{g^{-1}}=\operatorname{id}_A.$ Similarly, $\af_{g^{-1}}\af_g=\operatorname{id}_A.$ Therefore $\af_g$ is an automorphism of $A$.
    
    It remains to check the group law. Let $g,h\in G$. For each $n$,
    \[
    \af_g\af_h\circ \varphi_{n,\infty}
    =
    \af_g\circ \varphi_{n,\infty}\circ \af^{(n)}_h
    =
    \varphi_{n,\infty}\circ \af^{(n)}_g\circ \af^{(n)}_h
    =
    \varphi_{n,\infty}\circ \af^{(n)}_{gh}.
    \]
    On the other hand,
    \[
    \af_{gh}\circ \varphi_{n,\infty}
    =
    \varphi_{n,\infty}\circ \af^{(n)}_{gh}.
    \]
    Hence $\af_g\af_h$ and $\af_{gh}$ agree on each
    $\varphi_{n,\infty}(A_n)$, and therefore agree on all of $A$. Thus
    \[
    \af_g\af_h=\af_{gh}.
    \]
   Thus $g\mapsto \af_g$ defines an action of $G$ on $A$.
    
    The uniqueness follows from the same density argument: any action satisfying
    \[
    \af_g\circ \varphi_{n,\infty}
    =
    \varphi_{n,\infty}\circ \af^{(n)}_g
    \]
    is uniquely determined on the dense subalgebra
    $\bigcup_n \varphi_{n,\infty}(A_n)$, and hence on $A$.
    \end{proof}

    Recall now that, for $\af\in \Omega_p^{\rm{even}}$ the noncommutative solenoid
    $\mathscr{A}^{\om}_{\af}$ is realised as the inductive limit of the noncommutative
    tori $A_{\af_{2n}}$ with connecting maps $\varphi_n$ as in
    Equation~\eqref{connecting mor}. Hence each building block
    $A_{\af_{2n}}$ carries the Watatani--Brenken action of
    $\mathrm{SL}_2(\mathbb{Z})$. At the $n$-th level, for $M=
    \begin{pmatrix}
     a & b\\
     c & d\end{pmatrix}\in\mathrm{SL}_2(\mathbb Z),$ this action is given by
    \[
    \pi^n_M(V_{2n})
    =
    e^{-\pi i ac\af_{2n}}V_{2n}^{a}U_{2n}^{c},
    \qquad
    \pi^n_M(U_{2n})
    =
    e^{-\pi i bd\af_{2n}}V_{2n}^{b}U_{2n}^{d}.
    \]
    The next lemma shows that these torus-level actions are compatible with the connecting maps of the inductive system.
    
    \begin{lem}\label{compat of inductive}
     Let $\af=(\af_n)_{n\in\N}\in\Omega_p^{\rm{even}}$. Then for each $n\in\N$, the following diagram commutes:
        \[\begin{tikzcd}
        	\cdots && {A_{\alpha_{2n-2}}} && {A_{\alpha_{2n}}} && {A_{\alpha_{2n+2}}} && \cdots \\
        	\\
        	\cdots && {A_{\alpha_{2n-2}}} && {A_{\alpha_{2n}}} && {A_{\alpha_{2n+2}}} && \cdots
        	\arrow["{\varphi_{n-2}}", from=1-1, to=1-3]
        	\arrow["{\varphi_{n-1}}", from=1-3, to=1-5]
        	\arrow["{\pi^{n-1}_{M}}"', from=1-3, to=3-3]
        	\arrow["{\varphi_{n}}", from=1-5, to=1-7]
        	\arrow["{\pi^{n}_{M}}"', from=1-5, to=3-5]
        	\arrow["{\varphi_{n+1}}", from=1-7, to=1-9]
        	\arrow["{\pi^{n+1}_{M}}"', from=1-7, to=3-7]
        	\arrow["{\varphi_{n-2}}", from=3-1, to=3-3]
        	\arrow["{\varphi_{n-1}}", from=3-3, to=3-5]
        	\arrow["{\varphi_{n}}", from=3-5, to=3-7]
        	\arrow["{\varphi_{n+1}}", from=3-7, to=3-9]
        \end{tikzcd}\]
    \end{lem}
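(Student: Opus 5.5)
We need to show $\varphi_n\circ\pi^n_M=\pi^{n+1}_M\circ\varphi_n$ for each $n$. Both sides are $*$-homomorphisms $A_{\af_{2n}}\to A_{\af_{2n+2}}$, and $A_{\af_{2n}}$ is generated by $V_{2n}$ and $U_{2n}$. It therefore suffices to check equality on these two generators. The computation is symmetric in the two generators, so I will treat $V_{2n}$ in detail and indicate the changes for $U_{2n}$ (replace $(a,c)$ by $(b,d)$).

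\textbf{Right-hand side.} Using $\varphi_n(V_{2n})=V_{2n+2}^p$, we get
\[
\pi^{n+1}_M(\varphi_n(V_{2n}))=\bigl(\pi^{n+1}_M(V_{2n+2})\bigr)^p=e^{-\pi i p\,ac\,\af_{2n+2}}\bigl(V_{2n+2}^aU_{2n+2}^c\bigr)^p .
\]
To compare with the other side, I will reorder $(V^aU^c)^p$ into $V^{pa}U^{pc}$. Write $\te=\af_{2n+2}$, so that $VU=e^{2\pi i\te}UV$ and hence $U^cV^a=e^{-2\pi i ac\te}V^aU^c$. Moving all $V$'s to the left then produces a scalar factor
\[
e^{-2\pi i ac\te\,(1+2+\cdots+(p-1))}=e^{-\pi i ac\te\,p(p-1)}.
\]
Combining this with the prefactor, the right-hand side becomes
\[
e^{-\pi i\,ac\,p^2\af_{2n+2}}\,V_{2n+2}^{pa}U_{2n+2}^{pc}.
\]

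\textbf{Left-hand side.} Directly,
\[
\varphi_n(\pi^n_M(V_{2n}))=e^{-\pi i ac\af_{2n}}\,V_{2n+2}^{pa}U_{2n+2}^{pc}.
\]

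\textbf{Comparing the scalars.} This is the step where the hypothesis $\af\in\Om_p^{\rm even}$ enters, and it is the only real obstacle. Iterating the defining relation twice gives
\[
p^2\af_{2n+2}=\af_{2n}+x_{2n}+p\,x_{2n+1}.
\]
Since each $x_j$ is even, $p^2\af_{2n+2}\equiv\af_{2n}\pmod{2\Z}$. As $ac\in\Z$, it follows that $e^{-\pi i ac p^2\af_{2n+2}}=e^{-\pi i ac\af_{2n}}$, so the two sides agree on $V_{2n}$.

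Without evenness the difference would be a factor $e^{\pm\pi i\,ac\cdot\mathrm{odd}}=\pm1$, and the diagram could fail to commute. This is exactly why the lemma is stated for $\Om_p^{\rm even}$. The care needed is in the sign convention when reordering $(V^aU^c)^p$. A convenient sanity check is that the scalar must equal $\overline{\omega}$-type normalisations, matching $\ld_M^\af(W_{(1/p^n,0)})=e^{-\pi i ac\af_{2n}}W^aW^c$ computed earlier.

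The same argument with $(b,d)$ in place of $(a,c)$ handles $U_{2n}$, which completes the proof.
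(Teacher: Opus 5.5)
Your proof is correct and follows the paper's argument step for step. Like the paper, you check the identity on the generators $V_{2n},U_{2n}$, reorder $(V_{2n+2}^aU_{2n+2}^c)^p$ to pick up the factor $e^{-\pi i(p^2-p)ac\af_{2n+2}}$, and then use $p^2\af_{2n+2}\equiv\af_{2n}\pmod{2\Z}$, which comes from evenness of the digits via $p^2\af_{2n+2}=\af_{2n}+x_{2n}+p\,x_{2n+1}$; you make this last identity explicit, whereas the paper leaves it implicit.
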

    
    \begin{proof} 
        We only check the following square commutes:
            \[\begin{tikzcd}
                {A_{\af_{2n}}} && {A_{\af_{2n+2}}} \\
                \\
                {A_{\af_{2n}}} && {A_{\af_{2n+2}}}
                \arrow["{\varphi_n}", from=1-1, to=1-3]
                \arrow["{\pi_A^n}"', from=1-1, to=3-1]
                \arrow["{\pi_A^{n+1}}", from=1-3, to=3-3]
                \arrow["{\varphi_n}"', from=3-1, to=3-3]
        \end{tikzcd}\]
        It is enough to verify the claim on the generators $V_{2n}$ and $U_{2n}$. First, we note that
        \begin{equation}\label{com action1}
            \varphi_n\circ\pi_M^n(V_{2n})=\varphi_n(e^{-\pi i ac\af_{2n}}~V_{{2n}}^a U_{{2n}}^c)=e^{-\pi i ac\af_{2n}}~V_{{2n+2}}^{pa} U_{{2n+2}}^{pc}.
        \end{equation}
        On the other hand, we have
        \begin{equation}\label{com action2}
            \pi_M^{n+1}\circ\varphi_n(V_{2n})= \pi_M^{n+1}(V_{2n+2}^p)=e^{-\pi i pac\af_{2n+2}}~(V_{{2n+2}}^a U_{{2n+2}}^c)^p.
        \end{equation}
        For any integer $r,s$, we note that
        \[
        V^r_{2n}U^s_{2n}=e^{2\pi i rs\af_{2n}}U^s_{2n}V^r_{2n}.
        \] Using this relation, we have
        \begin{align*}
            &(V_{{2n+2}}^a U_{{2n+2}}^c)^p=V^a_{2n+2}\underbrace{U^c_{2n+2} V^a_{2n+2}}\underbrace{U^c_{2n+2} V^a_{2n+2}}_{(p-1)~\text{times}}~\cdots ~\underbrace{U^c_{2n+2}V^a_{2n+2}}U^c_{2n+2}\\
            &=e^{-2\pi i (p-1)ac\af_{2n+2}} V^{2a}_{2n+2}\underbrace{U^c_{2n+2}~V^a_{2n+2}}\underbrace{U^c_{2n+2}V^a_{2n+2}}_{(p-2)~\text{times}}~\cdots ~\underbrace{U^c_{2n+2} V^a_{2n+2}} U^{2c}_{2n+2}\\
            &=e^{-2\pi i \{(p-1)+(p-2)\}ac\af_{2n+2}} V^{3a}_{2n+2}\underbrace{U^c_{2n+2}~V^a_{2n+2}}\underbrace{U^c_{2n+2}V^a_{2n+2}}_{(p-3)~\text{times}}~\cdots ~\underbrace{U^c_{2n+2}V^a_{2n+2}} U^{3c}_{2n+2}\\
            &=\cdots\\
            &=e^{-2\pi i\{(p-1)+(p-2)+\cdots+2+1\}ac\af_{2n+2}}~V_{2n+2}^{pa} U_{2n+2}^{pc}=e^{-\pi i(p^2-p)ac\af_{2n+2}}~V^{pa}_{2n+2} U_{2n+2}^{pc}.
        \end{align*}
        Using this identity together with Equation~\eqref{com action2}, we obtain
        \begin{equation}\label{UV condition}
        \pi_A^{n+1}\circ\varphi_n(V_{2n})=e^{-\pi iacp^2\af_{2n+2}}V^{pa}_{2n+2}U^{pc}_{2n+2}.
        \end{equation}
        Since $\af\in\Omega_p^{\rm{even}}$, we have
        \[
        p^2\af_{2n+2}\equiv\af_{2n}~(\mod~2\Z).
        \]
        Therefore, Equations~\eqref{UV condition} and \eqref{com action1} imply that
        \[
        \pi_A^{n+1}\circ\varphi_n(V_{2n})= e^{-\pi i ac\af_{2n}}V^{pa}_{2n+2}U^{pc}_{2n+2}= \varphi_n\circ\pi_A^n(V_{2n}).
        \]
        A similar computation shows that the same identity holds for $U_{2n}$. Hence the diagram commutes.
    \end{proof}

    We now pass from the torus levels to the inductive limit. Since
    $\mathscr{A}^{\om}_{\alpha}$ is the inductive limit of the noncommutative tori
    $A_{\alpha_{2n}}$, we have
    \[
    \A^{\om}_{\af}=\overline{\bigcup\limits_{n=1}^{\infty}\nu_n(A_{\af_{2n}})},
    \]
    where
    \[
    \nu_n : \left\{ \begin{array}{ccc}
    V_{{2n}} &\mapsto& W_{(\frac{1}{p^n},0)}\\[0.3em]
    U_{{2n}} &\mapsto& W_{(0,\frac{1}{p^n})}
    \end{array}\right.
    \]
    is the canonical embedding of $A_{\af_{2n}}$ into $\A^{\om}_{\af}$. By Lemma~\ref{compat of inductive}, the actions $\pi^n$ are compatible with the
    connecting maps of the inductive system. Hence, by the universal property of
    the inductive limit, for each $M\in \mathrm{SL}_2(\mathbb{Z})$ there exists a
    unique automorphism $\widetilde{\pi}^{\alpha}_M\in \operatorname{Aut}(\mathscr{A}^{\om}_{\alpha})$ such that
    \[
    \widetilde{\pi}^{\alpha}_M\circ \nu_n
    =
    \nu_n\circ \pi^n_M
    \qquad
    \text{for all } n\in\mathbb{N}.
    \]

    The group \(\mathrm{SL}_2(\mathbb Z[1/p])\) induces an action
    \(\lambda^\alpha\) on the noncommutative solenoid
    \(\mathscr A_\alpha^\omega\). Since \(\mathrm{SL}_2(\mathbb Z)\subseteq \mathrm{SL}_2(\mathbb Z[1/p])\), this action restricts to an action of \(\mathrm{SL}_2(\mathbb Z)\). On the other hand, the Watatani--Brenken actions on the torus levels induce an inductive-limit action \(\widetilde\pi^\alpha\). The next proposition shows that these two actions coincide.
    
    \begin{prp}\label{equal action}
    Let
    $\lambda^{\alpha}:\mathrm{SL}_2(\mathbb{Z})\to
    \operatorname{Aut}(\mathscr{A}^{\om}_{\alpha})$
    be the canonical action defined on the dense subalgebra
    $\ell^1(\Gamma_p,\omega_{\alpha})$ by
    \[
    \lambda_M^{\alpha}(f)(x)=f(M^{-1}x),
    \] for all $f\in \ell^1(\Gamma_p,\omega_{\alpha}), \ x\in \Gamma_p.$
    Then
    \[
    \lambda_M^{\alpha}
    =
    \widetilde{\pi}_M^{\alpha}
    \qquad
    \text{for every } M\in \mathrm{SL}_2(\mathbb{Z}).
    \]
    \end{prp}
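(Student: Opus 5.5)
Both $\lambda_M^{\alpha}$ and $\widetilde{\pi}_M^{\alpha}$ are automorphisms of $\mathscr{A}^{\om}_{\alpha}$. Hence it suffices to show that they agree on a generating set. We use the canonical unitaries $\nu_n(V_{2n})=W_{(\frac{1}{p^n},0)}$ and $\nu_n(U_{2n})=W_{(0,\frac{1}{p^n})}$ for all $n\in\N$. These generate $\nu_n(A_{\af_{2n}})$ as a $\mathrm{C}^*$-algebra, and the union $\bigcup_n\nu_n(A_{\af_{2n}})$ is dense in $\mathscr{A}^{\om}_{\alpha}$. Two $*$-homomorphisms that agree on these unitaries therefore agree on each $\nu_n(A_{\af_{2n}})$ and, by continuity, everywhere.

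Fix $M=\begin{pmatrix} a&b\\ c&d\end{pmatrix}\in\mathrm{SL}_2(\Z)$ and $n\in\N$. By the defining property of the inductive limit action,
\[
\widetilde{\pi}^{\alpha}_M\bigl(W_{(\frac1{p^n},0)}\bigr)=\nu_n\bigl(\pi^n_M(V_{2n})\bigr)=e^{-\pi i ac\af_{2n}}\,W_{(\frac1{p^n},0)}^{a}\,W_{(0,\frac1{p^n})}^{c},
\]
and similarly
\[
\widetilde{\pi}^{\alpha}_M\bigl(W_{(0,\frac1{p^n})}\bigr)=e^{-\pi i bd\af_{2n}}\,W_{(\frac1{p^n},0)}^{b}\,W_{(0,\frac1{p^n})}^{d}.
\]
On the other side, Subsection~\ref{action} already computes
\[
\lambda_M^{\alpha}\bigl(W_{(\frac1{p^k},0)}\bigr)=W_{(\frac{a}{p^k},\frac{c}{p^k})}=e^{-\pi iac\alpha_{2k}}\bigl(W_{(\frac1{p^k},0)}\bigr)^a\bigl(W_{(0,\frac1{p^k})}\bigr)^c,
\]
together with the analogous formula for $W_{(0,\frac1{p^k})}$. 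Taking $k=n$, these expressions coincide with those for $\widetilde{\pi}^{\alpha}_M$, which proves the claim.

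The one point to check is the identity used in Subsection~\ref{action}:
\[
W_{(\frac{a}{p^n},0)}=W_{(\frac1{p^n},0)}^a \quad\text{and}\quad W_{(0,\frac{c}{p^n})}=W_{(0,\frac1{p^n})}^c.
\]
This holds because $\om_\alpha$ vanishes on pairs of vectors lying on the same coordinate axis. For example,
\[
\om_\alpha\bigl((\tfrac{j_1}{p^{k_1}},0),(\tfrac{j_3}{p^{k_3}},0)\bigr)=e^{\pi i(\af\cdot j_1\cdot 0-\af\cdot 0\cdot j_3)}=1,
\]
and the same holds on the second axis. The relation also extends to negative powers, since $W_{-x}=W_x^*$ when $\om_\alpha(x,-x)=1$.

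We also need the scalar $\overline{\om_\alpha\bigl((\frac{a}{p^n},0),(0,\frac{c}{p^n})\bigr)}=e^{-\pi i ac\alpha_{2n}}$. When $a$ or $c$ is divisible by $p$, the reduced form changes the index of $\alpha$. Here one uses the even congruences $p^{r}\alpha_{m+r}\equiv\alpha_m\pmod{2\Z}$ for $\alpha\in\Om_p^{\rm even}$ to see that the value is unchanged. This reduced-form bookkeeping is the only potential obstacle. The same congruence argument, as in the invariance computation, shows that the formula for $\om_\alpha$ is independent of the chosen representation $j/p^k$.
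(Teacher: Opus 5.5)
Your proposal is correct and is essentially the paper's argument. The paper also checks $\lambda^{\alpha}_M\circ\nu_n=\nu_n\circ\pi^n_M$ on the generators $V_{2n},U_{2n}$, using the formula $\lambda_M^{\alpha}(W_{(\frac1{p^n},0)})=e^{-\pi i ac\alpha_{2n}}(W_{(\frac1{p^n},0)})^a(W_{(0,\frac1{p^n})})^c$ from Subsection~\ref{action}, and concludes by density of $\bigcup_n\nu_n(A_{\alpha_{2n}})$; your additional checks of the axis relation $W_{(\frac{a}{p^n},0)}=W_{(\frac1{p^n},0)}^a$ and of the reduced-form bookkeeping via $p^r\alpha_{m+r}\equiv\alpha_m \pmod{2\mathbb Z}$ are correct and fill in details the paper leaves implicit.
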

    
    \begin{proof}
    Since
    \[
    \A^{\om}_{\af}=\overline{\bigcup_{n=1}^{\infty}\nu_n(A_{\af_{2n}})},
    \] it is enough to show that
    \[
    \ld_A^{\af}\circ \nu_n=\nu_n\circ \pi_A^n
    \qquad \text{for every } n\in\N.
    \]
    As both sides are $*$-homomorphisms on $A_{\af_{2n}}$, it suffices to verify this identity on the generators $V_{2n}$ and $U_{2n}$. First, we get
    \begin{align*}
        \ld_A^{\af}(\nu_n(V_{2n}))
        &=\ld_A^{\af}(W_{(\frac{1}{p^n},0)})
        =e^{-\pi iac\af_{2n}}\bigl(W_{(\frac{1}{p^n},0)}\bigr)^a\bigl(W_{(0,\frac{1}{p^n})}\bigr)^c \\
        &=\nu_n\!\left(e^{-\pi iac\af_{2n}} V_{2n}^a U_{2n}^c\right)=\nu_n(\pi_A^n(V_{2n})).
    \end{align*}
    
    \noindent Similarly, we have
    \begin{align*}
        \ld_A^{\af}(\nu_n(U_{2n}))
        &=\ld_A^{\af}(W_{(0,\frac{1}{p^n})})
        =e^{-\pi ibd\af_{2n}}\bigl(W_{(\frac{1}{p^n},0)}\bigr)^b\bigl(W_{(0,\frac{1}{p^n})}\bigr)^d \\
        &=\nu_n\!\left(e^{-\pi ibd\af_{2n}} V_{2n}^b U_{2n}^d\right)=\nu_n(\pi_A^n(U_{2n})).
    \end{align*}
    
    Therefore,
    \[
    \ld_A^{\af}\circ \nu_n=\nu_n\circ \pi_A^n
    \]
    on the generators of $A_{\af_{2n}}$, and hence on all of $A_{\af_{2n}}.$ By the universal property of the inductive limit, it follows that
    \[
    \ld_A^{\af}=\widetilde{\pi}_A^{\af}.
    \]
    \end{proof}

    \begin{rmk}
    \emph{Let $H$ be a subgroup of $\mathrm{SL}_2(\mathbb Z).$ Then the induced action of $H$ on
    $\mathscr A^{\om}_\alpha=\varinjlim A_{\alpha_{2n}}$ satisfies
    \[
    \mathscr A^{\om}_\alpha\rtimes H
    \cong
    \varinjlim\bigl(A_{\alpha_{2n}}\rtimes H,\widetilde{\varphi_n}\bigr).
    \]
    In particular, this applies to the finite cyclic subgroups and to the infinite cyclic actions considered in this paper. Since this observation is not needed in the subsequent arguments, we omit the details.}
    \end{rmk}

%%%%%%%%%%%%%%%%%%%%%%%%%%%%%%%%%%%%%%%%%%%%%%%%%%%%%%%%%%%%%%%%%%%%%%%%%%%%%%%%%%%%%%%%%%%%%%%%%%%%%%%%%%%%%%%%%%%%%%%%%%%%%%%%%%%%%%%%%%%%%%%%%%%%%%%%%%%%%%%%%%%%%%%%%%%%%%%%%%%%%%%%%%%%%%%%%%%%%%%%%%%%%%%%%%%%%%%%%%%%%%%%%%%%%%%%%%%%%%%%%%%%%%%%%%%%%%%%%%%%%%%%%%%%%%%%%%%%%%%%%%%%%

\section{Rieffel's Heisenberg bimodule construction}\label{bimodule}
    
    In this section we recall Rieffel's Heisenberg equivalence bimodule construction in the form needed for noncommutative solenoids. This bimodule construction will be needed when we construct equivalence bimodules between crossed products and determine the Morita equivalence classes (cf. Sections \ref{Morita eq} and \ref{Morita eq 2}). The general construction is due to Rieffel; see especially \cite{Rie88}*{Sections~2--3}. Throughout the rest of this paper, we assume that \(p\) is an odd prime. Let $\af=(\af_n)_{n\in\N}\in \Om_p^{\rm{even}}$ satisfy 
    \[
    p\af_{n+1}=\af_n+x_n, \quad \text{ with } x_n \in\{0,2,\ldots, p-1\}.
    \]
    Thus, if the associated $p$-adic integer is
    \[
        x_{\af}:=\sum_{i=0}^{\infty}x_i p^i\in \Z_p,
    \]
    then \(x_{\af}\in \Z_p^{\times}\). We write
    \[
        x_{\af}^{-1}=\sum_{i=0}^{\infty}y_i p^i\in \Z_p.
    \]
    
    Following the Heisenberg bimodule construction of Rieffel
    \cite{Rie88}*{Section~2}, and its use for noncommutative solenoids in
    \cites{JP13,Lu22}, one obtains a parameter
    \(\bt=(\bt_n)_{n\in\N}\in \Omega_p\) defined by
    \[
        \bt_n
        =
        \frac{1}{\af_0p^n}
        +
        \frac{\sum_{i=0}^{n-1}y_ip^i}{p^n}.
    \]
    Equivalently, for
    \[
        \af_n
        =
        \frac{\af_0+\sum_{j=0}^{n-1}x_jp^j}{p^n},
        \qquad
        \bt_n
        =
        \frac{\frac{1}{\af_0}+\sum_{j=0}^{n-1}y_jp^j}{p^n},
    \]
    the corresponding noncommutative solenoids \(\A_{\af}\) and
    \(\A_{\bt}\) are Morita equivalent.
    
    We now describe the dense pre-imprimitivity bimodule explicitly. Let $M:=\Q_p\times \R.$ The underlying vector space of the pre-imprimitivity bimodule is the
    Bruhat-Schwartz space $\mathcal S(M)=\mathcal S(\Q_p\times \R).$ Concretely,
    \[
        \mathcal S(\Q_p\times\R)
        =
        \mathcal S(\Q_p)\widehat{\otimes}\mathcal S(\R),
    \] where $\mathcal S(\Q_p)$ is the space of locally constant compactly supported functions on \(\Q_p\), and \(\mathcal S(\R)\) is the usual Schwartz space on \(\R\) \cite{Dei12}*{p.134}. Equivalently, \(\mathcal S(\Q_p\times\R)\) consists of finite sums
    \[
        f(q,t)=\sum_{i=1}^N \phi_i(q)\psi_i(t),
    \]
    where \(\phi_i\in \mathcal S(\Q_p)\) and
    \(\psi_i\in\mathcal S(\R)\). 
    Rieffel's construction uses precisely such Schwartz-Bruhat spaces to ensure that the coefficient functions of Schwartz vectors again belong to the relevant Schwartz algebra; see \cite{Rie88}*{Lemma~2.3 and Corollary~2.4}. We denote by
    \[
        \A_{\af}^{\infty}:=\mathcal S(\Gm_p,\omega_{\af}),
        \qquad
        \A_{\bt}^{\infty}:=\mathcal S(\Gm_p,\omega_{\bt}),
    \]
    the dense twisted subalgebras of $\A_{\af}=C^*(\Gm_p,\omega_{\af}),$ and $\A_{\bt}=C^*(\Gm_p,\omega_{\bt})$ respectively.

    \begin{ntn}
    Throughout the rest of the article, we use the notation $e(t):=e^{2\pi i t},$ for $t\in \R,$ and $e_p(x):=e^{2\pi i\{x\}_p},$ for $x\in \Q_p,$ where \(\{x\}_p\) denotes the $p$-adic fractional part of $x$.
    \end{ntn}
    
    Let \(W_{a,b}\) and \(M_{a,b}\) denote the canonical unitaries corresponding to
    \((a,b)\in\Gamma_p\) in \(\mathcal A_{\alpha}^{\omega}\) and
    \(\mathcal A_{\beta}^{\omega}\), respectively. We first define a right action of the canonical generators of \(\A_{\af}^{\infty}\) on \(\mathcal S(M)\). For \(k\in\N\), set
    \[
        \left(f\cdot W_{\frac{1}{p^k},0}\right)(m,n)
        :=
        f\left(m+x_{\af}\frac{1}{p^k},\,n+\frac{\af_0}{p^k}\right),
    \]
    and
    \[
        \left(f\cdot W_{0,\frac{1}{p^k}}\right)(m,n)
        :=
        e_p\left(-\frac{m}{p^k}\right)
        e\left(-\frac{n}{p^k}\right)
        f(m,n).
    \]
    These formulas are compatible with the defining relations of \(\A_{\af}^{\infty}\):
    \[
    W_{\frac{1}{p^k},0} W_{0,\frac{1}{p^k}}=e(\af_{2k}) W_{0,\frac{1}{p^k}} W_{\frac{1}{p^k},0}.
    \]
    
    Similarly, the left action of the canonical generators of
    \(\A_{\bt}^{\infty}\) on \(\mathcal S(M)\) is defined by
    \[
        \left(M_{\frac{1}{p^k},0}\cdot f\right)(m,n)
        :=
        f\left(m-\frac{1}{p^k},\,n+\frac{1}{p^k}\right),
    \]
    and
    \[
        \left(M_{0,\frac{1}{p^k}}\cdot f\right)(m,n)
        :=
        e\left(\frac{n}{\af_0p^k}\right)
        e_p\left(-\frac{my_{\af}}{p^k}\right)
        f(m,n).
    \] They also satisfy the following relations of $\A_{\bt}^{\I}:$
    \[
    M_{\frac{1}{p^k},0} M_{0,\frac{1}{p^k}}=e(\bt_{2k}) M_{0,\frac{1}{p^k}} M_{\frac{1}{p^k},0}.
    \]
    
    It will be convenient to write down explicit formulas for the actions of $W_{a,b}$ and $M_{a,b}$ on $\mathcal{S}(\Q_p\times\R)$ for arbitrary $(a,b)\in\Gamma_p$.
    
    \begin{prp}
    Let
    \(
        (a,b)=\left(\frac{j_1}{p^{k_1}},\frac{j_2}{p^{k_2}}\right)
        \in \Gm_p.
    \)
    Then, for all \(f\in \mathcal S(\Q_p\times \R)\), we have
    \[
        \left(f\cdot W_{a,b}\right)(m,n)
        =
        e\left(-\frac{\af_{k_1+k_2}j_1j_2}{2}\right)
        e_p(-mb)e(-nb)
        f(m+x_{\af}a,n+\af_0a),
    \]
    and
    \[
        \left(M_{a,b}\cdot f\right)(m,n)
        =
        e\left(\frac{\bt_{k_1+k_2}j_1j_2}{2}\right) e_p(-my_{\af}b)
        e\left(\frac{nb}{\af_0}\right)
        f(m-a,n+a).
    \]
    \end{prp}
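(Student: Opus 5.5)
The plan is to extend the generator formulas multiplicatively. Every element of $\Gm_p$ can be written as $(a,b)=(a,0)+(0,b)$, and the cocycle relation in $\A^\om_\af$ gives
\[
W_{a,0}W_{0,b}=\om_\af\bigl((a,0),(0,b)\bigr)W_{a,b}=e\!\left(\tfrac{\af_{k_1+k_2}j_1j_2}{2}\right)W_{a,b}.
\]
This uses $\om_\af((j_1/p^{k_1},0),(0,j_2/p^{k_2}))=e^{\pi i\af_{k_1+k_2}j_1j_2}$, since the second term of $\om_\af$ vanishes. Hence $W_{a,b}=e(-\af_{k_1+k_2}j_1j_2/2)\,W_{a,0}W_{0,b}$. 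Similarly, in $\A^\om_\bt$ we get $M_{a,b}=e(-\bt_{k_1+k_2}j_1j_2/2)\,M_{a,0}M_{0,b}$, but it is better to use the opposite splitting $M_{0,b}M_{a,0}=\om_\bt((0,b),(a,0))M_{a,b}=e(-\bt_{k_1+k_2}j_1j_2/2)M_{a,b}$. That gives $M_{a,b}=e(\bt_{k_1+k_2}j_1j_2/2)M_{0,b}M_{a,0}$, which produces exactly the sign appearing in the statement. The right action is an anti-homomorphism in the sense that $f\cdot(W_xW_y)=(f\cdot W_x)\cdot W_y$. So I would compute $f\cdot W_{a,b}=e(-\af_{k_1+k_2}j_1j_2/2)\,(f\cdot W_{a,0})\cdot W_{0,b}$, and for the left action $M_{a,b}\cdot f=e(\bt_{k_1+k_2}j_1j_2/2)\,M_{0,b}\cdot(M_{a,0}\cdot f)$.

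First I need the one-variable formulas for $W_{a,0}$, $W_{0,b}$, $M_{a,0}$, $M_{0,b}$ with general $a=j_1/p^{k_1}$ and $b=j_2/p^{k_2}$. In $\A^\om_\af$ we have $W_{1/p^k,0}^{j}=W_{j/p^k,0}$, because $\om_\af$ is trivial on the subgroup $\Z[1/p]\times\{0\}$; the same holds for the second coordinate and for $\om_\bt$. Iterating the given translation formulas $j_1$ times (and using inverses for negative $j_1$) then gives $(f\cdot W_{a,0})(m,n)=f(m+x_\af a,n+\af_0 a)$ and $(M_{a,0}\cdot f)(m,n)=f(m-a,n+a)$. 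Iterating the multiplication operators gives $(f\cdot W_{0,b})(m,n)=e_p(-mb)e(-nb)f(m,n)$; here I use that $e_p$ is a character of $\Q_p$, i.e. $\{x+y\}_p\equiv\{x\}_p+\{y\}_p \pmod{\Z}$. Likewise $(M_{0,b}\cdot f)(m,n)=e(nb/\af_0)e_p(-my_\af b)f(m,n)$. A minor point is that the generating formulas were given at level $k$; I also need consistency across levels, i.e. that $W_{1/p^{k+1},0}^p=W_{1/p^k,0}$ acts correctly. This is immediate for the translations ($p\cdot x_\af/p^{k+1}=x_\af/p^k$), so taking any level $k\ge k_1$ is harmless.

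Next, I compose. In the right action the multiplier $e_p(-mb)e(-nb)$ is applied after the translation, so it is evaluated at $(m,n)$ and does not get shifted: $((f\cdot W_{a,0})\cdot W_{0,b})(m,n)=e_p(-mb)e(-nb)f(m+x_\af a,n+\af_0 a)$. This already matches the claimed formula. In the left action, $M_{0,b}$ multiplies $(M_{a,0}\cdot f)(m,n)=f(m-a,n+a)$ by $e(nb/\af_0)e_p(-my_\af b)$, which again matches the claim.

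The main obstacle is bookkeeping rather than substance. I must get the order-of-composition conventions for left versus right actions right, choose the correct splitting so that the phase comes out as $e(+\bt_{k_1+k_2}j_1j_2/2)$, and check the compatibility of the level-$k$ formulas. If instead I split as $M_{a,0}M_{0,b}$, I would get the shifted phase $e(-(n+a)b/\af_0)$ together with $e_p$ evaluated at $m-a$. The discrepancy $e(ab/\af_0)e_p(ay_\af b)$ would then have to equal $e(\bt_{k_1+k_2}j_1j_2)$. This is precisely the defining congruence for $\bt$ via Lemma~\ref{fractional-part-congruence}, and it can serve as a cross-check.
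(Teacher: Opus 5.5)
Your argument is correct and is essentially the paper's proof: you use the cocycle $\omega$ to split $W_{a,b}$ and $M_{a,b}$ into a pure translation and a pure modulation, then compose the generator formulas, and your right-action computation is identical to the paper's. The only difference is on the left. The paper uses the splitting $M_{a,0}M_{0,b}$ and then silently absorbs the shifted phase via $e(ab/\alpha_0)\,e_p(aby_\alpha)=e(\beta_{k_1+k_2}j_1j_2)$ (Lemma~\ref{fractional-part-congruence} plus the definition of $\beta$). Your reversed splitting $M_{0,b}M_{a,0}$ gives the phase $e(+\beta_{k_1+k_2}j_1j_2/2)$ directly, so your ``cross-check'' is exactly the paper's route.
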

    
    \begin{proof}
    We first prove the formula for the right action. By Weyl normalisation,
    \[
        W_{a,b}
        =
        \overline{\omega_{\af}\left((a,0),(0,b)\right)}
        W_{a,0}W_{0,b}.
    \]
    For $a=\frac{j_1}{p^{k_1}},\,b=\frac{j_2}{p^{k_2}},$ this gives
    \[
        W_{a,b}
        =
        e\left(-\frac{\af_{k_1+k_2}j_1j_2}{2}\right)
        \left[W_{\frac{1}{p^{k_1}},0}\right]^{j_1}
        \left[W_{0,\frac{1}{p^{k_2}}}\right]^{j_2}.
    \]
    Therefore,
    \[
        \begin{aligned}
            [f\cdot W_{a,b}](m,n)
            &:=\left(f.W_{\frac{j_1}{p^{k_1}},\frac{j_2}{p^{k_2}}}\right)(m,n) = e\left(-\frac{\af_{k_1+k_2}j_1j_2}{2}\right) \left(f\cdot W_{\frac{j_1}{p^{k_1}},0} W_{0,\frac{j_2}{p^{k_2}}}\right)(m,n)\\
            &= e\left(-\frac{\af_{k_1+k_2}j_1j_2}{2}\right) e_p\left(-\frac{mj_2}{p^{k_2}}\right) e\left(-\frac{nj_2}{p^{k_2}}\right) \left(f\cdot\left[W_{\frac{1}{p^{k_1}},0}\right]^{j_1}\right)(m,n)\\
            &= e\left(-\frac{\af_{k_1+k_2}j_1j_2}{2}\right) e_p\left(-\frac{mj_2}{p^{k_2}}\right) e\left(-\frac{nj_2}{p^{k_2}}\right) f\left(m+x_{\af}\frac{j_1}{p^{k_1}},n+\frac{\af_0j_1}{p^{k_1}}\right)\\
            &=e\left(-\frac{\af_{k_1+k_2}j_1j_2}{2}\right) e_p\left(-mb\right) e(-nb) f(m+x_{\af}a,n+\af_0 a).
        \end{aligned}
    \]
    For the left action, using
    \[
        M_{a,b}
        =
        \overline{\omega_{\bt}\left((a,0),(0,b)\right)}
        M_{a,0}M_{0,b},
    \]
    we obtain
    \begin{align*}
        \left[M_{a,b}\cdot f\right](m,n)
        &:=\overline{\omega_{\bt}\left(\left(\frac{j_1}{p^{k_1}},0\right),\left(0,\frac{j_2}{p^{k_2}}\right)\right)} \left[M_{\frac{j_1}{p^{k_1}},0} M_{0,\frac{j_2}{p^{k_2}}}\cdot f\right](m,n)\\
        &=e\left(-\frac{\bt_{k_1+k_2}j_1j_2}{2}\right) \left[M_{0,\frac{j_2}{p^{k_2}}}\cdot f\right] \left(m-\frac{j_1}{p^{k_1}},n+\frac{j_1}{p^{k_1}}\right)\\
        &=e\left(-\frac{\bt_{k_1+k_2}j_1j_2}{2}\right) e\left(\frac{\left(n+\frac{j_1}{p^{k_1}}\right)j_2}{\af_0.p^{k_2}}\right) e_p\left(-\frac{(m-\frac{j_1}{p^{k_1}})y_{\af}j_2}{p^{k_2}}\right)\\
        &\hspace{6.7cm}f\left(m-\frac{j_1}{p^{k_1}},n+\frac{j_1}{p^{k_1}}\right)\\
        &=e\left(\frac{\bt_{k_1+k_2}j_1j_2}{2}\right) e\left(\frac{nj_2}{\af_0p^{k_2}}\right) e_p\left(-\frac{my_{\af}j_2}{p^{k_2}}\right) f\left(m-\frac{j_1}{p^{k_1}},n+\frac{j_1}{p^{k_1}}\right)\\
        &=e\left(\frac{\bt_{k_1+k_2}j_1j_2}{2}\right) e\left(\frac{nb}{\af_0}\right) e_p\left(-{my_{\af}b}\right) f(m-a,n+a).
    \end{align*}
    This proves the proposition.
    \end{proof}
    
    We now define the algebra-valued inner products. These are the
    coefficient-function inner products appearing in Rieffel's Heisenberg
    bimodule construction; compare \cite{Rie88}*{Notation~2.5 and the discussion
    following it}. For \(f,g\in \mathcal S(\Q_p\times \R)\) and
    \((a,b)\in \Gm_p\), set
    \[
        \langle f,g\rangle_{\A_{\af}}(a,b)
        :=
        \left\langle g\cdot W_{-a,-b},\,f
        \right\rangle_{\rm L^2(\Q_p\times\R)},
    \]
    and
    \[
        {}_{\A_{\bt}}\langle f,g\rangle(a,b)
        :=
        \left\langle f,\,M_{a,b}\cdot g
        \right\rangle_{\rm L^2(\Q_p\times\R)}.
    \]
    By \cite{Rie88}*{Lemma~2.3 and Corollary~2.4}, for
    \(f,g\in \mathcal S(M)\), the functions
    \[
        (a,b)\longmapsto
        \langle f,g\rangle_{\A_{\af}}(a,b)
    \quad\text{ and }\quad
        (a,b)\longmapsto
        {}_{\A_{\bt}}\langle f,g\rangle(a,b)
    \]
    belong respectively to the dense twisted Schwartz algebras $\mathcal S(\Gm_p,\omega_{\af})\text{ and } \mathcal S(\Gm_p,\omega_{\bt}).$ Thus the inner products are first defined with values in \(\A_{\af}^{\infty}\) and \(\A_{\bt}^{\infty}\).
    
    \begin{thm}
    With the above left action, right action, and inner products,
    \(\mathcal S(\Q_p\times \R)\) is an
    \(
        \A_{\bt}^{\infty}\text{-}\A_{\af}^{\infty}
    \)
    pre-equivalence bimodule. Its completion with respect to the norm
    \[
        \|f\|_{\A_{\af}}
        :=
        \left\|\langle f,f\rangle_{\A_{\af}}\right\|^{1/2}
    \]
    is an \(\A_{\bt}\)-\(\A_{\af}\) imprimitivity bimodule. In particular, the
    completed module is a finitely generated projective right
    \(\A_{\af}\)-module.
    \end{thm}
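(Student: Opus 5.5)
The plan is to reduce the statement to Rieffel's general Heisenberg bimodule theorem \cite{Rie88}*{Theorem~2.15 and Section~3}. That theorem applies to a lattice $D$ embedded in $M\times\widehat M$ for a locally compact abelian group $M$. We take $M=\Q_p\times\R$, so $\widehat M\cong \Q_p\times\R$ via the pairing $\langle (m,n),(s,t)\rangle=e_p(ms)e(nt)$.

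The right action of $W_{a,b}$ is then a time–frequency shift along the embedding
\[
\iota_\af(a,b)=\bigl((x_\af a,\af_0 a),(-b,-b)\bigr)\in M\times\widehat M.
\]
Similarly, the left action of $M_{a,b}$ is the time–frequency shift along
\[
\iota_\bt(a,b)=\bigl((-a,a),(-y_\af b,\, b/\af_0)\bigr),
\]
up to the scalar factors $e(\mp\af_{k_1+k_2}j_1j_2/2)$ and $e(\pm\bt_{k_1+k_2}j_1j_2/2)$ recorded in the preceding proposition.

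\textbf{Step 1 (lattice conditions).} We check that $\iota_\af(\Gm_p)$ is discrete and cocompact in $(\Q_p\times\R)^2$. This is the standard solenoid fact that $\Z[1/p]$ embeds diagonally as a lattice in $\Q_p\times\R$. Injectivity and discreteness of $\iota_\af$ use $\af_0\neq0$ and that $x_\af$ is a unit.

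\textbf{Step 2 (multiplier and commutant).} We check that the cocycle induced by the Heisenberg cocycle on $\iota_\af(\Gm_p)$ is, after the half-normalisation, exactly $\omega_\af$. This reduces to the congruence
\[
\{x_\af s_1s_2\}_p+\af_0 s_1s_2\equiv \af_{k_1+k_2}j_1j_2 \pmod{\Z},
\]
which follows from Lemma~\ref{fractional-part-congruence} together with $p^n\af_n=\af_0+\sum_{j<n}x_jp^j$. Evenness of the digits then makes the half-integer version hold modulo $2\Z$.

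Next we show that $\iota_\bt(\Gm_p)$ is precisely the symplectic complement (annihilator) of $\iota_\af(\Gm_p)$. The computation showing that the two actions commute uses Lemma~\ref{inverse-multiply-one}: truncations of $x_\af$ and $y_\af=x_\af^{-1}$ multiply to $1$ modulo $p^{k}$. This is what produces $\bt_n$ in the stated form. The same computation gives the induced cocycle on the complement as $\omega_\bt$, with the reversed sign convention in Rieffel's left action.

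\textbf{Step 3 (Rieffel's theorem).} Rieffel's theorem now yields that $\mathcal S(M)$ is a pre-imprimitivity bimodule between the twisted Schwartz algebras. The coefficient functions lie in $\mathcal S(\Gm_p,\omega)$ by \cite{Rie88}*{Lemma~2.3 and Corollary~2.4}. The required identities are compatibility of the actions with the inner products, fullness, and the associativity condition ${}_{\A_\bt}\langle f,g\rangle h=f\langle g,h\rangle_{\A_\af}$. They follow from the Poisson summation formula for the pair (lattice, annihilator), which Rieffel proves in this generality.

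\textbf{Step 4 (completion).} Completing gives an imprimitivity bimodule. Finite generation and projectivity follow because $\A_\bt$ is unital: $\A_\bt$ acts by compact operators, so its unit is a finite sum $\sum_i{}_{\A_\bt}\langle f_i,f_i\rangle$. Hence $\mathrm{id}=\sum\theta_{f_i,f_i}$, and the module is a direct summand of $\A_\af^N$.

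\textbf{Main obstacle.} I expect the hardest part to be Step 2: the $p$-adic bookkeeping showing that the annihilator of $\iota_\af(\Gm_p)$ is exactly $\iota_\bt(\Gm_p)$, not just a subgroup of it, with the half-phase factors matching modulo $2\Z$. Here I would first work at a fixed level $k$, using Lemma~\ref{inverse-multiply-one}, and then pass to the union over $k$.
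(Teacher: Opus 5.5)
Your proposal is correct and takes the same route as the paper. The paper simply invokes Rieffel's Theorem~2.15 for the imprimitivity bimodule, and Rieffel's lattice-case Proposition~3.2 (unitality of the twisted group algebras) for finite generation and projectivity. You in addition make explicit the hypotheses the paper leaves implicit: the lattice embeddings, the identification of the induced cocycles with $\omega_\af$ and $\omega_\bt$, and $\iota_\bt(\Gm_p)=\iota_\af(\Gm_p)^\perp$. You also prove the finitely generated projective part directly instead of citing it.

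One simplification is available. The annihilator identity needs only $x_\af y_\af=1$ in $\Z_p$, the fact that $e_p(r)=e(r)$ for $r\in\Z[1/p]$, and the standard duality that the annihilator of the diagonal copy of $\Z[1/p]$ in $\Q_p\times\R$ is the antidiagonal copy. So no level-wise truncation argument is required. By contrast, it is the evenness of the digits of $y_\af$ (i.e.\ $x_\af\in\mathcal K_p$) that makes the induced half-cocycle on $\iota_\bt(\Gm_p)$ exactly $\omega_\bt$, a use of evenness you did not state explicitly for the $\bt$ side.
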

    
    \begin{proof}
    The completion of \(\mathcal S(M)\) is an \(\A_{\bt}\)-\(\A_{\af}\) imprimitivity bimodule follows from \cite{Rie88}*{Theorem~2.15} . Since the relevant subgroups are lattices, the twisted group \(\rm C^*\)-algebras are unital, and Rieffel's lattice-case result \cite{Rie88}*{Proposition~3.2} implies that the completed module is
    finitely generated projective as a right \(\A_{\af}\)-module.
    \end{proof}

%%%%%%%%%%%%%%%%%%%%%%%%%%%%%%%%%%%%%%%%%%%%%%%%%%%%%%%%%%%%%%%%%%%%%%%%%%%%%%%%%%%%%%%%%%%%%%%%%%%%%%%%%%%%%%%%%%%%%%%%%%%%%%%%%%%%%%%%%%%%%%%%%%%%%%%%%%%%%%%%%%%%%%%%%%%%%%%%%%%%%%%%%%%%%%%%%%%%%%%%%%%%%%%%%%%%%%%%%%%%%%%%%%%%%%%%%%%%%%%%%%%%%%%%%%%%%%%%%%%%%%%%

\section{Heisenberg--Weyl representation on \texorpdfstring{$\Q_p\times\R$}{QpR}}
\label{sec:weyl-metaplectic}

    In this section we construct the Heisenberg--Weyl representation associated to
    the self-dual locally compact abelian group \(M=\mathbb Q_p\times\mathbb R\). We define the corresponding Weyl operators on \(\rm L^2(M)\) and record their basic covariance properties. We then show that the natural action of
    \(\mathrm{SL}_2(\mathbb Z[1/p])\) on the phase space \(G=M\times\widehat M\) is implemented, up to a scalar, by unitary operators. This gives a projective unitary representation of \(\mathrm{SL}_2(\mathbb Z[1/p])\), which will be used later to study equivariant Heisenberg bimodules.

    Let $p$ be an odd prime. We write elements of \(M\) as \(t=(t_1,t_2)\), with
    \(t_1\in\mathbb Q_p\) and \(t_2\in\mathbb R\), and put 
    $\mathcal H:=\rm L^2(M).$ Consider the Schwartz space $\mathcal S(M).$ Fix $\alpha_0\in\mathbb R^\times$ and
    \(y_\alpha\in\mathbb Q_p^\times\). Throughout this section, we normalise the Haar measure on \(M\) to be self-dual (i.e. $M\cong \widehat M$) with respect to the bicharacter \(\chi\)
    \begin{equation}
        \chi(t,\xi)
        :=e_p(t_1\xi_1y_\alpha)
          e\!\left(\frac{t_2\xi_2}{\alpha_0}\right),
        \qquad t,\xi\in M.
        \label{eq:bicharacter}
    \end{equation}
    Thus
    \[
        \chi(t+t',\xi)=\chi(t,\xi)\chi(t',\xi),
        \quad
        \chi(t,\xi+\xi')=\chi(t,\xi)\chi(t,\xi'),
    \quad\text{ and }\quad \chi(t,\xi)=\chi(\xi,t).
    \] 
    
    \subsection{The Heisenberg--Weyl representation}
    
    For \(x,\xi\in M\), define translation and modulation operators by
    \[
        (T_xf)(t):=f(t-x),
        \qquad
        (M_\xi f)(t):=\chi(t,\xi)f(t).
    \]
    Both are unitary on \(\mathcal H\) and preserve \(\mathcal S(M)\).
    
    \begin{lem}\label{lem:canonical-commutation}
        For all \(x,\xi\in M\), we have
        \begin{equation}
            M_\xi T_x=\chi(x,\xi)T_xM_\xi.
            \label{eq:canonical-commutation}
        \end{equation}
    \end{lem}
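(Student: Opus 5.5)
The identity \eqref{eq:canonical-commutation} is an equality of unitary operators on \(\mathcal H=\mathrm L^2(M)\), so I plan to verify it pointwise on an arbitrary \(f\in\mathcal H\) (equivalently on the dense subspace \(\mathcal S(M)\), and then extend by continuity). The computation needs only the definitions of \(T_x\) and \(M_\xi\) together with the bicharacter properties of \(\chi\) listed after \eqref{eq:bicharacter}.

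The first step is to compute the left-hand side. For \(t\in M\), applying \(T_x\) and then \(M_\xi\) gives
\[
(M_\xi T_x f)(t)=\chi(t,\xi)\,(T_xf)(t)=\chi(t,\xi)\,f(t-x).
\]
The second step is the right-hand side. Since \((M_\xi f)(s)=\chi(s,\xi)f(s)\), evaluating at \(s=t-x\) yields
\[
(T_xM_\xi f)(t)=(M_\xi f)(t-x)=\chi(t-x,\xi)\,f(t-x).
\]
The third step is to compare the two. Additivity of \(\chi\) in the first variable gives \(\chi(t,\xi)=\chi(t-x,\xi)\,\chi(x,\xi)\). Hence
\[
(M_\xi T_xf)(t)=\chi(x,\xi)\,\chi(t-x,\xi)f(t-x)=\chi(x,\xi)\,(T_xM_\xi f)(t),
\]
which is the claimed relation.

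The only point needing care is that \(\chi\) really is additive in the first variable. On the real factor this is clear from \(e(u+v)=e(u)e(v)\). On the \(p\)-adic factor it requires \(e_p(u+v)=e_p(u)e_p(v)\), that is, \(\{u+v\}_p\equiv\{u\}_p+\{v\}_p \pmod{\Z}\). This holds because \(u-\{u\}_p\) and \(v-\{v\}_p\) lie in \(\Z_p\), so \(\{u+v\}_p-\{u\}_p-\{v\}_p\) lies in \(\Z_p\cap\Z[1/p]=\Z\). The paper already records this property of \(\chi\) after \eqref{eq:bicharacter}, so I would simply cite it. I do not expect any real obstacle; the statement is a direct consequence of the definitions.
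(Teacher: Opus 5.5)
Your proposal is correct and follows essentially the same route as the paper: both evaluate $M_\xi T_x f$ and $T_x M_\xi f$ at a point and use additivity of $\chi$ in the first variable to relate $\chi(t,\xi)$ and $\chi(t-x,\xi)$. The paper verifies this on $\mathcal S(M)$ and extends by density, and it takes additivity of $\chi$ as given. Your extra check that $e_p$ is a character, via $\mathbb Z_p\cap\mathbb Z[1/p]=\mathbb Z$, is a valid justification of that step.
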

    
    \begin{proof}
        For \(f\in\mathcal S(M)\) and \(t\in M\), bicharacterity gives
        \[
        \begin{aligned}
            (T_xM_\xi f)(t)
            &=\chi(t-x,\xi)f(t-x)\\
            &=\overline{\chi(x,\xi)}\,\chi(t,\xi)f(t-x)\\
            &=\overline{\chi(x,\xi)}(M_\xi T_xf)(t).
        \end{aligned}
        \]
        This proves \eqref{eq:canonical-commutation} on the dense subspace
        \(\mathcal S(M)\), and hence on all of \(\mathcal H\).
    \end{proof}
    
    Since multiplication by \(2\) is an automorphism of \(M\), the element
    \(x/2\) is defined for every \(x\in M\).  We define the Weyl normalisation by
    \begin{equation}
        \W(x,\xi)
        :=\chi\!\left(-\frac{x}{2},\xi\right)M_\xi T_x.
        \label{eq:weyl-definition}
    \end{equation}
    Equivalently,
    \begin{equation}
        (\W(x,\xi)f)(t)
        =\chi\!\left(t-\frac{x}{2},\xi\right)f(t-x).
        \label{eq:weyl-coordinate-free}
    \end{equation}
    In coordinates,
    \[
    \begin{aligned}
        (\W(x,\xi)f)(t_1,t_2)
        &=
        e_p\!\left(\left(t_1-\frac{x_1}{2}\right)\xi_1y_\alpha\right)
        e\!\left(\frac{(t_2-x_2/2)\xi_2}{\alpha_0}\right)  f(t_1-x_1,t_2-x_2).
    \end{aligned}
    \]
    
    \begin{rmk}
    The restriction to odd primes is not needed merely to write down the Weyl
    operators, since \(1/2\in\mathbb Q_p\) for every prime \(p\). It becomes
    important when we impose integrality conditions. If \(p\neq 2\), then
    \(2\in\mathbb Z_p^\times\), so \(1/2\in\mathbb Z_p\). Hence division by
    \(2\) preserves \(\mathbb Z_p\), and integral quadratic or bilinear terms
    remain invisible to the standard additive character. This fails for
    \(p=2\), where \(1/2\notin\mathbb Z_2\).
    \end{rmk}
    
    \begin{lem}\label{lem:weyl-irreducible}
        The Weyl system
        \(
            \{\W(x,\xi):x,\xi\in M\}
        \)
        acts irreducibly on \(\mathcal H\).  Equivalently,
        \[
            \{\W(x,\xi):x,\xi\in M\}'=\mathbb C I.
        \]
    \end{lem}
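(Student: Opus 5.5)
The plan is to show that any bounded operator $S$ commuting with every $\W(x,\xi)$ is a scalar. The Weyl operators differ from the products $M_\xi T_x$ only by unimodular scalars. So $S$ commutes with all $\W(x,\xi)$ if and only if it commutes with all modulations $M_\xi$ and all translations $T_x$, $x,\xi\in M$. I will therefore treat the two families separately: first the modulations, then the translations.

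\emph{Step 1: commuting with modulations forces a multiplication operator.} The map $\xi\mapsto\chi(\cdot,\xi)$ identifies $M$ with $\widehat M$. This is the self-duality fixed in \eqref{eq:bicharacter}. It holds because $y_\alpha\in\mathbb Q_p^\times$ and $\alpha_0\in\mathbb R^\times$, so $(t_1,\xi_1)\mapsto e_p(t_1\xi_1y_\alpha)$ and $(t_2,\xi_2)\mapsto e(t_2\xi_2/\alpha_0)$ are the standard self-dualities of $\mathbb Q_p$ and $\mathbb R$ rescaled by units. Hence the operators $M_\xi$ are exactly multiplication by all characters of $M$. By Stone--Weierstrass and Fourier uniqueness, the von Neumann algebra they generate is the maximal abelian algebra $\rm L^\infty(M)$ acting by multiplication. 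Maximality then gives $\{M_\xi\}'=\rm L^\infty(M)$. So $S$ is multiplication by some $\phi\in \rm L^\infty(M)$.

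\emph{Step 2: commuting with translations forces a constant.} The relation $T_xS=ST_x$ gives $\phi(t-x)=\phi(t)$ for almost every $t$, for each fixed $x\in M$. A translation-invariant $\rm L^\infty$ function on a locally compact group is almost everywhere constant. One way to see this is to convolve $\phi$ with an approximate identity in $\mathcal S(M)$: each convolution is a continuous invariant function, hence constant, and these constants converge to $\phi$ in the weak-$*$ sense. Alternatively, the ergodicity of Haar measure under translations gives the same conclusion. Therefore $S=cI$.

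\emph{The main obstacle.} The only nonformal point is Step 1, namely that the characters $\chi(\cdot,\xi)$ exhaust $\widehat M$ and so generate all of $\rm L^\infty(M)$. I would verify this factorwise. On $\mathbb Q_p$, the map $\xi_1\mapsto e_p(\cdot\,\xi_1y_\alpha)$ is the standard isomorphism $\mathbb Q_p\cong\widehat{\mathbb Q_p}$ precomposed with multiplication by $y_\alpha$. On $\mathbb R$ the analogous statement holds with $1/\alpha_0$. Finally, the dual of a product is the product of the duals.

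The equivalence stated in the lemma is then just Schur's lemma for unitary families. A closed invariant subspace has a projection lying in the commutant, and conversely a nonscalar commutant contains a nontrivial spectral projection.
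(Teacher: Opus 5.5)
Your proof is correct and follows the same route as the paper. Like the paper, you drop the unimodular scalars to reduce to the translations $T_x$ and modulations $M_\xi$; the difference is that the paper then just cites the standard irreducibility of the translation--modulation (Schr\"odinger) representation of a locally compact abelian group, whereas you supply its usual proof (the modulations generate the maximal abelian algebra $\mathrm{L}^\infty(M)$, and translation-invariant $\mathrm{L}^\infty$ functions are a.e.\ constant).
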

    
    \begin{proof}
        Scalar factors do not affect commutants, so an operator commuting with
        every \(\W(x,\xi)\) commutes with every translation \(T_x\) and every
        modulation \(M_\xi\).  By the standard irreducibility theorem for the translation--modulation representation of a locally compact abelian group, this representation is irreducible; see, for example, \cites{Wei64, FK98}. Hence the operator is scalar. This is the Schr\"odinger form of the Stone--von Neumann theorem for \(M\).
    \end{proof}
    
    % \subsection{The symplectic action}
    
    The group \(\mathrm{SL}_2(\mathbb Z[1/p])\) does not act on \(M\) by
    mixing its \(p\)-adic and real coordinates.  It does, however, act
    naturally on the phase space \(M\times M\). We define
    \begin{equation}
        A\cdot(x,\xi):=(ax+b\xi,cx+d\xi).
        \label{eq:symplectic-action}
    \end{equation}
    where $A=\begin{pmatrix}
          a & b\\
          c & d
          \end{pmatrix}\in\mathrm{SL}_2(\mathbb Z[1/p]) \text{ and } (x,\xi)\in M\times M.$
    All operations in \eqref{eq:symplectic-action} are performed componentwise
    in \(M\).  This is well-defined because
    \(
        \mathbb Z[1/p]\subseteq\mathbb Q_p\cap\mathbb R
    \)
    as abstract subrings of the two fields.  Since \(A^{-1}\) again has
    entries in \(\mathbb Z[1/p]\), this defines an action by automorphisms of
    \(M\times M\).
    
    We next give a complete description of the generating set of $\SL_2(\Z[1/p])$ and construct unitary implementers for those generators.
    Consider the following matrices:
    \[
    J=\begin{pmatrix}
        0 & 1\\
        -1 & 0\\
    \end{pmatrix},\quad L_r=\begin{pmatrix}
        1 & 0\\
        r & 1\\
    \end{pmatrix},\quad 
    D=\begin{pmatrix}
        p & 0\\
        0 & \frac{1}{p}
    \end{pmatrix} \in \SL_2(\Z[1/p]), \quad r\in\Z[1/p].
    \]
    
    \begin{lem}\label{lem:generators}
        The group $\SL_2(\Z[1/p])$ is generated by $J,\,L_1 \text{ and }D$.
    \end{lem}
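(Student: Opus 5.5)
The plan is to reduce to the classical fact that $\mathrm{SL}_2(\Z)$ is generated by $J$ and $L_1$, and then use $D$ to clear denominators. Write $H:=\langle J,L_1,D\rangle\le \SL_2(\Z[1/p])$. First I show that all lower shears lie in $H$. Since $JL_1J^{-1}$ is the upper shear $\begin{pmatrix}1&-1\\0&1\end{pmatrix}$, $H$ contains $\SL_2(\Z)$, the standard generating set $\{J,L_1\}$ being classical. A direct computation gives
\[
D^{-k}L_rD^{k}=\begin{pmatrix}1&0\\ p^{2k}r&1\end{pmatrix}=L_{p^{2k}r},
\]
and hence $D^{k}L_rD^{-k}=L_{r/p^{2k}}$. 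Every $r\in\Z[1/p]$ can be written as $m/p^{2k}$ with $m\in\Z$, so $L_r=D^kL_1^mD^{-k}\in H$. Conjugating by $J$ then gives all upper shears $U_s=\begin{pmatrix}1&s\\0&1\end{pmatrix}$ with $s\in\Z[1/p]$.

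Next I reduce an arbitrary $A=\begin{pmatrix}a&b\\c&d\end{pmatrix}\in\SL_2(\Z[1/p])$ to the identity by a Euclidean-type algorithm in the ring $R=\Z[1/p]$. This ring is a PID and a localisation of $\Z$, and its units are exactly $\pm p^{k}$. Left multiplication by $L_r$ and $U_s$ performs the row operations "add an $R$-multiple of one row to the other". Since $\gcd_R(a,c)=1$ (because $ad-bc=1$), I multiply $A$ by a suitable power of $D$ to make $a$ and $c$ lie in $\Z$ with $\gcd_{\Z}(a,c)$ a power of $p$. I then run the ordinary Euclidean algorithm on the first column using only the integer shears $L_1^{m}$ and $U_1^{m}$, which reduces the column to $(p^{j},0)^{T}$ up to sign. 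Multiplying by $D^{-j}$ and possibly $J^2=-I$ turns this column into $(1,0)^T$. The determinant condition then forces the matrix to be $U_s$ for some $s\in R$, and $U_s\in H$ was shown above. It follows that $A\in H$.

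The main obstacle is the bookkeeping in the middle step. Left multiplication by $D$ scales the two rows by $p$ and $p^{-1}$ respectively, not by a common factor, so I have to choose the exponent carefully to make both $a$ and $c$ integral. The cleanest approach is to apply first a shear $L_r$ or $U_s$ with $r,s\in R$ to clear denominators, for instance replacing $c$ by $c+ra$. An alternative is to argue abstractly that $R$ is a Euclidean domain, with norm given by the absolute value of the $p$-free part of a nonzero element. Then $\SL_2(R)$ is generated by elementary matrices, and every elementary matrix is one of $L_r$ or $U_s$, which already lie in $H$. This second route avoids $D$ entirely except in the conjugation step, so I would use it.
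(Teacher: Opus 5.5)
Your proposal is correct, and the route you finally choose matches the paper's proof: get every lower shear as $D^kL_1^mD^{-k}=L_{m/p^{2k}}$, conjugate by $J$ to obtain the upper shears, and use the fact that $\SL_2$ over the Euclidean domain $\Z[1/p]$ is generated by elementary matrices. Your explicit Euclidean function (absolute value of the $p$-free part) is a detail the paper leaves implicit, and the hand-run Euclidean algorithm you sketch first is not needed.
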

    
    \begin{proof}
        Put \(R=\mathbb Z[1/p]\).  Since \(R\) is a Euclidean domain,
        \(\mathrm{SL}_2(R)\) is generated by its elementary matrices.  Now
        \(L_m=L_1^m\) for \(m\in\mathbb Z\), and
        \[
            D^n L_m D^{-n}=L_{m/p^{2n}}.
        \]
        Every \(r\in R\) can be written as \(m/p^{2n}\) for suitable
        \(m\in\mathbb Z\) and \(n\geq 0\); if necessary, one multiplies the
        numerator and denominator by \(p\).  It follows that every lower
        elementary matrix \(L_r\) belongs to
        \(\langle J,L_1,D\rangle\).  Finally,
        \[
            J L_r J^{-1}
            =\begin{pmatrix}1&-r\\0&1\end{pmatrix},
        \]
        so all upper elementary matrices belong to this subgroup as well.
    \end{proof}
    We now define the unitary operators on $\mathcal S(M)$ corresponding to the matrices $J,\,L_r$, and $D$:
    \begin{itemize}
        \item [(1)] Define the Fourier operator \(S_J\) by
                    \begin{equation}
                        (S_Jf)(t):=\int_M\overline{\chi(h,t)}f(h)\,dh.
                        \label{eq:fourier-operator}
                    \end{equation}
                    In coordinates,
                    \[
                        (S_Jf)(t_1,t_2)
                        =\int_{\mathbb Q_p\times\mathbb R}
                          e_p(-h_1t_1y_\alpha)
                          e\!\left(-\frac{h_2t_2}{\alpha_0}\right)
                          f(h_1,h_2)\,dh_1\,dh_2.
                    \]
        \item[(2)] For \(r\in\mathbb Z[1/p]\), we define the quadratic character                        $C_r(t):=\chi\!\left(t,\frac r2t\right)$
                    and the multiplication operator
                    \begin{equation}
                        (S_{L_r}f)(t):=C_r(t)f(t).
                        \label{eq:shear-operator}
                    \end{equation}
                    Explicitly,
                    \[
                        (S_{L_r}f)(t_1,t_2)
                        =e_p\!\left(\frac{r y_\alpha t_1^2}{2}\right)
                         e\!\left(\frac{r t_2^2}{2\alpha_0}\right) f(t_1,t_2).
                    \]
        \item[(3)] Define the operator $S_{D}$
                    \begin{equation}
                        (S_{D}f)(t_1,t_2):=f\left(t_1.\frac{1}{p},\frac{t_2}{p}\right).
                        \label{eq:diagonal-operator}
                    \end{equation}
    \end{itemize}
    Each of these operators extends uniquely to a unitary operator on
    \(\rm L^2(M)\). Indeed, \(S_J\) is unitary by Plancherel's theorem for the
    self-dual Haar measure. The operator \(S_{L_r}\) is unitary because
    \(C_r\) is a unimodular function. Finally, \(S_D\) is unitary because the
    change of variables
    \[
        (t_1,t_2)=p(u_1,u_2)
    \]
    preserves the product Haar measure on \(\mathbb Q_p\times\mathbb R\):
    \[
        d(pu_1)\,d(pu_2)
        =
        |p|_p\,|p|_\infty\,du_1\,du_2
        =
        p^{-1}\cdot p\,du_1\,du_2
        =
        du_1\,du_2.
    \]
    Hence
    \[
        \|S_Df\|_2^2
        =
        \int_M \left|f\left(\frac{t}{p}\right)\right|^2\,dt
        =
        \int_M |f(u)|^2\,du
        =
        \|f\|_2^2.
    \]
    Since \(\mathcal S(M)\) is dense in \(\rm L^2(M)\), these isometries extend
    uniquely to unitary operators on \(\rm L^2(M)\).
    
    \begin{prp}
        \label{prop:fourier-covariance}
        For every \(x,\xi\in M\), and $r\in\Z[1/p]$, we have
        \begin{itemize}
            \item[(i)] $S_J\W(x,\xi)S_J^{-1}=\W(\xi,-x)
            =\W\bigl(J\cdot(x,\xi)\bigr).$
            \item[(ii)] $S_{L_r}\W(x,\xi)S_{L_r}^{-1}
            =\W(x,\xi+rx)
            =\W\bigl(L_r\cdot(x,\xi)\bigr).$
            \item[(iii)] $S_{D}\W(x,\xi)S_{D}^{-1}
            =\W(px,p^{-1}\xi)
            =\W\bigl(D\cdot(x,\xi)\bigr).$ 
        \end{itemize}
    \end{prp}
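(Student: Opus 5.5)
Each of the three identities is verified by computing both sides on a Schwartz function $f\in\mathcal S(M)$ at a point $t\in M$, using the coordinate-free formula \eqref{eq:weyl-coordinate-free}, $(\W(x,\xi)f)(t)=\chi(t-\tfrac x2,\xi)f(t-x)$, together with bicharacter properties and symmetry of $\chi$. Since all operators involved are unitary and $\mathcal S(M)$ is dense and preserved by them, the identities then extend to $\rm L^2(M)$. In each case it is easiest to prove the equivalent form $S\,\W(x,\xi)=\W(A\cdot(x,\xi))\,S$, which avoids writing the inverses explicitly.

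For (ii) the computation is purely algebraic. We have
\[
(S_{L_r}\W(x,\xi)f)(t)=C_r(t)\,\chi(t-\tfrac x2,\xi)f(t-x),
\qquad
(\W(x,\xi+rx)S_{L_r}f)(t)=\chi(t-\tfrac x2,\xi+rx)\,C_r(t-x)f(t-x).
\]
Expand $C_r(t-x)=\chi(t-x,\tfrac r2(t-x))$ by bilinearity and symmetry:
\[
C_r(t-x)=C_r(t)\,\overline{\chi(t,rx)}\,\chi(x,\tfrac r2 x).
\]
Also $\chi(t-\tfrac x2,rx)=\chi(t,rx)\,\overline{\chi(x,\tfrac r2x)}$. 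These extra factors cancel, so both sides agree. Here $r\in\mathbb Z[1/p]$ acts on $M$ diagonally, and $\chi(u,rv)=\chi(ru,v)$ because the scalar multiplies both the $p$-adic and the real coordinate.

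For (iii), set $g=S_Df$, so $g(t)=f(t/p)$. Then
\[
(S_D\W(x,\xi)f)(t)=\chi(\tfrac tp-\tfrac x2,\xi)\,f(\tfrac tp-x),
\qquad
(\W(px,\xi/p)S_Df)(t)=\chi(t-\tfrac{px}{2},\tfrac{\xi}{p})\,f(\tfrac{t-px}{p}).
\]
These agree because $\chi(pu,v/p)=\chi(u,v)$, again since scalars in $\mathbb Z[1/p]$ pass through both factors of $\chi$.

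Part (i) is the analytic step and the main obstacle. Write $\widehat f(t)=\int_M\overline{\chi(h,t)}f(h)\,dh$. First compute $S_JT_x=M_{-x}S_J$ and $S_JM_\xi=T_\xi S_J$; each is a substitution $h\mapsto h+x$ in the integral, using invariance of Haar measure and $\chi(h,\xi)\overline{\chi(h,t)}=\overline{\chi(h,t-\xi)}$. Then
\[
S_J\W(x,\xi)=\chi(-\tfrac x2,\xi)\,S_JM_\xi T_x=\chi(-\tfrac x2,\xi)\,T_\xi M_{-x}S_J .
\]
Using Lemma~\ref{lem:canonical-commutation}, $T_\xi M_{-x}=\overline{\chi(\xi,-x)}\,M_{-x}T_\xi$. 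It remains to check that the scalar equals the Weyl prefactor of $\W(\xi,-x)=\chi(-\tfrac\xi2,-x)M_{-x}T_\xi$, that is,
\[
\chi(-\tfrac x2,\xi)\,\chi(\xi,x)=\chi(\tfrac\xi2,x).
\]
This holds by bilinearity and symmetry, since $\chi(\xi,x)\chi(x,\xi)^{-1/2}=\chi(\xi,x)^{1/2}$, where the half-powers are interpreted via $x/2\in M$.

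The only genuine analytic input is that $S_J$ preserves $\mathcal S(M)$ and is unitary for the self-dual measure. This is already recorded before the proposition, so that (i) holds on a dense subspace and extends by continuity. Finally, the second equalities in (i)--(iii) are read off from \eqref{eq:symplectic-action} with the given matrices $J$, $L_r$, $D$.
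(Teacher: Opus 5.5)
Your proposal is correct and is essentially the paper's proof. Part (i) uses the same intertwinings $S_JT_x=M_{-x}S_J$ and $S_JM_\xi=T_\xi S_J$, then the canonical commutation relation and the scalar identity $\chi(-\tfrac x2,\xi)\chi(\xi,x)=\chi(\tfrac\xi2,x)$; for (ii) and (iii) you check the intertwining pointwise in one step rather than conjugating $T_x$ and $M_\xi$ separately, but the underlying computations (the expansion of $C_r(t)\overline{C_r(t-x)}$ and $\chi(pu,p^{-1}v)=\chi(u,v)$) are the same.
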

    
    \begin{proof}
        \begin{itemize}
            \item[(i)] For \(f\in\mathcal S(M)\),
                    \[
                    \begin{aligned}
                    (S_JT_xf)(t)
                    &=\int_M\overline{\chi(h,t)}f(h-x)\,dh =\overline{\chi(x,t)}
                      \int_M\overline{\chi(u,t)}f(u)\,du \\
                    &=\chi(t,-x)(S_Jf)(t)
                    =(M_{-x}S_Jf)(t),
                    \end{aligned}
                    \]
                    where we used \(u=h-x\) and the symmetry of \(\chi\). Hence
                    \[
                    S_JT_x=M_{-x}S_J.
                    \]
                    Similarly,
                    \[
                    \begin{aligned}
                    (S_JM_\xi f)(t)
                    &=\int_M\overline{\chi(h,t)}\chi(h,\xi)f(h)\,dh=\int_M\overline{\chi(h,t-\xi)}f(h)\,dh \\
                    &=(S_Jf)(t-\xi)
                    =(T_\xi S_Jf)(t).
                    \end{aligned}
                    \]
                    Therefore,
                    \[
                    S_JM_\xi=T_\xi S_J.
                    \]
        Consequently,
        \[
        \begin{aligned}
            S_JW(x,\xi)S_J^{-1}
            &=\chi\!\left(-\frac{x}{2},\xi\right)T_\xi M_{-x}=\chi\!\left(-\frac{x}{2},\xi\right)
              \chi(\xi,x)M_{-x}T_\xi\\
            &=\chi\!\left(\frac{\xi}{2},x\right)M_{-x}T_\xi=W(\xi,-x),
        \end{aligned}
        \]
        where symmetry of \(\chi\) was used in the third equality.
        \item[(ii)]  Since \(S_{L_r}\) and \(M_\xi\) are multiplication operators,
        \[
            S_{L_r}M_\xi S_{L_r}^{-1}=M_\xi.
        \]
        Moreover, symmetry and bicharacterity of \(\chi\) give
        \[
        \begin{aligned}
            C_r(t)C_r(t-x)^{-1}
            &=\chi\!\left(t,\frac r2t\right)
              \overline{\chi\!\left(t-x,\frac r2(t-x)\right)}=\chi(t,rx)\chi\!\left(-\frac{x}{2},rx\right).
        \end{aligned}
        \]
        Hence
        \[
            S_{L_r}T_xS_{L_r}^{-1}
            =\chi\!\left(-\frac{x}{2},rx\right)M_{rx}T_x.
        \]
        Combining these identities with \eqref{eq:weyl-definition}, we obtain
        \[
        \begin{aligned}
            S_{L_r}W(x,\xi)S_{L_r}^{-1}
            &=\chi\!\left(-\frac{x}{2},\xi\right)
              \chi\!\left(-\frac{x}{2},rx\right)
              M_{\xi+rx}T_x=\chi\!\left(-\frac{x}{2},\xi+rx\right)
              M_{\xi+rx}T_x\\
            &=W(x,\xi+rx).
        \end{aligned}
        \]
        \item[(iii)] Directly from the definitions,
        \[
            S_{D}T_xS_{D}^{-1}=T_{px},
            \qquad
            S_{D}M_\xi S_{D}^{-1}=M_{p^{-1}\xi}.
        \]
        In addition, bicharacterity gives
        \[
            \chi\!\left(-\frac{px}{2},p^{-1}\xi\right)
            =\chi\!\left(-\frac{x}{2},\xi\right).
        \]
        Therefore
        \[
        \begin{aligned}
            S_{D}W(x,\xi)S_{D}^{-1}
            &=\chi\!\left(-\frac{x}{2},\xi\right)
              M_{p^{-1}\xi}T_{px}=\chi\!\left(-\frac{px}{2},p^{-1}\xi\right)
              M_{p^{-1}\xi}T_{px}=W(px,p^{-1}\xi).
        \end{aligned}
        \]
        \end{itemize}
        This completes the proof.
    \end{proof}
    
\subsection{The associated projective representation}
    
    Proposition~\ref{prop:fourier-covariance} provides unitary implementers for generators \(J,\, L_1\), and \(D\); whereas the inverse generators are implemented by the corresponding inverse unitaries.
    
    Let $A\in\SL_2(\Z[1/p])$. Choose a word $A=A_1A_2\cdots A_k$ where each $A_i\in\{J,\,L_1,\,D\}$ and its inverses, and define
    \begin{equation}
        S_{A}:=S_{A_1}S_{A_2}\cdots S_{A_k}.
        \label{eq:word-implementer}
    \end{equation}
    Repeated application of covariance gives
    \begin{equation}
        S_A\W(z)S_A^{-1}
        =\W(Az),
        \qquad z\in M\times M.
        \label{eq:word-covariance}
    \end{equation}
    
    \begin{lem}
        \label{lem:implementer-uniqueness}
        Suppose \(U,V\in U(\mathcal H)\) satisfy
        \[
            U\W(z)U^{-1}=V\W(z)V^{-1}
        \]
        for all $z\in M\times M$. Then \(U=\lambda V\) for some \(\lambda\in\mathbb T\).
    \end{lem}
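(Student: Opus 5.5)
The plan is to reduce the statement to the irreducibility of the Weyl system, Lemma~\ref{lem:weyl-irreducible}, via Schur's lemma. Put $X:=V^{-1}U$, which is a unitary on $\mathcal H$. Starting from the hypothesis $U\W(z)U^{-1}=V\W(z)V^{-1}$, multiply on the left by $V^{-1}$ and on the right by $U$. This gives
\[
V^{-1}U\,\W(z)=\W(z)\,V^{-1}U,
\]
that is, $X\W(z)=\W(z)X$ for every $z=(x,\xi)\in M\times M$. So $X$ lies in the commutant $\{\W(x,\xi):x,\xi\in M\}'$.

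By Lemma~\ref{lem:weyl-irreducible} this commutant is $\mathbb C I$, so $X=\lambda I$ for some $\lambda\in\mathbb C$. Since $X$ is unitary, $|\lambda|^2 I=X^*X=I$, hence $\lambda\in\mathbb T$. Then $U=VX=\lambda V$, which is the claim.

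The argument has no real obstacle. The only point needing care is that the hypothesis is an identity between bounded operators on all of $\mathcal H$, not merely on $\mathcal S(M)$; the statement assumes this, since $U,V\in U(\mathcal H)$. Irreducibility is used in the strong form, that the commutant is the scalars, which is exactly what Lemma~\ref{lem:weyl-irreducible} supplies via the Stone--von Neumann theorem for $M$. As a consequence, the implementers $S_A$ in \eqref{eq:word-implementer} are independent of the chosen word up to a scalar in $\mathbb T$, so $A\mapsto S_A$ is a projective unitary representation.
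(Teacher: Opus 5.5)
Your proposal is correct and follows the paper's proof essentially verbatim: you show that $V^{-1}U$ commutes with every $\W(z)$, use Lemma~\ref{lem:weyl-irreducible} to conclude that it is a scalar, and use unitarity to get $|\lambda|=1$. Your explicit algebra, multiplying by $V^{-1}$ on the left and by $U$ on the right, just spells out the step that the paper leaves implicit.
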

    
    \begin{proof}
        The identity
        \[
            U\W(z)U^{-1}=V\W(z)V^{-1}
        \]implies that \(V^{-1}U\) commutes with every Weyl operator. By Lemma~\ref{lem:weyl-irreducible}, it follows that \(V^{-1}U=\lambda I\) for some \(\lambda\in\mathbb T\).  Since \(U\) and \(V\) are unitary,
        \(|\lambda|=1\).
    \end{proof}
    The projective unitary group of $\mathcal{H}$ is defined by $PU(\mathcal H):=U(\mathcal H)/\mathbb T I,$ where $U(\mathcal H)$ is the group of unitary operators on $\mathcal H$, and $\mathbb T I:=\{\lambda I:\lambda\in\mathbb T\}$
    is its central subgroup of scalar unitaries. Thus two unitaries $U,V\in U(\mathcal H)$ determine the same projective class if and only if $U=\lambda V$
    for some $\lambda\in\T$. The projective class of $U$ is denoted
    by $[U]:=\{\lambda U:\lambda\in\mathbb T\}.$
    Multiplication in $PU(\mathcal H)$ is given by $[U][V]:=[UV].$
    This is well-defined because scalar unitaries lie in the centre of $U(\mathcal {H})$.
    
    \begin{thm}\label{thm:projective-metaplectic}
        For each $A\in\SL_2(\Z[1/p])$, choose any word representing $A$ and let $S_A$ be the corresponding unitary \eqref{eq:word-implementer}.
        Then the projective class $[S_A]$ is independent of the chosen word,
        and
        \begin{equation}
            \pi:\SL_2(\Z[1/p])\longrightarrow PU(\mathcal H),
            \qquad
            \pi(A):=[S_A],
            \label{eq:projective-representation}
        \end{equation}
        is a group homomorphism. Moreover, for every \(A\in\mathrm{SL}_2(\mathbb{Z}[1/p])\) and every \(z\in M\times M\), we have
        \begin{equation}
            S_A\W(z)S_A^{-1}=\W(Az).
            \label{eq:general-covariance}
        \end{equation}
    \end{thm}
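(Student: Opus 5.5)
The plan is to derive everything from the covariance identity \eqref{eq:word-covariance} together with Lemma~\ref{lem:implementer-uniqueness}. First I will establish \eqref{eq:general-covariance} for an arbitrary word. For a generator $A_i\in\{J,L_1,D\}$ this is Proposition~\ref{prop:fourier-covariance}. For an inverse generator, conjugating $S_{A_i}\W(z)S_{A_i}^{-1}=\W(A_iz)$ by $S_{A_i}^{-1}$ and substituting $z\mapsto A_i^{-1}z$ gives $S_{A_i}^{-1}\W(z)S_{A_i}=\W(A_i^{-1}z)$. An induction on the word length $k$ then yields, for $A=A_1\cdots A_k$,
\[
S_{A_1}\cdots S_{A_k}\W(z)S_{A_k}^{-1}\cdots S_{A_1}^{-1}=\W(A_1\cdots A_kz),
\]
because at each step the inner conjugation produces $\W(A_2\cdots A_k z)$, which is again a Weyl operator, and the outer generator then acts on it. The action \eqref{eq:symplectic-action} is a left action by automorphisms of $M\times M$, so the composite is the action of the product matrix. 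This proves \eqref{eq:general-covariance} for $S_A$ built from any chosen word.

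Next comes independence of the word. Suppose two words represent the same $A$, with implementers $S_A$ and $S_A'$. By the previous step, $S_A\W(z)S_A^{-1}=\W(Az)=S_A'\W(z)S_A'^{-1}$ for all $z\in M\times M$. Lemma~\ref{lem:implementer-uniqueness}, which rests on the irreducibility in Lemma~\ref{lem:weyl-irreducible}, then gives $S_A=\lambda S_A'$ with $\lambda\in\T$. Hence $[S_A]=[S_A']$, and $\pi$ is well defined. Lemma~\ref{lem:generators} guarantees that every $A$ admits at least one such word.

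For the homomorphism property, take words for $A$ and $B$. Their concatenation is a word for $AB$, and the associated unitary is exactly $S_AS_B$ by \eqref{eq:word-implementer}. By well-definedness, $\pi(AB)=[S_AS_B]=[S_A][S_B]=\pi(A)\pi(B)$. The identity matrix corresponds to the empty word, so $\pi(I)=[I]$.

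I expect the main subtlety to be not in these formal steps but in making sure the ingredients are sound. Proposition~\ref{prop:fourier-covariance}(i) must hold as written, which depends on the self-dual normalisation making $S_J$ unitary, so that $S_J^{-1}$ is its adjoint. The identity $\W(\xi,-x)=\W(J(x,\xi))$ must also match the sign convention in \eqref{eq:symplectic-action}. The identities are first verified on $\mathcal S(M)$ and extend to all of $\mathcal H$ by density and boundedness, since all operators involved are unitary. I would check these sign conventions explicitly for $J$ before running the induction. Everything else is formal.
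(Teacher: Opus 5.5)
Your proposal is correct and follows essentially the paper's argument. You establish covariance for arbitrary words (the paper gets this by repeated application of the generator covariances; you spell it out by induction, including inverse letters) and then obtain well-definedness of $[S_A]$ from irreducibility via the implementer-uniqueness lemma. The only minor difference is in the homomorphism property: you get it from concatenation of words plus well-definedness, whereas the paper applies the uniqueness lemma a second time to $S_AS_B$ and $S_{AB}$ to produce the scalar $c(A,B)$. Both routes are valid.
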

    
    \begin{proof}
        Let two words for \(A\) produce unitaries \(U\) and \(V\).  By
        \eqref{eq:word-covariance}, both implement the same transformation of
        the Weyl system.  Lemma~\ref{lem:implementer-uniqueness} gives
        \(U=\lambda V\) for some \(\lambda\in\mathbb T\).  Hence \([S_A]\)
        is well-defined.
    
        Now let \(A,B\in\SL_2(\Z[1/p])\).  From \eqref{eq:general-covariance},
        \[
        \begin{aligned}
            (S_AS_B)\W(z)(S_AS_B)^{-1}
            &=S_A\W(Bz)S_A^{-1}\\
            &=\W(ABz).
        \end{aligned}
        \]
        Thus \(S_AS_B\) and \(S_{AB}\) implement the same transformation.
        Lemma~\ref{lem:implementer-uniqueness} supplies a scalar
        \(c(A,B)\in\mathbb T\) such that
        \begin{equation}
            S_AS_B=c(A,B)S_{AB}.
            \label{eq:multiplier}
        \end{equation}
        Passing to projective classes gives
        \[
            \pi(A)\pi(B)=[S_AS_B]=[S_{AB}]=\pi(AB),
        \] proving the assertion.
    \end{proof}

%%%%%%%%%%%%%%%%%%%%%%%%%%%%%%%%%%%%%%%%%%%%%%%%%%%%%%%%%%%%%%%%%%%%%%%%%%%%%%%%%%%%%%%%%%%%%%%%%%%%%%%%%%%%%%%%%%%%%%%%%%%%%%%%%%%%%%%%%%%%%%%%%%%%%%%%%%%%%%%%%%%%%%%%%%%%%%%%%%%%%%%%%%%%%%%%%%%%%%%%%%%%%%%%%%%%%%%%%%%%%%%%%%%%%%%%%%%%%%%%%%%%%%%%%%%%%%%%%%%%%%%%

\section{Morita equivalence for crossed products by \texorpdfstring{$\Z$}{Z}} \label{Morita eq}

    In this section, we investigate projective modules over crossed products of noncommutative solenoids and apply them to the study of Morita equivalence. Building on the $\rm{SL}_2\left(\Z[1/p]\right)$-action and the crossed product constructions developed in the previous section, we extend Rieffel's Heisenberg bimodule framework to the crossed product setting. Our main objective is to construct equivalence bimodules between suitable crossed product $\rm{C}^*$-algebras and to deduce Morita equivalence from these constructions.

    The main tool we use is the following theorem of Combes~\cite{Combes84} and Curto--Muhly--Williams~\cite{Curto84}. Roughly speaking, the theorem asserts that if two $\rm{C}^*$-algebras $\mathcal{A}$ and $\mathcal{B}$ are Morita equivalent through a bimodule $X$, and if a group $G$ acts on $\mathcal{A}$ and $\mathcal{B}$, then the crossed products $\mathcal{A}\rtimes G$ and $\mathcal{B}\rtimes G$ are again Morita equivalent, provided that there exists a compatible action of $G$ on $X$. For a more categorical formulation of this result, see~\cite{EKQR06}.
    \begin{thm}
    Let $\mathcal{A},\mathcal{B}$ be $\rm{C^*}$-algebras, $G$ be a locally compact group, and $\af: G \to Aut(\mathcal{A})$ and $\bt: G \to Aut(\mathcal{B})$ be continuous group actions. Suppose that there exists a $\mathcal{B}-\mathcal{A}$ bimodule $E$ and a strongly continuous action of $G$ on $E$, denoted by $\{\tau_g\}_{g\in G}$, such that for all $x,y\in E$ and $g\in G$,
    \begin{enumerate}
            \item[(i)] $\langle\tau_g(x),\tau_g(y)\rangle_\mathcal{A}=\af_g(\langle x,y\rangle_\mathcal{A}),$ and 
            \item[(ii)] $\prescript{}{\mathcal{B}}\langle\tau_g(x),\tau_g(y)\rangle=\bt_g(\prescript{}{\mathcal{B}}\langle x,y\rangle).$
    \end{enumerate}
    Then the crossed products $\mathcal{A}\rtimes_{\af} G$ and $\mathcal{B}\rtimes_{\bt} G$ are Morita equivalent.
    \end{thm}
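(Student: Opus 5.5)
The plan is to use the linking-algebra method. From the bimodule $E$ we build one $\mathrm C^*$-algebra carrying a single $G$-action that contains both $(\mathcal B,\beta)$ and $(\mathcal A,\alpha)$ as invariant complementary full corners. We then show that passing to crossed products preserves this corner structure. Since complementary full corners of a $\mathrm C^*$-algebra are Morita equivalent (Brown--Green--Rieffel), this gives the conclusion. The argument is uniform in the locally compact group $G$.

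\emph{Step 1 (the maps $\tau_g$ respect the module structure).} Write $\widetilde E$ for the conjugate module of $E$. We first show that
\[
\tau_g(x\cdot a)=\tau_g(x)\cdot\alpha_g(a)
\qquad\text{and}\qquad
\tau_g(b\cdot x)=\beta_g(b)\cdot\tau_g(x).
\]
For any $y\in E$, condition (i) gives
\[
\langle \tau_g(y),\tau_g(xa)\rangle_{\mathcal A}=\alpha_g(\langle y,x\rangle_{\mathcal A}\, a)=\langle\tau_g(y),\tau_g(x)\alpha_g(a)\rangle_{\mathcal A}.
\]
Each $\tau_g$ is bijective, because $\tau_{g^{-1}}$ is its inverse, so $\tau_g(y)$ ranges over all of $E$. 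Nondegeneracy of the inner product then forces the first identity. The left identity follows in the same way from (ii).

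\emph{Step 2 (the action on the linking algebra).} Let $L$ be the linking algebra
\[
L=\begin{pmatrix}\mathcal B & E\\ \widetilde E & \mathcal A\end{pmatrix}.
\]
Define $\gamma_g$ on $L$ entrywise by $\beta_g$, $\tau_g$, $\widetilde{\tau_g}$ and $\alpha_g$. Step 1, together with (i) and (ii), shows that $\gamma_g$ is a $*$-homomorphism. Indeed, the products in $L$ are built from the module actions and the two inner products, and each of these is intertwined by the maps above. Moreover $\gamma_g\gamma_h=\gamma_{gh}$ and $\gamma_e=\mathrm{id}$, so $\gamma$ is an action by automorphisms.

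Strong continuity of $g\mapsto\gamma_g(\ell)$ follows entrywise from continuity of $\alpha$, $\beta$ and $\tau$. Here we use that the norm on $L$ is equivalent to the maximum of the entry norms, and that the norm on $E$ is the one induced by $\langle\cdot,\cdot\rangle_{\mathcal A}$.

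The projection $p=\mathrm{diag}(1,0)\in M(L)$ is $\gamma$-invariant, and so is $1-p$. We have $pLp=\mathcal B$ and $(1-p)L(1-p)=\mathcal A$, with the restricted actions $\beta$ and $\alpha$. Both corners are full, because $E$ is an imprimitivity bimodule: $\overline{\mathrm{span}}\,\langle E,E\rangle_{\mathcal A}=\mathcal A$ and the analogous statement holds for $\mathcal B$.

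\emph{Step 3 (corners of the crossed product).} Because $p$ is $\gamma$-invariant, it defines a projection $\bar p\in M(L\rtimes_\gamma G)$, acting pointwise on $C_c(G,L)$. By construction
\[
\bar p\, C_c(G,L)\,\bar p=C_c(G,\mathcal B),
\]
with exactly the convolution and involution of $\mathcal B\rtimes_\beta G$.

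The key step, and the main obstacle, is to prove that the universal norm of $L\rtimes G$ restricts on $C_c(G,\mathcal B)$ to the universal norm of $\mathcal B\rtimes G$. The inequality $\|\cdot\|_{L\rtimes G}\le\|\cdot\|_{\mathcal B\rtimes G}$ holds because every covariant representation of $(L,G)$ restricts to one of $(\mathcal B,G)$ on the range of $p$.

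For the reverse inequality, start with a covariant representation $(\pi,U)$ of $(\mathcal B,G)$ on $H$. Form the Hilbert space
\[
H'=H\oplus(\widetilde E\otimes_{\mathcal B}H),
\]
with $L$ acting by matrix multiplication and $G$ acting by $U\oplus(\widetilde{\tau}\otimes U)$. Covariance follows from Step 1. Strong continuity is checked on elementary tensors using continuity of $\tau$, which is where the hypothesis on $\{\tau_g\}$ is needed for a general locally compact $G$. The compression of the resulting integrated form to $H$ recovers $\pi\rtimes U$. Hence the two norms agree, and
\[
\bar p(L\rtimes G)\bar p\cong\mathcal B\rtimes_\beta G.
\]
The same argument gives
\[
(1-\bar p)(L\rtimes G)(1-\bar p)\cong\mathcal A\rtimes_\alpha G.
\]

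\emph{Step 4 (fullness and conclusion).} Fullness of $\bar p$ follows from density of $LpL$ in $L$. Products of elements $f\otimes\ell_1 p\ell_2$ with $f\in C_c(G)$ span a dense subspace of $C_c(G,L)$ in the inductive-limit topology. Such a product can be approximated by elements of $C_c(G,L)\,\bar p\,C_c(G,L)$, using an approximate unit in $C_c(G)$ concentrated near $e$. The same holds for $1-\bar p$. Therefore $\mathcal B\rtimes_\beta G$ and $\mathcal A\rtimes_\alpha G$ are complementary full corners of $L\rtimes_\gamma G$. The bimodule $\bar p(L\rtimes G)(1-\bar p)$, which is the completion of $C_c(G,E)$, implements the Morita equivalence.
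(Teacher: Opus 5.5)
Your proposal is correct, and it is the standard linking-algebra proof of the Combes/Curto--Muhly--Williams theorem: the $\gamma$-invariant full projection $p=\mathrm{diag}(1,0)\in M(L)$ passes to complementary full corners of $L\rtimes_\gamma G$, identified with $\mathcal B\rtimes_\beta G$ and $\mathcal A\rtimes_\alpha G$. The paper gives no argument of its own and simply cites that theorem, so your route matches the cited proof, and you rightly read $E$ as an imprimitivity bimodule, which the paper's statement leaves implicit.
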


    \begin{proof}
        See~\cite{Curto84}*{Theorem~1} and~\cite{Combes84}*{p.299, Theorem}.
    \end{proof}
    
    A standard completion argument shows that, in the above theorem, it is enough to have a $G$-action on a pre-imprimitivity bimodule linking dense $*$-subalgebras of $\mathcal{A}$ and $\mathcal{B}$. More precisely, we have the following proposition.
    
    \begin{prp}\label{M.E of tori}
        Let $\mathcal{A},\mathcal{B}$ be $\rm{C^*}$-algebras, $G$ be a locally compact group, and $\af:G\to Aut(\mathcal{A})$ and $\bt:G \to Aut(\mathcal{B})$ be continuous group actions. Suppose that there exist dense $*$-subalgebras $\mathcal{A}_0\subseteq \mathcal{A}$ and $\mathcal{B}_0\subseteq \mathcal{B}$, a $\mathcal{B}_0-\mathcal{A}_0$ pre-imprimitivity bimodule $E_0$, and a strongly continuous action of $G$ on $E_0$, denoted by $\{\tau_g\}_{g\in G}$, such that for all $x,y\in E_0$ and $g\in G$, we have
         \begin{enumerate}
            \item[(i)] $\langle\tau_g(x),\tau_g(y)\rangle_\mathcal{A}=\af_g(\langle x,y\rangle_\mathcal{A}),$ and 
            \item[(ii)] $\prescript{}{\mathcal{B}}\langle\tau_g(x),\tau_g(y)\rangle=\bt_g(\prescript{}{\mathcal{B}}\langle x,y\rangle).$
        \end{enumerate}
        Then the crossed products $\mathcal{A}\rtimes_{\af} G$ and $\mathcal{B}\rtimes_{\bt} G$ are Morita equivalent.
    \end{prp}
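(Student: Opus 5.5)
The plan is to extend the data by continuity from the dense objects to their completions, and then apply the preceding theorem of Combes and Curto--Muhly--Williams. Let $E$ be the completion of $E_0$ in the norm $\|x\|_{\mathcal A}:=\|\langle x,x\rangle_{\mathcal A}\|^{1/2}$. By Rieffel's standard result on pre-imprimitivity bimodules, $E$ is a $\mathcal B$--$\mathcal A$ imprimitivity bimodule. In this completion the actions of $\mathcal A_0$ and $\mathcal B_0$, and both inner products, extend continuously to $\mathcal A$, $\mathcal B$ and $E$. Moreover, the two norms $\|\langle x,x\rangle_{\mathcal A}\|^{1/2}$ and $\|{}_{\mathcal B}\langle x,x\rangle\|^{1/2}$ agree on $E_0$.

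The key step is to extend each $\tau_g$ to $E$. For $x\in E_0$, condition (i) gives
\[
\|\tau_g(x)\|^2=\|\alpha_g(\langle x,x\rangle_{\mathcal A})\|=\|\langle x,x\rangle_{\mathcal A}\|=\|x\|^2 ,
\]
because $\alpha_g$ is a $*$-automorphism and hence isometric. So $\tau_g$ is a linear isometry of $E_0$ and extends uniquely to an isometry $\overline{\tau}_g$ of $E$. The group law $\tau_g\tau_h=\tau_{gh}$ holds on $E_0$, so it passes to $E$ by density. Consequently each $\overline{\tau}_g$ is surjective, with inverse $\overline{\tau}_{g^{-1}}$. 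Conditions (i) and (ii) extend to all $x,y\in E$ because both sides are jointly continuous in $(x,y)$: the inner products are continuous, and $\alpha_g,\beta_g$ are continuous.

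Next I check strong continuity of $g\mapsto\overline{\tau}_g(x)$ for $x\in E$. Strong continuity on $E_0$ is part of the hypothesis; I read it with respect to the norm of $E$, so that it is meaningful. Now take $x\in E$ and $\epsilon>0$, and pick $x_0\in E_0$ with $\|x-x_0\|<\epsilon$. Since every $\overline{\tau}_g$ is an isometry,
\[
\|\overline{\tau}_g x-\overline{\tau}_h x\|\le 2\epsilon+\|\tau_g x_0-\tau_h x_0\| .
\]
This gives continuity by an $\epsilon/3$ argument. Compatibility with the module actions is not needed separately, because the theorem only asks for (i) and (ii). If a version of the theorem also requires $\overline{\tau}_g(x\cdot a)=\overline{\tau}_g(x)\alpha_g(a)$, I would derive it from (i) and the fact that $E$ is full. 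Concretely, I would show that
\[
\bigl\langle \overline{\tau}_g(x\cdot a)-\overline{\tau}_g(x)\alpha_g(a),\ \overline{\tau}_g(y)\bigr\rangle_{\mathcal A}=0
\]
for all $y$, using $\langle x\cdot a,y\rangle_{\mathcal A}=a^*\langle x,y\rangle_{\mathcal A}$. Since $\overline{\tau}_g$ is surjective, this forces the difference to vanish. The left action is handled in the same way.

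With these steps, $E$ and $\{\overline{\tau}_g\}$ satisfy the hypotheses of the preceding theorem, and the Morita equivalence $\mathcal A\rtimes_\alpha G\sim_{\mathrm{M.E.}}\mathcal B\rtimes_\beta G$ follows. I expect the only delicate point to be the continuity bookkeeping: the extension of the inner-product identities and the strong continuity of $\overline{\tau}$. Both reduce to the isometry estimate above.
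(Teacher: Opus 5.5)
Your proposal is correct and follows the same route as the paper. You complete $E_0$ to a $\mathcal B$--$\mathcal A$ imprimitivity bimodule, use (i) to show each $\tau_g$ is isometric, extend by continuity, transfer (i), (ii) and strong continuity by density, and then invoke the Combes/Curto--Muhly--Williams theorem. You also make explicit the group law and surjectivity of the extended maps $\overline{\tau}_g$, and the compatibility with the module actions (where surjectivity of $\overline{\tau}_g$, rather than fullness, is what does the work), which the paper leaves implicit.
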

    \begin{proof}
    Since $E_0$ is a pre-imprimitivity bimodule linking the dense $*$-subalgebras $\mathcal{B}_0$ and $\mathcal{A}_0$, a standard completion argument shows that $E_0$ completes to a $\mathcal{B}-\mathcal{A}$ imprimitivity bimodule $E$. It therefore suffices to show that the given action $\{\tau_g\}_{g\in G}$ on $E_0$ extends to a strongly continuous action on $E$ satisfying the hypotheses of the previous theorem.
    
    For $x\in E_0$ and $g\in G$, conditions (i) and (ii) imply that
    \[
    \|\tau_g(x)\|_E^2
    =
    \|\langle \tau_g(x),\tau_g(x)\rangle_{\mathcal A}\|
    =
    \|\af_g(\langle x,x\rangle_{\mathcal A})\|
    =
    \|\langle x,x\rangle_{\mathcal A}\|
    =
    \|x\|_E^2.
    \]
    Hence each $\tau_g$ is isometric on $E_0$, and therefore extends uniquely to an isometric linear map on the completion $E$, again denoted by $\tau_g$. Since the action is strongly continuous on $E_0$ and $E_0$ is dense in $E$, it follows by approximation that the extended action is strongly continuous on $E$.
    
    Moreover, by continuity and density, conditions (i) and (ii) continue to hold for all $x,y\in E$ and $g\in G$. Thus the completed bimodule $E$ together with the extended action $\{\tau_g\}_{g\in G}$ satisfies the assumptions of the previous theorem. Therefore $\mathcal{A}\rtimes_{\af} G$ and $\mathcal{B}\rtimes_{\bt} G$ are Morita equivalent.
    \end{proof}

   To construct Morita equivalences for the crossed products, it is necessary to ensure that the relevant action can be transported to the dual side. Namely, we consider only those $\af$ for which the associated $p$-adic integer $x_\af$ is invertible and both $x_\af$ and $x_\af^{-1}$ have $p$-adic expansions with only even digits. Let
    \[
    \mathbb{Z}^{\mathrm{even}}_{p}
    :=\left\{
    z=\sum_{i=0}^{\infty} z_i p^i\in \mathbb{Z}_p :
    z_i \text{ is even for every } i
    \right\}.
    \]
    We then define
    \[
    \mathcal{K}_p:=\left\{
    x_\af\in \mathbb{Z}_p^\times :
    x_\af\in \mathbb{Z}^{\mathrm{even}}_{p}
    \text{ and }
    x_\af^{-1}\in \mathbb{Z}^{\mathrm{even}}_{p}
    \right\}.
    \]
    Equivalently,
    \[
    \mathcal{K}_p
    =
    \left\{
    x_\af\in \mathbb{Z}_p^\times :
    x_\af=\sum_{i=0}^{\infty}x_i p^i,\quad
    x_\af^{-1}=\sum_{i=0}^{\infty}y_i p^i,
    \text{ with } x_i,y_i \text{ even for all } i
    \right\}.
    \]
    We consider the following set:
   \[
   \E_p:=\Big\{\af=(\af_n)_{n\in\N}\in \prod_{n\in\N} [0,1): \forall~n,~\exists~x_n\in\{0,2,4,\cdots,p-1\} \text{ such that } p\af_{n+1}=\af_n+x_n\Big\}.
   \]
    Let $\af\in\E_p$ be such that $x_{\af}\in \mathcal{K}_p$. Then the sequence $\bt$ defined by
    \[
    (\bt)_n=\cfrac{1}{\af_0p^n}+\cfrac{\sum_{i=0}^{n-1}y_ip^i}{p^n}\in \Om_p^{\text{even}}.
    \]
    
    Note first that, for every odd prime $p$, the set $\mathcal K_p$ is nonempty. Indeed, $ -1=\sum_{i=0}^{\infty}(p-1)p^i$ as a $p$-adic integer, and since $p-1$ is even, this element belongs to $\mathcal K_p$. In the appendix, we show
    that $\mathcal K_p$ is uncountable for $p\geq 7$.
    
    Fix $\alpha_0\in (0,1)$ and define $h_{\alpha_0}:\mathcal K_p\longrightarrow \mathcal E_p$
    as follows. If
    $
    x=\sum_{i=0}^{\infty}x_i p^i\in \mathcal K_p,
    $ then $h_{\alpha_0}(x)=\alpha=(\alpha_n)_{n\geq 0},$
    where $\alpha_n=\frac{\alpha_0+\sum_{i=0}^{n-1}x_i p^i}{p^n}.$
    Since $p\alpha_{n+1}=\alpha_n+x_n,$
    and each $x_n$ is even, we have $\alpha\in \mathcal E_p$. Moreover, the map
    $h_{\alpha_0}$ is injective, because the digits of $x$ can be recovered from
    $\alpha$ by the formula $x_n=p\alpha_{n+1}-\alpha_n.$
    Hence $\mathcal P_{\alpha_0}:=h_{\alpha_0}(\mathcal K_p)$ is an uncountable subset of $\mathcal E_p$. By the isomorphism classification of noncommutative solenoids, two parameters determine isomorphic solenoids only when up to the sign symmetry $\alpha\sim -\alpha \pmod{\Z}$, one defining sequence is a truncated subsequence of the other  \cite{JP11}*{Corollary~4.3}. For a fixed parameter $\alpha$, there are only countably many parameters $\beta$ satisfying this condition. Consequently, each isomorphism class meets $\mathcal P_{\alpha_0}$ in at most countably many points. Hence $\mathcal P_{\alpha_0}$ contains uncountably many parameters that define pairwise nonisomorphic noncommutative solenoids.
    Consequently, our construction applies to uncountably many pairwise
    nonisomorphic noncommutative solenoids. In particular, it provides a large class
    of parameters for which the projective bimodule construction extends to the
    corresponding crossed products, yielding Morita equivalences between the
    associated crossed product algebras.

    Let $\af\in\E_p$ be such that $x_{\af}\in \mathcal{K}_p$, and let $\af:\rm{SL}_2\left(\Z[1/p]\right)\curvearrowright \A^{\om}_{\af}$ and $\bt:\rm{SL}_2\left(\Z[1/p]\right)\curvearrowright \A^{\om}_{\bt}$ be the actions defined in subsection~\ref{action}. We recall the matrices
    \[
    J=\begin{pmatrix}
        0 & 1\\
        -1 & 0\\
    \end{pmatrix} \qquad,\qquad
    P=L_1=\begin{pmatrix}
        1 & 0\\
        1 & 1\\
    \end{pmatrix}\qquad \text{and}\qquad
    D=\begin{pmatrix}
        p & 0\\
        0 & \frac{1}{p}
    \end{pmatrix}.
    \]
    By Lemma~\ref{lem:generators}, the matrices \(J\), \(P\), and \(D\) generate
    \(\mathrm{SL}_2(\mathbb Z[1/p])\). Take $\af\in\E_p$ such that the associated invertible $p$-adic integer is $x_{\af}=\sum_{i=0}^{\infty} x_ip^i$ with inverse $y_{\af}=\sum_{i=0}^{\infty} y_ip^i$. Recall the Heisenberg bimodule construction (cf. Section~\ref{bimodule}) for the solenoid associated with parameters $\af$ and $\bt$, where
    \[
    \af_n=\left(\cfrac{\af_0+\sum_{j=0}^{n-1}x_jp^j}{p^n}\right) \quad\text{and}\quad \bt_n=\left(\cfrac{\frac{1}{\af_0}+\sum_{j=0}^{n-1}y_jp^j}{p^n}\right).
    \]
    We now apply Proposition~\ref{M.E of tori} to the case $\mathcal{A}=\A^{\om}_{\af},\,\mathcal A_0=\A_{\af}^{\I},\, \mathcal{B}=\A^{\om}_{\bt},\, \mathcal B_0 =\A_{\bt}^{\I}$, and $X_0=\mathcal{S}(\Q_p\times\R)$. Our first goal is to show that there exist unitary operators $S_J,\, S_P$ and $S_D$ on $\rm{L}^2(\Q_p\times\R)$ such that for all $f,g\in\mathcal{S}(\Q_p\times\R)$, we have
    \[
    \langle S_J(f), S_J(g) \rangle_{\A^{\om}_{\af}}=\af_J(\langle f,g\rangle_{\A^{\om}_{\af}}) \quad,\quad
    \prescript{}{\A^{\om}_{\bt}}{}\langle S_J(f), S_J(g) \rangle=\bt_{J^{-1}}(\prescript{}{\A^{\om}_{\bt}}{}\langle f,g\rangle)
    \]
    \[
    \langle S_P(f), S_P(g) \rangle_{\A^{\om}_{\af}}=\af_P(\langle f,g\rangle_{\A^{\om}_{\af}}) \quad,\quad
    \prescript{}{\A^{\om}_{\bt}}{}\langle S_P(f), S_P(g) \rangle=\bt_{P^{-1}}(\prescript{}{\A^{\om}_{\bt}}{}\langle f,g\rangle)
    \]
    \[
    \langle S_D(f), S_D(g) \rangle_{\A^{\om}_{\af}}=\af_D(\langle f,g\rangle_{\A^{\om}_{\af}}) \quad,\quad
    \prescript{}{\A^{\om}_{\bt}}{}\langle S_D(f), S_D(g) \rangle=\bt_{D}(\prescript{}{\A^{\om}_{\bt}}{}\langle f,g\rangle).
    \]
    
    \begin{dfn}
        We define the operators
        \begin{itemize}
            \item[(i)]  $S_J:\mathcal{S}(\Q_p\times\R)\to \mathcal{S}(\Q_p\times\R)$ by 
            \[
            S_J(f)(m,n):=\int_{\Q_p\times\R} e\left(-\frac{ln}{\af_0}\right) e_p(-hmy_{\af}) f(h,l) ~\,dh \, dl
            \]
            \item[(ii)]  $S_P:\mathcal{S}(\Q_p\times\R)\to \mathcal{S}(\Q_p\times\R)$ by
            \[
            S_P(f)(m,n):= e_p\left(\frac{m^2y_{\af}}{2}\right) e\left(\frac{n^2}{2\af_0}\right) f(m,n).
            \]
            \text{and }\item[(iii)] $S_D:\mathcal{S}(\Q_p\times\R)\to \mathcal{S}(\Q_p\times\R)$ by 
            \[
            S_D(f)(m,n):=f\left(m\cdot\frac{1}{p},\cfrac{n}{p}\right).
            \]
        \end{itemize}
    \end{dfn}
    By the discussion preceding Proposition~\ref{prop:fourier-covariance}, the operators \(S_J\),
    \(S_P\), and \(S_D\) extend to unitary operators on \(\rm L^2(M)\).
    % We note that
    % \[
    %     S_J f(m,n)=|\af_0|^{-\frac{1}{2}}\,
    %     \widehat{f}\left(my_{\af},\frac{n}{\af_0}\right),
    % \]
    % where $\widehat{f}$ denotes the Fourier transform of $f$ on $\Q_p\times \R$. Since the Fourier transform is a unitary operator on $\rm{L}^2(\Q_p\times \R)$, and since the scalar factor $|\af_0|^{-1/2}$ compensates for the change of variables in the second coordinate, the operator $S_J$ extends uniquely to a unitary operator on $\rm{L}^2(\Q_p\times \R)$. On the other hand, $S_P$ acts by multiplication by a function of modulus one, and therefore it also extends uniquely to a unitary operator on $\rm{L}^2(\Q_p\times \R)$. Moreover, the operator $S_D$ is unitary by the standard invariance of the Haar measure under the corresponding normalised change of variables. Thus $S_J$, $S_P$, and $S_D$ all define unitary operators on $\rm{L}^2(\Q_p\times \R)$.

    \begin{prp}\label{weyl restriction}
    Let \(p\) be an odd prime and let $M=\Q_p\times \R.$ Recall the bicharacter from \eqref{eq:bicharacter}, given by
    \[
    \chi\big((m,n),(\xi_1,\xi_2)\big)
    =
    e_p(m\xi_1y_{\af})\,
    e\left(\frac{n\xi_2}{\af_0}\right).
    \]
    Let $\mathcal W(x,\xi)$ be the Weyl operator on $\mathcal S(M)$. Let $\rho_{\af},\lambda:\Z[1/p]\longrightarrow \Q_p\times\R$ be the maps defined by
    \begin{equation}\label{rho}
    \rho_{\af}(t):=(x_{\af}t,\af_0t),\quad
    \lambda(t):=(t,-t).
    \end{equation}
    Then, for every $(a,b)=\left(\frac{j_1}{p^{k_1}},\frac{j_2}{p^{k_2}}\right)\in \Gm_p$ and every $f\in \mathcal S(\Q_p\times \R)$, the right and left Heisenberg module actions are given by the restrictions of the Weyl representation:
    \[
    f\cdot W_{a,b}
    =
    \mathcal W\bigl(-\rho_{\af}(a),-\rho_{\af}(b)\bigr)f,\quad
    M_{a,b}\cdot f
    =
    \mathcal W\bigl(\lambda(a),-\lambda(b)\bigr)f.
    \]
    \end{prp}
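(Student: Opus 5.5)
The plan is a direct computation. Expand the Weyl operator via the coordinate formula \eqref{eq:weyl-coordinate-free}, then compare with the explicit formulas for $f\cdot W_{a,b}$ and $M_{a,b}\cdot f$ from the proposition in Section~\ref{bimodule}. Both sides are products of a phase factor, a modulation in $(m,n)$, and a translation of the argument of $f$. So it suffices to match three things: the translation, the $t$-dependent modulation, and the constant phase.

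For the right action, take $x=-\rho_\alpha(a)=(-x_\alpha a,-\alpha_0 a)$ and $\xi=-\rho_\alpha(b)=(-x_\alpha b,-\alpha_0 b)$.

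\emph{Translation.} $f(t-x)=f(m+x_\alpha a,\,n+\alpha_0 a)$, which agrees with the formula.

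\emph{Modulation.} $\chi(t,\xi)=e_p(-m x_\alpha b\,y_\alpha)\,e(-nb)$. Since $x_\alpha y_\alpha=1$ in $\mathbb Z_p$, this equals $e_p(-mb)e(-nb)$, as required.

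\emph{Constant phase.} We get
\[
\chi(-x/2,\xi)=e_p\!\left(-\tfrac{x_\alpha a b}{2}\right)e\!\left(-\tfrac{\alpha_0 ab}{2}\right).
\]
With $ab=j_1j_2/p^{k_1+k_2}$, Lemma~\ref{fractional-part-congruence} turns the $p$-adic factor into $e\bigl(-\tfrac12(\sum_{i<k_1+k_2}x_ip^i)j_1j_2/p^{k_1+k_2}\bigr)$. Here I use that $p$ is odd, so $1/2\in\mathbb Z_p$ and the fractional part is computed modulo $\mathbb Z_p$. Combined with the real factor, the total phase becomes $e\bigl(-\tfrac12\alpha_{k_1+k_2}j_1j_2\bigr)$, using $\alpha_k=(\alpha_0+\sum_{i<k}x_ip^i)/p^k$. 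This is the expected normalising scalar.

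\textbf{Main obstacle.} The delicate point is the factor $1/2$ inside $\{\cdot\}_p$. The map $t\mapsto e_p(t/2)$ is not determined by $e_p(t)$ modulo $\mathbb Z$, so Lemma~\ref{fractional-part-congruence} is only valid modulo $\mathbb Z$ and can lose a sign. To control this I write
\[
x_\alpha=\sum_{i<K}x_ip^i+p^K z,\qquad z\in\mathbb Z_p,\quad K=k_1+k_2.
\]
Then $x_\alpha ab/2=\bigl(\sum_{i<K}x_ip^i\bigr)j_1j_2/(2p^K)+zj_1j_2/2$. The second term lies in $\mathbb Z_p$ because $2$ is a unit. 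The first is a rational number with denominator prime to $p$ apart from $p^K$. Its $p$-adic fractional part agrees with the real number only modulo $\mathbb Z[1/2]$-type corrections, which I will check vanish. The cleaner route is to use the evenness of the digits, so that $\tfrac12\sum_{i<K} x_ip^i$ is an integer $N$. Then $\{N j_1j_2/p^K\}_p\equiv Nj_1j_2/p^K \pmod{\mathbb Z}$ exactly. This is precisely where $\alpha\in\mathcal E_p$ is used.

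For the left action, take $x=\lambda(a)=(a,-a)$ and $\xi=-\lambda(b)=(-b,b)$.

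\emph{Translation.} $f(t-x)=f(m-a,\,n+a)$.

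\emph{Modulation.} $\chi(t,\xi)=e_p(-mby_\alpha)\,e(nb/\alpha_0)$.

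\emph{Constant phase.} We get
\[
\chi(-x/2,\xi)=e_p\!\left(\tfrac{aby_\alpha}{2}\right)e\!\left(\tfrac{ab}{2\alpha_0}\right).
\]
Since the $y_i$ are even because $x_\alpha\in\mathcal K_p$, the same digit-truncation argument gives $e\bigl(\tfrac12\beta_{k_1+k_2}j_1j_2\bigr)$, with $\beta_k=\bigl(1/\alpha_0+\sum_{i<k}y_ip^i\bigr)/p^k$. This matches the formula for $M_{a,b}\cdot f$.

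Finally, both identities hold pointwise on $\mathcal S(M)$, which completes the argument.
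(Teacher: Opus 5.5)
Your proposal is correct and follows the paper's own argument: you expand $\mathcal W$ at $\bigl(-\rho_\alpha(a),-\rho_\alpha(b)\bigr)$ and at $\bigl(\lambda(a),-\lambda(b)\bigr)$, then match the translation, the modulation (using $x_\alpha y_\alpha=1$) and the constant phase against the explicit formulas for $f\cdot W_{a,b}$ and $M_{a,b}\cdot f$. The paper asserts $e_p(-x_\alpha ab/2)=e\bigl(-\tfrac{1}{2p^K}\sum_{i<K}x_ip^i\,j_1j_2\bigr)$ (and the analogous identity for $y_\alpha$) without comment; your use of the even digits of $x_\alpha$ and $x_\alpha^{-1}$, which makes $\tfrac12\sum_{i<K}x_ip^i$ and $\tfrac12\sum_{i<K}y_ip^i$ integers, supplies exactly the justification that step needs.
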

    
    \begin{proof}
    Let $a=\frac{j_1}{p^{k_1}},
    b=\frac{j_2}{p^{k_2}},$ and $k=k_1+k_2.$ First we prove the formula for the right action. Put
    \[
    x=-\rho_{\af}(a)=(-x_{\af}a,-\af_0a),\quad 
    \xi=-\rho_{\af}(b)=(-x_{\af}b,-\af_0b).
    \]
    Then using the relation $x_{\af}y_{\af}=1$ we have
    \[
    \begin{aligned}
        (\mathcal W(x,\xi)f)(m,n)&=\chi\left((m,n)-\frac{x}{2},\xi\right)f((m,n)-x)\\
        &= e_p\left(\left(m+\frac{x_{\af}a}{2}\right)(-x_{\af}b)y_{\af}\right)
        e\left(\frac{\left(n+\frac{\af_0a}{2}\right)(-\af_0b)}{\af_0}\right) f(m+x_{\af}a,n+\af_0a)\\
        &=e_p(-mb) e_p\left(-\frac{x_{\af}ab}{2}\right) e(-nb) e\left(-\frac{\af_0ab}{2}\right) f(m+x_{\af}a,n+\af_0a).
    \end{aligned}
    \]   
    Now $ab=\frac{j_1j_2}{p^k}$. If $x_{\af}=\sum_{i=0}^{\infty}x_ip^i,$
    then $e_p\left(-\frac{x_{\af}ab}{2}\right) =e\left(-\frac{\sum_{i=0}^{k-1}x_ip^i}{2p^k}j_1j_2\right).$
    Hence
    \[
    \begin{aligned}
    e_p\left(-\frac{x_{\af}ab}{2}\right)
    e\left(-\frac{\af_0ab}{2}\right)
    &=
    e\left(
    -\frac{\af_0+\sum_{i=0}^{k-1}x_ip^i}{2p^k}j_1j_2
    \right)=
    e\left(-\frac{\af_kj_1j_2}{2}\right).
    \end{aligned}
    \]
    Thus
    \[
    \begin{aligned}
    \W\bigl(-\rho_{\af}(a),-\rho_{\af}(b)\bigr)f(m,n)=
    e\left(-\frac{\af_{k_1+k_2}j_1j_2}{2}\right)
    e_p(-mb)e(-nb) f(m+x_{\af}a,n+\af_0a).
    \end{aligned}
    \]
    This is exactly the right action formula,
    \[
    \left(f\cdot W_{a,b}\right)(m,n)=e\left(-\frac{\af_{k_1+k_2}j_1j_2}{2}\right)
    e_p(-mb)e(-nb) f(m+x_{\af}a,n+\af_0a).
    \]
    Hence
    \[
    \left(f\cdot W_{a,b}\right)(m,n)=\W\bigl(-\rho_{\af}(a),-\rho_{\af}(b)\bigr)f(m,n).
    \]
    
    Next we prove the formula for the left action. Put
    \[
    y=\lambda(a)=(a,-a),\quad
    \eta=-\lambda(b)=(-b,b).
    \]
    We have
    \[
    \begin{aligned}
    (\W(y,\eta)f)(m,n)
    &=\chi\left((m,n)-\frac{y}{2},\eta\right) f((m,n)-y)\\
    &=e_p\left(\left(m-\frac a2\right)(-b)y_{\af}\right)
    e\left(\frac{\left(n+\frac a2\right)b}{\af_0}\right)f(m-a,n+a)\\
    &=e_p(-my_{\af}b) e_p\left(\frac{y_{\af}ab}{2}\right) e\left(\frac{nb}{\af_0}\right) e\left(\frac{ab}{2\af_0}\right) f(m-a,n+a).
    \end{aligned}
    \]
    Now $y_{\af}=x_{\af}^{-1}=\sum_{i=0}^{\infty}y_ip^i,$ then
    \[
    \begin{aligned}
    e_p\left(\frac{y_{\af}ab}{2}\right)e\left(\frac{ab}{2\af_0}\right)&=
    e\left(\frac{\sum_{i=0}^{k-1}y_ip^i}{2p^k}j_1j_2\right) e\left(\frac{j_1j_2}{2\alpha_0p^k}\right)\\
    &=e\left(\frac{\frac1{\af_0}+\sum_{i=0}^{k-1}y_ip^i}{2p^k}j_1j_2
    \right)=e\left(\frac{\bt_kj_1j_2}{2}\right).
    \end{aligned}
    \]
    Hence we have
    \[
    (\W(\lambda(a),-\lambda(b))f)(m,n)=e\left(\frac{\bt_{k_1+k_2}j_1j_2}{2}\right) e\left(\frac{nb}{\af_0}\right) e_p(-my_{\af}b) f(m-a,n+a).
    \]
    This is exactly the left action formula
    \[
    (M_{a,b}\cdot f)(m,n)
    =
    e\left(\frac{\bt_{k_1+k_2}j_1j_2}{2}\right)
    e\left(\frac{nb}{\af_0}\right)
    e_p(-my_{\af}b)f(m-a,n+a).
    \]
    Hence both module actions are precisely restrictions of the Weyl representation.
    \end{proof}
    The next lemma shows that the operators $S_J,\, S_P$ and $S_D$ are compatible with the automorphisms $\af_J,\,\af_P$ and $\alpha_D$ respectively.
    \begin{lem}\label{actual com}
        For all $f\in\mathcal{S}(\Q_p\times\R)$ and $a,b\in\Z[1/p]$, we have
        \[
        S_J(f\cdot W_{a,b})=S_J(f)\cdot\af_J(W_{a,b}) \quad ,\quad S_J(M_{a,b}\cdot f)=\bt_{J^{-1}}(M_{a,b}) \cdot S_J(f)
        \]
        \[
        S_P(f\cdot W_{a,b})=S_P(f)\cdot \af_P(W_{a,b}) \quad ,\quad S_P(M_{a,b}\cdot f)=\bt_{P^{-1}}(M_{a,b}) \cdot S_P(f)
        \]
        \[
        S_D(f\cdot W_{a,b})=S_D(f)\cdot \af_D(W_{a,b}) \quad ,\quad S_D(M_{a,b}\cdot f)=\bt_{D}(M_{a,b}) \cdot S_D(f) \text{.}
        \]
    \end{lem}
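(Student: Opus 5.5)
The plan is to reduce every identity in Lemma~\ref{actual com} to the covariance relations of Proposition~\ref{prop:fourier-covariance}, using Proposition~\ref{weyl restriction} to rewrite both module actions as Weyl operators. First I will check that the operators $S_J$, $S_P$, $S_D$ of the preceding Definition are exactly the operators $S_J$, $S_{L_1}$, $S_D$ of Section~\ref{sec:weyl-metaplectic}. For $S_J$ this holds because that section uses the bicharacter $\chi$ built from $y_\af$ and $\af_0$. For $S_P$ it holds because $C_1(m,n)=\chi\bigl((m,n),\tfrac12(m,n)\bigr)=e_p\bigl(\tfrac{m^2y_\af}{2}\bigr)e\bigl(\tfrac{n^2}{2\af_0}\bigr)$. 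I will also note that all three operators preserve $\mathcal S(\Q_p\times\R)$: the Fourier transform preserves Bruhat--Schwartz functions, and multiplication by $C_1$ and dilation by $p$ preserve local constancy, compact support and Schwartz decay.

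Next I record that the maps $\rho_\af(t)=(x_\af t,\af_0 t)$ and $\lambda(t)=(t,-t)$ of \eqref{rho} are additive. They are also $\Z[1/p]$-linear: $r\rho_\af(t)=\rho_\af(rt)$ and $r\lambda(t)=\lambda(rt)$ for $r\in\Z[1/p]$, where the action of the matrix on $M\times M$ in \eqref{eq:symplectic-action} is componentwise. Hence for any $A\in\mathrm{SL}_2(\Z[1/p])$ and $(a,b)\in\Gm_p$ we get
\[
A\cdot\bigl(-\rho_\af(a),-\rho_\af(b)\bigr)=\bigl(-\rho_\af(a'),-\rho_\af(b')\bigr),\qquad (a',b')=A(a,b).
\]
The left side behaves differently because of the sign in $(\lambda(a),-\lambda(b))$.

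Then I compute case by case. For the right action,
\[
S_A(f\cdot W_{a,b})=S_A\W(-\rho_\af(a),-\rho_\af(b))S_A^{-1}S_Af=\W\bigl(A\cdot(-\rho_\af(a),-\rho_\af(b))\bigr)S_Af=S_Af\cdot W_{A(a,b)}.
\]
Since $\af_A(W_{a,b})=W_{A(a,b)}$ by Subsection~\ref{action}, this gives the three right-hand identities. This works uniformly because Proposition~\ref{weyl restriction} is an exact equality (phases included) for every $(a,b)\in\Gm_p$.

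For the left action I do the three generators explicitly.
\begin{itemize}
\item[(J)] $J\cdot(\lambda(a),-\lambda(b))=(-\lambda(b),-\lambda(a))=(\lambda(-b),-\lambda(a))$, which is the Weyl operator of $M_{-b,a}=M_{J^{-1}(a,b)}=\bt_{J^{-1}}(M_{a,b})$.
\item[(P)] $L_1\cdot(\lambda(a),-\lambda(b))=(\lambda(a),-\lambda(b-a))$, giving $M_{a,b-a}=M_{P^{-1}(a,b)}$.
\item[(D)] $D\cdot(\lambda(a),-\lambda(b))=(\lambda(pa),-\lambda(b/p))$, giving $M_{D(a,b)}$.
\end{itemize}

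The only delicate point, and the step I expect to need the most care, is this sign bookkeeping on the left side. The twist $(t,-t)$ together with the $-\lambda(b)$ convention is what turns $J$ and $P$ into their inverses while leaving $D$ unchanged, because $D$ is diagonal. It must be checked against the identity $\lambda(-t)=-\lambda(t)$ and the exact form of \eqref{eq:symplectic-action}. Once these matrix identities are verified, the lemma follows on the dense subspace, which is all that is claimed.
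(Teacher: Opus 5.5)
Your proposal is correct and follows essentially the same route as the paper: rewrite both module actions as Weyl operators via Proposition~\ref{weyl restriction}, then apply the covariance relations $S_X\W(z)S_X^{-1}=\W(Xz)$. The paper writes out only the $J$ case and calls $P$ and $D$ ``similar''; your uniform treatment of the right action via the $\Z[1/p]$-linearity of $\rho_\alpha$, together with the explicit sign checks for $L_1$ and $D$ on the left, correctly supplies those omitted cases.
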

    \begin{proof}
    In Proposition~\ref{weyl restriction}, we have seen that the right and left module actions are realised as restrictions of the Weyl representation:
    \[
    f\cdot W_{a,b} = \mathcal W\bigl(-\rho_\alpha(a),-\rho_\alpha(b)\bigr)f, \quad
    M_{a,b}\cdot f = \mathcal W\bigl(\lambda(a),-\lambda(b)\bigr)f, 
    \]
    where $\rho_{\alpha}$ and $\lambda$ are given in \eqref{rho}. Since $J(a,b)=(b,-a),$ we have $\alpha_J(W_{a,b})=W_{b,-a}.$
    Using the Weyl covariance relation, we get
    \[
    \begin{aligned}
    S_J(f\cdot W_{a,b}) 
    &= S_J\W\bigl(-\rho_\alpha(a),-\rho_\alpha(b)\bigr)f= \mathcal W\bigl(-\rho_\alpha(b),\rho_\alpha(a)\bigr)S_Jf \\                      
    &= \W\bigl(-\rho_\alpha(b),-\rho_\alpha(-a)\bigr)S_Jf= (S_Jf)\cdot W_{b,-a} \\ 
    &= S_J(f)\cdot \alpha_J(W_{a,b}).
    \end{aligned}
    \] 
    Similarly, since $ J^{-1}(a,b)=(-b,a),$ we have $\beta_{J^{-1}}(M_{a,b})=M_{-b,a}.$
    Again using Weyl covariance, 
    \[
    \begin{aligned}
    S_J(M_{a,b}\cdot f) &= S_J\mathcal W\bigl(\lambda(a),-\lambda(b)\bigr)f = \mathcal W\bigl(-\lambda(b),-\lambda(a)\bigr)S_Jf \\
    &= \mathcal W\bigl(\lambda(-b),-\lambda(a)\bigr)S_J f= M_{-b,a}\cdot S_Jf \\
    &= \beta_{J^{-1}}(M_{a,b})\cdot S_J(f).
    \end{aligned} 
    \] 
    This proves both identities. Similarly, one can prove the identity for the operators $S_P$ and $S_D$.
    \end{proof}
    Next we verify the compatibility of the operators $S_J\,,\, S_P$ and $S_D$ with the inner products. This is precisely the condition needed in order to apply Proposition~\ref{M.E of tori}.

    \begin{prp}\label{inner product solenoid}
    For all $f,g\in\mathcal{S}(\Q_p\times\R)$, we have
    \[
    \langle S_J(f), S_J(g) \rangle_{\A^{\om}_{\af}}=\af_J(\langle f,g\rangle_{\A^{\om}_{\af}}) \quad,\quad
    \prescript{}{\A^{\om}_{\bt}}{}\langle S_J(f), S_J(g) \rangle=\bt_{J^{-1}}(\prescript{}{\A^{\om}_{\bt}}{}\langle f,g\rangle),
    \]

    \[
    \langle S_P(f), S_P(g) \rangle_{\A^{\om}_{\af}}=\af_P(\langle f,g\rangle_{\A^{\om}_{\af}}) \quad,\quad
    \prescript{}{\A^{\om}_{\bt}}{}\langle S_P(f), S_P(g) \rangle=\bt_{P^{-1}}(\prescript{}{\A^{\om}_{\bt}}{}\langle f,g\rangle),
    \]
    and
    \[
    \langle S_D(f), S_D(g) \rangle_{\A^{\om}_{\af}}=\af_D(\langle f,g\rangle_{\A^{\om}_{\af}}) \quad,\quad
    \prescript{}{\A^{\om}_{\bt}}{}\langle S_D(f), S_D(g) \rangle=\bt_{D}(\prescript{}{\A^{\om}_{\bt}}{}\langle f,g\rangle).
    \]
    \end{prp}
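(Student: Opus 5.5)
The plan is to reduce the inner-product identities to Lemma~\ref{actual com} by unwinding the coefficient-function definitions. Both inner products are defined pointwise on $\Gamma_p$ through the $\rm L^2$ inner product. Each $S\in\{S_J,S_P,S_D\}$ is a unitary on $\rm L^2(\Q_p\times\R)$ that preserves $\mathcal S(\Q_p\times\R)$. It therefore suffices to compare the two sides evaluated at each $(a,b)\in\Gamma_p$, using only unitarity of $S$ and the intertwining relations of Lemma~\ref{actual com}.

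For the right inner product, fix $S=S_J$ and write $\alpha_J$ for the corresponding automorphism. By unitarity and Lemma~\ref{actual com},
\[
\langle S f,Sg\rangle_{\A_\alpha}(a,b)=\langle (Sg)\cdot W_{-a,-b},Sf\rangle
=\langle S\bigl(g\cdot \alpha_J^{-1}(W_{-a,-b})\bigr),Sf\rangle
=\langle g\cdot W_{J^{-1}(-a,-b)},f\rangle .
\]
The middle step uses the relation in the form $(Sg)\cdot\alpha_J(W_z)=S(g\cdot W_z)$ with $z=J^{-1}(-a,-b)$, together with $\alpha_J(W_z)=W_{Jz}$. The last expression is $\langle f,g\rangle_{\A_\alpha}(J^{-1}(a,b))$, using linearity of $J^{-1}$. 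On the other hand, on $\ell^1$ the automorphism is given by $\alpha_J(F)(x)=F(J^{-1}x)$, so the two sides agree pointwise. The same computation works verbatim for $S_P$ and $S_D$. The only inputs are that $\alpha_M(W_z)=W_{Mz}$ and that $\lambda^\alpha_M$ acts on functions by $F\mapsto F\circ M^{-1}$, and both were established in Subsection~\ref{action}.

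For the left inner product the argument is analogous:
\[
{}_{\A_\beta}\langle Sf,Sg\rangle(a,b)=\langle Sf,M_{a,b}\cdot Sg\rangle=\langle Sf,S(M_{c,d}\cdot g)\rangle=\langle f,M_{c,d}\cdot g\rangle .
\]
Here $(c,d)$ is chosen so that $\beta_{J^{-1}}(M_{c,d})=M_{a,b}$, namely $(c,d)=J(a,b)$; for $S_P$ take $(c,d)=P(a,b)$, and for $S_D$ take $(c,d)=D^{-1}(a,b)$. This gives ${}_{\A_\beta}\langle f,g\rangle(J(a,b))$, which is exactly $\beta_{J^{-1}}({}_{\A_\beta}\langle f,g\rangle)(a,b)$ because $\beta_{J^{-1}}(F)(x)=F(Jx)$. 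The $P$ and $D$ cases follow in the same way.

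Finally, the pointwise identities are identities in $\mathcal S(\Gamma_p,\omega)$, so they hold in the $\mathrm C^*$-algebras.

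The main obstacle is the part of Lemma~\ref{actual com} that is only asserted there for $S_P$ and $S_D$ ("similarly"). I would verify it before using it. For $S_P$, Proposition~\ref{prop:fourier-covariance}(ii) gives $S_P\W(x,\xi)S_P^{-1}=\W(x,\xi+x)$. Applied to $x=-\rho_\alpha(a)$ and $\xi=-\rho_\alpha(b)$, this yields $\W(-\rho_\alpha(a),-\rho_\alpha(a+b))$, which by Proposition~\ref{weyl restriction} is the action of $W_{a,a+b}=W_{P(a,b)}$. Applied to $\lambda(a)$ and $-\lambda(b)$, it yields $\W(\lambda(a),-\lambda(b-a))$, which is $M_{a,b-a}=M_{P^{-1}(a,b)}$. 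This matches the claimed $\beta_{P^{-1}}$, and the sign flip is exactly because $\lambda(t)=(t,-t)$ reverses the second coordinate. For $S_D$ the computation gives $\rho_\alpha(pa)$ and $\rho_\alpha(b/p)$, and similarly $\lambda(pa)$ and $-\lambda(b/p)$, so $D$ acts on both sides with no inversion. I would check these sign conventions carefully, because they determine which of $\beta_M$ or $\beta_{M^{-1}}$ appears.
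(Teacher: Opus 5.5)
Your proposal is correct and follows essentially the same route as the paper. Both arguments evaluate the coefficient functions pointwise at $(a,b)\in\Gamma_p$, move the unitary across the $\mathrm{L}^2$ inner product, and combine the intertwining relations of Lemma~\ref{actual com} with $\lambda_M(F)=F\circ M^{-1}$; the paper phrases the unitarity step by first replacing $f$ with $S_J^{-1}f$, and your explicit check of the $S_P$ and $S_D$ cases via Proposition~\ref{prop:fourier-covariance} (giving $\alpha_P,\beta_{P^{-1}}$ and $\alpha_D,\beta_D$) correctly fills in what the paper leaves as ``similarly''.
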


    \begin{proof}
    We first prove the identities corresponding to the matrix $J$. Replacing $f$ by $S_J^{-1}(f)$, it suffices to show that
    \[
        \langle f, S_J(g) \rangle_{\A^{\om}_{\af}}=\af_J(\langle S_J^{-1}(f),g\rangle_{\A^{\om}_{\af}}) \quad,\quad
        \prescript{}{\A^{\om}_{\bt}}{}\langle f, S_J(g) \rangle=\bt_{J^{-1}}(\prescript{}{\A^{\om}_{\bt}}{}\langle S_J^{-1}(f),g\rangle).
    \]

    For the right $\A^{\om}_{\af}$-valued inner product, we compute
    \begin{align*}
        \langle f, S_J(g) \rangle_{\A^{\om}_{\af}}(a,b)
        &=\langle S_J(g)\cdot W_{-(a,b)},f \rangle_{\rm{L}^2(\Q_p\times\R)}\\
        &=\langle S_J(g)\cdot \af_J\bigl(\af_{J^{-1}}(W_{-(a,b)})\bigr),f \rangle_{\rm{L}^2(\Q_p\times\R)}\\
        &=\langle S_J\bigl(g\cdot \af_{J^{-1}}(W_{-(a,b)})\bigr),f \rangle_{\rm{L}^2(\Q_p\times\R)}\\
        &=\langle S_J\bigl(g\cdot W_{-[J^{-1}(a,b)]}\bigr),f \rangle_{\rm{L}^2(\Q_p\times\R)}\\
        &=\langle g\cdot W_{-[J^{-1}(a,b)]},S_{J}^{-1}(f) \rangle_{\rm{L}^2(\Q_p\times\R)}\\
        &=\langle S_J^{-1}(f),g \rangle_{\A^{\om}_{\af}}\left([J^{-1}(a,b)]\right)\\
        &=\af_J\left(\langle S_J^{-1}(f),g \rangle_{\A^{\om}_{\af}}\right)(a,b).
    \end{align*}

    Similarly, for the left $\A^{\om}_{\bt}$-valued inner product,
    \begin{align*}
        \prescript{}{\A^{\om}_{\bt}}{}\langle f, S_J(g) \rangle(a,b)
        &=\langle f, M_{(a,b)}\cdot S_J(g) \rangle_{\rm{L}^2(\Q_p\times\R)}\\
        &=\langle f,\bt_{J^{-1}}\bigl(\bt_J(M_{(a,b)})\bigr)\cdot S_J(g) \rangle_{\rm{L}^2(\Q_p\times\R)}\\
        &=\langle f,S_J\bigl(\bt_J(M_{(a,b)})\cdot g\bigr) \rangle_{\rm{L}^2(\Q_p\times\R)}\\
        &=\langle f,S_J\bigl(M_{[J(a,b)]}\cdot g\bigr) \rangle_{\rm{L}^2(\Q_p\times\R)}\\
        &=\langle S_J^{-1}(f), M_{[J(a,b)]}\cdot g\rangle_{\rm{L}^2(\Q_p\times\R)}\\
        &=\prescript{}{\A^{\om}_{\bt}}{}\langle S_J^{-1}(f),g \rangle([J(a,b)])\\
        &=\bt_{J^{-1}}\left(\prescript{}{\A^{\om}_{\bt}}{}\langle S_J^{-1}(f),g \rangle\right)(a,b).
    \end{align*}
    This proves the identities for $J$. The proofs for $P$ and $D$ are entirely analogous, using the corresponding compatibility relations established in Proposition \ref{actual com}.
    \end{proof}
    
    The following corollary shows that the inverses $S_J^{-1}\,,\, S_P^{-1}$ and $S_D^{-1}$ of these operators also satisfy the inner product compatibility.
    \begin{cor}
        For all $f,g\in\mathcal{S}(\Q_p\times\R)$, we have
        \[
        \langle S_J^{-1}(f), S_J^{-1}(g) \rangle_{\A^{\om}_{\af}}=\af_{J^{-1}}(\langle f,g\rangle_{\A^{\om}_{\af}}) \quad,\quad
        \prescript{}{\A^{\om}_{\bt}}{}\langle S_J^{-1}(f), S_J^{-1}(g) \rangle=\bt_J(\prescript{}{\A^{\om}_{\bt}}{}\langle f,g\rangle),
        \]

        \[
        \langle S_P^{-1}(f), S_P^{-1}(g) \rangle_{\A^{\om}_{\af}}=\af_{P^{-1}}(\langle f,g\rangle_{\A^{\om}_{\af}}) \quad,\quad
        \prescript{}{\A^{\om}_{\bt}}{}\langle S_P^{-1}(f), S_P^{-1}(g) \rangle=\bt_{P}(\prescript{}{\A^{\om}_{\bt}}{}\langle f,g\rangle).
        \]
        and
        \[
        \langle S_D^{-1}(f), S_D^{-1}(g) \rangle_{\A^{\om}_{\af}}=\af_{D^{-1}}(\langle f,g\rangle_{\A^{\om}_{\af}}) \quad,\quad
        \prescript{}{\A^{\om}_{\bt}}{}\langle S_D^{-1}(f), S_D^{-1}(g) \rangle=\bt_{D^{-1}}(\prescript{}{\A^{\om}_{\bt}}{}\langle f,g\rangle).
        \]        
    \end{cor}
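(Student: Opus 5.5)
The plan is to deduce the corollary directly from Proposition~\ref{inner product solenoid} by substitution, with no new analytic input. Take the identity for $J$ in the right inner product, $\langle S_J(u),S_J(v)\rangle_{\A^{\om}_{\af}}=\af_J(\langle u,v\rangle_{\A^{\om}_{\af}})$. It holds for all $u,v\in\mathcal S(\Q_p\times\R)$. I will substitute $u=S_J^{-1}(f)$ and $v=S_J^{-1}(g)$.

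This substitution is legitimate only if each of $S_J$, $S_P$, $S_D$ maps $\mathcal S(\Q_p\times\R)$ bijectively onto itself, so that the inverses preserve the Schwartz space. I expect this to be the main point requiring care, though it is routine.
\begin{itemize}
\item $S_P^{-1}$ is multiplication by the conjugate unimodular quadratic character. It preserves locally constant compactly supported functions in the $p$-adic variable, since $e_p(-m^2y_\af/2)$ is locally constant. It preserves Schwartz functions in the real variable, since all derivatives of the chirp have polynomial growth.
\item $S_D^{-1}f(m,n)=f(pm,pn)$ clearly preserves $\mathcal S$.
\item $S_J$ is, up to the invertible rescalings $m\mapsto my_\af$ and $n\mapsto n/\af_0$, the Fourier transform on $\Q_p\times\R$. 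That transform is a bijection of the Schwartz--Bruhat space. Its inverse is the conjugate-kernel transform, which by Plancherel for the self-dual measure is again of the same form and preserves $\mathcal S$.
\end{itemize}

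With this in hand, substitution gives
\[
\langle f,g\rangle_{\A^{\om}_{\af}}=\af_J\bigl(\langle S_J^{-1}f,S_J^{-1}g\rangle_{\A^{\om}_{\af}}\bigr).
\]
Applying the automorphism $\af_{J^{-1}}=\af_J^{-1}$ to both sides yields the first claimed identity. This uses that $M\mapsto\af_M$ is a group action on $\A^{\om}_{\af}$, which is immediate from $\ld^\af_M(W_x)=W_{Mx}$.

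The left identity follows in the same way. Substituting into $\prescript{}{\A^{\om}_{\bt}}{}\langle S_J u,S_J v\rangle=\bt_{J^{-1}}(\prescript{}{\A^{\om}_{\bt}}{}\langle u,v\rangle)$ and then applying $\bt_J=\bt_{J^{-1}}^{-1}$ gives $\bt_J$ on the right-hand side. The identical argument for $P$ gives $\af_{P^{-1}}$ and $\bt_P$. For $D$ it gives $\af_{D^{-1}}$ and $\bt_{D^{-1}}$, consistent with the unflipped $\bt_D$ in the proposition.

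I will conclude by noting that these inverse relations, together with Proposition~\ref{inner product solenoid}, are what allow the action of an arbitrary word in $J,P,D$ and their inverses to satisfy the hypotheses of Proposition~\ref{M.E of tori}.
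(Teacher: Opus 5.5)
Your proposal is correct and is exactly the paper's argument: substitute $S_J^{-1}(f)$ and $S_J^{-1}(g)$ into Proposition~\ref{inner product solenoid}, then apply $\af_{J^{-1}}$ (respectively $\bt_J$) to both sides, and do the same for $P$ and $D$. Your explicit check that $S_J^{-1}$, $S_P^{-1}$ and $S_D^{-1}$ preserve $\mathcal S(\Q_p\times\R)$ is needed for the substitution; the paper uses this only implicitly, so including it is a genuine improvement.
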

    
    \begin{proof}
    We only prove this for $S_J^{-1}$, as the others follow in the same way. In Proposition~\ref{inner product solenoid}, we show that for all $f,g\in\mathcal{S}(\Q_p\times\R)$, we have
    \[
    \langle S_J(f), S_J(g) \rangle_{\A^{\om}_{\af}}=\af_J(\langle f,g\rangle_{\A^{\om}_{\af}}) \quad,\quad
    \prescript{}{\A^{\om}_{\bt}}{}\langle S_J(f), S_J(g) \rangle=\bt_{J^{-1}}(\prescript{}{\A^{\om}_{\bt}}{}\langle f,g\rangle),
    \]
    Replace $f$ and $g$ by $S_J^{-1}(f)$ and $S_J^{-1}(g)$ in the above relation, we obtain
    \begin{equation}\label{LHS}
         \langle f, g \rangle_{\A^{\om}_{\af}}=\af_J(\langle S_J^{-1}(f),S_J^{-1}(g) \rangle_{\A^{\om}_{\af}})
    \end{equation}
   
    \begin{equation}\label{RHS}
       \prescript{}{\A^{\om}_{\bt}}{}\langle f,g \rangle=\bt_{J^{-1}}(\prescript{}{\A^{\om}_{\bt}}{}\langle S_J^{-1}(f), S_J^{-1}(g) \rangle).
    \end{equation}
    Applying $\af_{J^{-1}}$ and $\bt_J$ to both sides of equations~\eqref{LHS} and \eqref{RHS} respectively, we obtain the desired result.
    \end{proof}
        We now use the fact that $J\,,\,P$ and $D$ generate $\rm{SL}_2\left(\Z[1/p]\right)$ in order to construct $\Z$-actions on $\mathcal{S}(\Q_p\times\R)$. We denote by $S_{J^{-1}}\,,\,S_{P^{-1}}$ and $S_{D^{-1}}$ to be the inverses of $S_J\,,\,S_P$ and $S_D$ in $\mathcal{U}(\rm{L}^2(\Q_p\times\R))$, respectively.

        \begin{dfn}
            Let $A\in\rm{SL}_2(\Z[1/p])$. Choose a decomposition
            \[
            A=X_1X_2\cdots X_n,
            \qquad X_i\in\left\{J,P,D,J^{-1},P^{-1},D^{-1}\right\}.
            \]
            We define an associated operator $S_A:\mathcal{S}(\Q_p\times\R)\to\mathcal{S}(\Q_p\times\R)$
            by
            \[
            S_A:=S_{X_1}\circ S_{X_2}\circ\cdots\circ S_{X_n}.
            \]
        \end{dfn}
        
        \begin{rmk}
            Although the operator \(S_A\) may depend on the chosen word representative up to a scalar of modulus one, this ambiguity does not affect the inner-product identities below, since scalar factors cancel in both inner products.
        \end{rmk}
        
        We are now in a position to prove the main theorem of this section. Let $T=\begin{pmatrix}
            -1 & 0\\
            0 & 1
        \end{pmatrix}.$ Note that $T=T^{-1}$, and hence $T^2=I_2$. One can see that for a matrix of the form
        $X=\begin{pmatrix}
            a & b\\
            c & a
        \end{pmatrix},$
        we have
        \[
        X^{-1}=\begin{pmatrix}
            a & -b\\
            -c & a
        \end{pmatrix}=TXT,
        \qquad
        T=\begin{pmatrix}
            -1 & 0\\
            0 & 1
        \end{pmatrix}.
        \]
        The generators $J$ and $P$, as well as their inverses, are all of this form; it follows that
        \[
        J^{-1}=TJT\,,\, P^{-1}=TPT. 
        \] Also, we note that $D=TDT.$ Hence we rewrite the relations in the Proposition~\ref{inner product solenoid} in the following way:
        
       \noindent For all $f,g\in\mathcal{S}(\Q_p\times\R)$, we have
        \[
        \langle S_J(f), S_J(g) \rangle_{\A^{\om}_{\af}}=\af_J(\langle f,g\rangle_{\A^{\om}_{\af}}) \quad,\quad
        \prescript{}{\A^{\om}_{\bt}}{}\langle S_J(f), S_J(g) \rangle=\bt_{TJT}(\prescript{}{\A^{\om}_{\bt}}{}\langle f,g\rangle),
        \]
    
        \[
        \langle S_P(f), S_P(g) \rangle_{\A^{\om}_{\af}}=\af_P(\langle f,g\rangle_{\A^{\om}_{\af}}) \quad,\quad
        \prescript{}{\A^{\om}_{\bt}}{}\langle S_P(f), S_P(g) \rangle=\bt_{TPT}(\prescript{}{\A^{\om}_{\bt}}{}\langle f,g\rangle).
        \]
        and
        \[
        \langle S_D(f), S_D(g) \rangle_{\A^{\om}_{\af}}=\af_D(\langle f,g\rangle_{\A^{\om}_{\af}}) \quad,\quad
        \prescript{}{\A^{\om}_{\bt}}{}\langle S_D(f), S_D(g) \rangle=\bt_{TDT}(\prescript{}{\A^{\om}_{\bt}}{}\langle f,g\rangle).
        \] This will be helpful in the proof of Theorem~\ref{Z action Morita}.
        \begin{thm}\label{Z action Morita}
            Let $A\in\rm{SL}_2\left(\Z[1/p]\right)$ and let $\af\in \E_p$ with $\af_0\neq 0,x_0\neq 0$, such that the associated $p$-adic integer $ x_{\af}=\sum_{i=0}^{\infty}x_ip^i$
            belongs to $\mathcal{K}_p$. Write 
            \(
            x_{\af}^{-1}=\sum_{i=0}^{\infty}y_ip^i.
            \)
            Then
            \[
            \A^{\om}_{\bt}\rtimes_B \Z\sim_{\rm{M.E}} \A^{\om}_{\af}\rtimes_A\Z,
            \]
            where $B=TAT$ and $\bt=(\bt_n)$ is given by
            \[
            \bt_n=\cfrac{1}{\af_0p^n}+\cfrac{\sum_{i=0}^{n-1}y_ip^i}{p^n}.
            \]
        \end{thm}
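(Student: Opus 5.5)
We will apply Proposition~\ref{M.E of tori} with $G=\Z$, $\mathcal A=\A^{\om}_{\af}$, $\mathcal A_0=\A^{\I}_{\af}$, $\mathcal B=\A^{\om}_{\bt}$, $\mathcal B_0=\A^{\I}_{\bt}$ and $E_0=\mathcal S(\Q_p\times\R)$ with the Heisenberg bimodule structure of Section~\ref{bimodule}. Here $\Z$ acts on $\A^{\om}_{\af}$ by $n\mapsto \af_{A^n}=\af_A^n$ and on $\A^{\om}_{\bt}$ by $n\mapsto\bt_{B^n}$. Since $G=\Z$ is discrete, strong continuity is automatic. The task is therefore to produce a single unitary $S$ on $\rm L^2(\Q_p\times\R)$, preserving $\mathcal S(\Q_p\times\R)$, such that
\[
\langle S f,S g\rangle_{\A^{\om}_{\af}}=\af_A(\langle f,g\rangle_{\A^{\om}_{\af}}),\qquad
{}_{\A^{\om}_{\bt}}\langle Sf,Sg\rangle=\bt_{B}({}_{\A^{\om}_{\bt}}\langle f,g\rangle).
\]
We then set $\tau_n:=S^n$. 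The identities for $S^n$, $n\in\Z$, follow by iteration, using the $S^{-1}$ identities from the Corollary for negative $n$.

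First, we write $A=X_1X_2\cdots X_k$ with $X_i\in\{J,P,D,J^{-1},P^{-1},D^{-1}\}$ using Lemma~\ref{lem:generators}, and put $S:=S_A=S_{X_1}\cdots S_{X_k}$. Each $S_{X_i}$ preserves $\mathcal S(M)$. For $S_P$ and $S_D$ this is clear. For $S_J$ it holds because the Fourier transform on $\Q_p\times\R$, with the dilations by $y_\af$ and $1/\af_0$, preserves the Bruhat--Schwartz space.

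Second, we combine Proposition~\ref{inner product solenoid} and the Corollary in their rewritten form. For each letter $X\in\{J,P,D\}^{\pm1}$ we have
\[
\langle S_X f,S_X g\rangle_{\A^{\om}_{\af}}=\af_X(\langle f,g\rangle_{\A^{\om}_{\af}})\quad\text{and}\quad {}_{\A^{\om}_{\bt}}\langle S_X f,S_X g\rangle=\bt_{TXT}({}_{\A^{\om}_{\bt}}\langle f,g\rangle).
\]
For the inverse letters this holds because $T X^{-1}T=(TXT)^{-1}$. Composing along the word and using that $\af$ and $\bt$ are group actions, we get
\[
\langle S_Af,S_Ag\rangle_{\A^{\om}_{\af}}=\af_{X_1}\cdots\af_{X_k}(\langle f,g\rangle)=\af_A(\langle f,g\rangle).
\]
On the left side we get $\bt_{TX_1T}\cdots\bt_{TX_kT}=\bt_{TX_1\cdots X_kT}=\bt_{TAT}=\bt_B$, using $T^2=I$. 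The one point that needs care is the order of composition. Applying $S_{X_k}$ first and $S_{X_1}$ last gives $\af_{X_1}\circ\cdots\circ\af_{X_k}$, which is consistent with $\af$ being a homomorphism. The word-dependence of $S_A$ is only up to a scalar of modulus one, which cancels in both inner products.

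Third, we check the remaining hypotheses. We must confirm that the parameters lie where the construction needs them: $\af\in\E_p\subseteq\Om_p^{\rm even}$, and $\bt\in\Om_p^{\rm even}$ because $x_\af\in\mathcal K_p$, so the $y_i$ are even. Then $\om_\af$ and $\om_\bt$ are $\mathrm{SL}_2(\Z[1/p])$-invariant, and the actions $\af,\bt$ of Subsection~\ref{action} exist. We also need $\af_A$ and $\bt_B$ to preserve the dense subalgebras $\A^{\I}_{\af}$ and $\A^{\I}_{\bt}$, so that the inner-product identities make sense at the pre-bimodule level. This holds because $\af_A$ acts by $f\mapsto f\circ A^{-1}$ on coefficient functions, and the inner products computed from $S_Af$ are again Schwartz by \cite{Rie88}*{Lemma~2.3}. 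Proposition~\ref{M.E of tori} then yields $\A^{\om}_{\bt}\rtimes_B\Z\sim_{\rm M.E}\A^{\om}_{\af}\rtimes_A\Z$.

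The step I expect to be the main obstacle is the second one. The covariance relations for $P$ and $D$ are only asserted by analogy in Lemma~\ref{actual com}, so they must genuinely hold with the twisted identification $X\mapsto TXT$ on the $\bt$-side. If the $D$-case is off, the cure would be to insert a suitable $S_T$-type correction.
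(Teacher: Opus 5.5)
Your proposal is correct and is essentially the paper's own proof. You decompose $A$ into the letters $J^{\pm1},P^{\pm1},D^{\pm1}$ and let $S_A$ be the composite; iterating the generator identities gives $\alpha_A$ on the right and $\beta_{TX_1T}\cdots\beta_{TX_kT}=\beta_{TAT}$ on the left, and you then take $\tau_n=S_A^n$ and invoke Proposition~\ref{M.E of tori}. The $D$-case you were worried about needs no $S_T$-type correction: $TDT=D$, and the covariance $S_D\mathcal{W}(x,\xi)S_D^{-1}=\mathcal{W}(px,p^{-1}\xi)$ together with $p\lambda(a)=\lambda(pa)$ and $p^{-1}\lambda(b)=\lambda(b/p)$ gives $S_D(M_{a,b}\cdot f)=M_{pa,b/p}\cdot S_D f=\beta_D(M_{a,b})\cdot S_D f$ directly.
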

     \begin{proof}
        Fix a decomposition
        \[
        A=X_1X_2\cdots X_n,
        \qquad X_i\in\{J,P,J^{-1},P^{-1},\,D,\,D^{-1}\},
        \]
        and let $S_A$ be the associated operator. Since each of the operators \(S_J,S_P,S_D,S_{J^{-1}},S_{P^{-1}}\), and \(S_{D^{-1}}\) preserves \(\mathcal S(\mathbb Q_p\times\mathbb R)\), the same is true for \(S_A\) and \(S_A^{-1}\). Thus
        \[
        \tau:\Z\to \Aut\bigl(\mathcal{S}(\Q_p\times\R)\bigr),\qquad \tau_n=(S_A)^n,
        \]
        defines a $\Z$-action on $\mathcal{S}(\Q_p\times\R)$.
    
        By definition of $S_A$ and Proposition~\ref{inner product solenoid}, for all $f,g\in\mathcal{S}(\Q_p\times\R)$ we have
        \begin{align*}
            \langle S_A(f),S_A(g)\rangle_{\A^{\om}_{\af}}
            &=\left\langle S_{X_1}\circ S_{X_2}\circ\cdots\circ S_{X_n}(f),\, S_{X_1}\circ S_{X_2}\circ\cdots\circ S_{X_n}(g) \right\rangle_{\A^{\om}_{\af}}\\
            &=\af_{X_1}\!\left(\left\langle S_{X_2}\circ\cdots\circ S_{X_n}(f),\, S_{X_2}\circ\cdots\circ S_{X_n}(g) \right\rangle_{\A^{\om}_{\af}}\right)\\
            &\ \ \vdots\\
            &=\af_{X_1}\af_{X_2}\cdots\af_{X_n}(\langle f,g \rangle_{\A^{\om}_{\af}})\\
            &=\af_{X_1X_2\cdots X_n}(\langle f,g \rangle_{\A^{\om}_{\af}})\\
            &=\af_A(\langle f,g \rangle_{\A^{\om}_{\af}}).
        \end{align*}
        Replacing $f$ and $g$ by $S_A^{-1}(f)$ and $S_A^{-1}(g)$, respectively, we obtain
        \[
        \langle f,g \rangle_{\A^{\om}_{\af}}
        =
        \af_A(\langle S_A^{-1}(f),S_A^{-1}(g) \rangle_{\A^{\om}_{\af}}).
        \]
        Applying $\af_{A^{-1}}$ to both sides yields
        \[
           \af_{A^{-1}}(\langle f,g \rangle_{\A^{\om}_{\af}})
           =
           \langle S_A^{-1}(f),S_A^{-1}(g) \rangle_{\A^{\om}_{\af}}.
        \]
        Therefore, for every $m\in\Z$, we have
        \[
            \langle \tau_m(f),\tau_m(g) \rangle_{\A^{\om}_{\af}}
            =
            \langle (S_A)^m(f),(S_A)^m(g) \rangle_{\A^{\om}_{\af}}
            =
            (\af_A)^m(\langle f,g \rangle_{\A^{\om}_{\af}})
            =
            \af_{A^m}(\langle f,g \rangle_{\A^{\om}_{\af}}).
        \]
    
        Similarly, we have
        \begin{align*}
            \prescript{}{\A^{\om}_{\bt}}{}\langle S_A(f),S_A(g) \rangle
            &=\prescript{}{\A^{\om}_{\bt}}{}\left\langle S_{X_1}\circ S_{X_2}\circ\cdots\circ S_{X_n}(f),  S_{X_1}\circ S_{X_2}\circ\cdots\circ S_{X_n}(g) \right\rangle\\
            &= \bt_{TX_1T} \left(\prescript{}{\A^{\om}_{\bt}}{}\langle S_{X_2}\circ S_{X_3}\circ\cdots\circ S_{X_n}(f),  S_{X_2}\circ S_{X_3}\circ\cdots\circ S_{X_n}(g) \rangle\right)\\
            &\ \ \vdots\\
            &=\bt_{(TX_1T)(TX_2T)\cdots (TX_nT)}(\prescript{}{\A^{\om}_{\bt}}{}\langle f, g \rangle)\\
            &=\bt_{(TX_1X_2\cdots X_nT)}(\prescript{}{\A^{\om}_{\bt}}{}\langle f, g \rangle)\\
            &=\bt_{TAT}(\prescript{}{\A^{\om}_{\bt}}{}\langle f, g \rangle).\\
        \end{align*}
        Writing $B=TAT$, we therefore obtain
        \[
        \prescript{}{\A^{\om}_{\bt}}{}\langle S_A(f),S_A(g) \rangle
        =
        \bt_{B}(\prescript{}{\A^{\om}_{\bt}}{}\langle f, g \rangle).
        \]
        Replacing $f$ and $g$ by $S_A^{-1}(f)$ and $S_A^{-1}(g)$, respectively, we get
        \[
        \bt_{B^{-1}}(\prescript{}{\A^{\om}_{\bt}}{}\langle f,g \rangle)
        =
        \prescript{}{\A^{\om}_{\bt}}{}\langle S_A^{-1}(f),S_A^{-1}(g) \rangle.
        \]
        Hence, for every $m\in\Z$,
        \[
           \prescript{}{\A^{\om}_{\bt}}{}\langle \tau_m(f),\tau_m(g) \rangle
           =
           \prescript{}{\A^{\om}_{\bt}}{}\langle (S_A)^m(f),(S_A)^m(g) \rangle
           =
           (\bt_B)^m(\prescript{}{\A^{\om}_{\bt}}{}\langle f,g\rangle)
           =
           \bt_{B^m}(\prescript{}{\A^{\om}_{\bt}}{}\langle f,g \rangle).
        \]
    
        Thus the action $\tau:\Z\curvearrowright \mathcal{S}(\Q_p\times\R)$ satisfies the hypotheses of Proposition~\ref{M.E of tori}. Therefore,
        \[
        \A^{\om}_{\bt}\rtimes_B \Z\sim_{\rm{M.E}} \A^{\om}_{\af}\rtimes_A\Z.
        \]
    \end{proof}
%%%%%%%%%%%%%%%%%%%%%%%%%%%%%%%%%%%%%%%%%%%%%%%%%%%%%%%%%%%%%%%%%%%%%%%%%%%%%%%%%%%%%%%%%%%%%%%%%%%%%%%%%%%%%%%%%%%%%%%%%%%%%%%%%%%%%%%%%%%%%%%%%%%%%%%%%%%%%%%%%%%%%%%%%%%%%%%%%%%%%%%%%%%%%%%%%%%%%%%%%%%%%%%%%%%%%%%%%%%%%%%%%%%%%%%%%%%%%%%%%%%%%%%%%%%%%%%%%%%%%%%%%%%%%%%%%%%%%%%%%%

\section{Finite cyclic groups and crossed crpoducts of noncommutative Solenoids} \label{Morita eq 2}
    \begin{prp}
        Elements of finite orders in $\rm{SL}_2(\Z[1/p])$ have orders $1,\,2,\,3,\,4$, and $6.$
    \end{prp}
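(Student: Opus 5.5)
The plan is the classical characteristic-polynomial argument, moved from $\mathbb Z$ to the ring $R=\mathbb Z[1/p]$. Let $A\in\mathrm{SL}_2(R)$ with $A^n=I$ for some $n\geq 1$, and let $\chi_A(t)=t^2-\operatorname{tr}(A)\,t+1$ be its characteristic polynomial; it has coefficients in $R$ because $\det A=1$. Let $\lambda_1,\lambda_2\in\mathbb C$ be the eigenvalues of $A$. Since $A^n=I$, both are $n$-th roots of unity, and $\lambda_1\lambda_2=1$.

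The key step is to show that $\operatorname{tr}(A)=\lambda_1+\lambda_2$ lies in $\mathbb Z$. It is a sum of roots of unity, hence an algebraic integer. It is also rational, since it belongs to $R\subseteq\mathbb Q$. A rational algebraic integer is an ordinary integer, so $\operatorname{tr}(A)\in\mathbb Z$. Moreover $|\operatorname{tr}(A)|\leq|\lambda_1|+|\lambda_2|=2$, so $\operatorname{tr}(A)\in\{-2,-1,0,1,2\}$.

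Next I go through the five cases.
\begin{itemize}
\item If $\operatorname{tr}(A)=0,\pm1$, then $\chi_A$ is $t^2+1$, $t^2-t+1$ or $t^2+t+1$. Each has distinct roots, which are primitive roots of unity of order $4$, $6$ and $3$ respectively. So $A$ is diagonalizable over $\mathbb C$ and has that order.
\item If $\operatorname{tr}(A)=\pm2$, then $\chi_A=(t\mp1)^2$. A finite-order matrix is diagonalizable over $\mathbb C$, because its minimal polynomial divides $t^n-1$, which has distinct roots. Hence $A=\pm I$, of order $1$ or $2$.
\end{itemize}

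To show that every listed order actually occurs, I will give explicit witnesses, which already lie in $\mathrm{SL}_2(\mathbb Z)\subseteq\mathrm{SL}_2(R)$:
\[
I,\quad -I,\quad \begin{pmatrix}0&-1\\1&-1\end{pmatrix},\quad J,\quad \begin{pmatrix}0&-1\\1&1\end{pmatrix},
\]
of orders $1,2,3,4,6$ respectively.

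The only real obstacle is the integrality of the trace, since entries of $A$ may have $p$-power denominators. I resolve it with the algebraic-integer argument above. An alternative would be to bound the $p$-adic valuation of the eigenvalues, but that is not needed. No earlier results from the paper are required beyond the elementary definitions.
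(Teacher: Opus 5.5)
Your proposal is correct and follows essentially the same route as the paper: you show the trace is a rational algebraic integer, hence in $\mathbb Z$, bound it by $|\operatorname{tr}(A)|\le 2$, analyse the five possible characteristic polynomials, and exhibit witnesses in $\mathrm{SL}_2(\mathbb Z)$. The only cosmetic difference is that you settle the cases via diagonalizability over $\mathbb C$, whereas the paper uses Cayley--Hamilton together with $\gcd(x^n-1,\,x^2-tx+1)$; the two arguments amount to the same thing.
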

    \begin{proof}
      The proof below follows the same line of argument as \cite{Con}*{Theorem~2.7}. We adapt the argument to the setting of $\Z[1/p]$ and include the details for completeness. The following examples show that each of the indicated orders occurs. The matrix $I_2$ has order $1$, and
        \[
        W_2=\begin{pmatrix}
            -1 & 0\\
            0 & -1\\
        \end{pmatrix},\qquad W_3=\begin{pmatrix}
             -1 & 1\\
             -1 & 0\\
         \end{pmatrix},\qquad
         W_4=\begin{pmatrix}
             0 & 1\\
             -1 & 0\\
         \end{pmatrix},\qquad W_6=\begin{pmatrix}
             0 & 1\\
             -1 & 1\\
         \end{pmatrix}
        \]
        have orders $2,\,3,\,4$, and $6$, respectively. Another set of matrices having orders $3,\,4,\,6$, respectively, is
        \[
        W_{3,p}=\begin{pmatrix}
            -1 & p\\
            -\frac{1}{p} & 0\\
        \end{pmatrix},\qquad
        W_{4,p}=\begin{pmatrix}
            0 & p\\
            -\frac{1}{p} & 0\\
        \end{pmatrix},\qquad
        W_{6,p}=\begin{pmatrix}
            0 & p\\
            -\frac{1}{p} & 1\\
        \end{pmatrix}.
        \]
        
        Suppose $A\in\rm{SL}_2(\Z[1/p])$ has finite order $n$, so that $A^n=I_2$. We want to show that $n$ is one of $1,\,2,\,3,\,4$ or $6$. Since $A$ is a $2\times 2$ matrix with determinant $1$, the Cayley--Hamilton theorem gives
        \[
        A^2-tA+I_2=0,
        \]
        where $t$ is the trace of $A$. Moreover, $A$ also satisfies the equation $x^n-1=0$. Hence the eigenvalues of $A$ are of the form $\ld,\ld^{-1}$, where $\ld$ is a root of unity. Therefore
        \[
        \Tr(A)=\ld+\ld^{-1}\in\Z[1/p]\subset \Q.
        \]
        Also, $\ld+\ld^{-1}$ is an algebraic integer, since $\ld$ is a root of the monic polynomial $x^n-1=0$. Let the set of algebraic integers be denoted by  $\overline{\Z}$. Hence
        \[
        \Tr(A)\in\Q\cap\overline{\Z}=\Z.
        \]
        
        Since $A$ is annihilated by both $x^n-1$ and $x^2-tx+1$, it is annihilated by
        $\gcd(x^n-1,\, x^2-tx+1)$. This gcd has only a limited number of possibilities. Indeed, the integer $t$ is the sum of the eigenvalues of $A$, and these eigenvalues are roots of unity because $A$ has finite order. Hence $|t|\leq 2$.
        
        \underline{Case 1}: \(t=2\). Since \(X^n-1\) has distinct roots and
        \(X^2-2X+1=(X-1)^2\), we have
        \(
        \gcd(X^n-1,X^2-2X+1)=X-1.
        \)
        Thus \(A-I_2=O\), so \(A=I_2\), which has order \(1\).
        
        \underline{Case 2}: \(t=-2\). Since \(X^n-1\) has distinct roots and
        \(X^2+2X+1=(X+1)^2\), we have
        \(
        \gcd(X^n-1,X^2+2X+1)=X+1
        \)
        if \(n\) is even, and the gcd is \(1\) if \(n\) is odd. Since \(A\) is
        annihilated by the gcd, the gcd cannot be \(1\); otherwise \(I_2=O\). Hence it must be
        \(X+1\), so \(A+I_2=O\). Thus \(A=-I_2\), and hence \(A\) has order \(2\).
        
        \underline{Case 3}: \(t=1\). Since \(X^2-X+1\) is a factor of
        \(
        X^3+1=(X+1)(X^2-X+1),
        \)
        we have
        \(
        A^3=-I_2,
        \)
        so \(A^6=I_2\). Since \(A^2-A+I_2=O\), we cannot have \(A^2=I_2\).
        Therefore \(A\) has order \(6\).
        
        \underline{Case 4}: \(t=-1\). Since \(X^2+X+1\) is a factor of
        \(
        X^3-1=(X-1)(X^2+X+1),
        \)
        we have
        \(
        A^3=I_2.
        \)
        Since \(A^2+A+I_2=O\), we cannot have \(A=I_2\).
        Therefore \(A\) has order \(3\).
        
        \underline{Case 5}: \(t=0\). In this case,
        \(
        A^2=-I_2,
        \)
        so \(A^4=I_2\). Hence \(A\) has order \(4\).
    \end{proof}

    \begin{dfn}
    Let $G_1$ and $G_2$ be groups, and let $A$ be a group together with homomorphisms
    \[
    \iota_1:A\longrightarrow G_1,
    \qquad
    \iota_2:A\longrightarrow G_2.
    \]
    The \emph{amalgamated free product} of \(G_1\) and \(G_2\) over \(A\), denoted
    by $G_1 *_A G_2,$
    is defined as the quotient
    \[
    G_1 *_A G_2
    :=
    (G_1*G_2)\Big/
    \left\langle\!\left\langle
    \iota_1(a)\iota_2(a)^{-1}:a\in A
    \right\rangle\!\right\rangle,
    \]
    where \(G_1*G_2\) denotes the free product of \(G_1\) and \(G_2\), and
    \(\left\langle\!\left\langle \cdot \right\rangle\!\right\rangle\) denotes the
    normal subgroup generated by the indicated elements.
    \end{dfn}
    
    \begin{prp}[Universal property of the amalgamated free product]
    Let \(G_1,G_2\) and \(A\) be as above. Then the amalgamated free product
    \(G_1 *_A G_2\) comes equipped with homomorphisms
    \[
    j_1:G_1\longrightarrow G_1 *_A G_2,
    \qquad
    j_2:G_2\longrightarrow G_1 *_A G_2
    \]
    such that $j_1(\iota_1(a))=j_2(\iota_2(a))$
    for every $a\in A.$ Moreover, \(G_1 *_A G_2\) is universal with respect to this property.
    More precisely, if \(H\) is any group and if
    \[
    \phi_1:G_1\longrightarrow H,
    \qquad
    \phi_2:G_2\longrightarrow H
    \]
    are homomorphisms satisfying $\phi_1(\iota_1(a))=\phi_2(\iota_2(a))$ for every $a\in A,$ then there exists a unique homomorphism $\Phi:G_1 *_A G_2\longrightarrow H$
    such that
    \[
    \Phi\circ j_1=\phi_1
    \qquad\text{and}\qquad
    \Phi\circ j_2=\phi_2.
    \]
    \end{prp}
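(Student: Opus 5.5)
The plan is the standard construction of $G_1 *_A G_2$ as a quotient of the free product, followed by a check of its universal property. Let $N:=\langle\!\langle \iota_1(a)\iota_2(a)^{-1}: a\in A\rangle\!\rangle\trianglelefteq G_1*G_2$, and let $q:G_1*G_2\to G_1*_AG_2$ be the quotient map. Write $k_i:G_i\to G_1*G_2$ for the canonical inclusions of the free product. Set $j_i:=q\circ k_i$; these are homomorphisms, being composites of homomorphisms. For $a\in A$, the element $k_1(\iota_1(a))\,k_2(\iota_2(a))^{-1}$ lies in $N$ by construction. Hence its image under $q$ is trivial, which is exactly $j_1(\iota_1(a))=j_2(\iota_2(a))$.

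For the universal property, take $H$ with homomorphisms $\phi_1,\phi_2$ satisfying $\phi_1\iota_1=\phi_2\iota_2$.
\begin{itemize}
\item First apply the universal property of the ordinary free product: there is a unique homomorphism $\widetilde\Phi:G_1*G_2\to H$ with $\widetilde\Phi\circ k_i=\phi_i$. Concretely, a reduced word $g_1g_2\cdots g_r$ is sent to $\phi_{i_1}(g_1)\cdots\phi_{i_r}(g_r)$.
\item Next, $\widetilde\Phi$ kills each generator of $N$, since
\[
\widetilde\Phi\bigl(k_1\iota_1(a)\,k_2\iota_2(a)^{-1}\bigr)=\phi_1(\iota_1(a))\,\phi_2(\iota_2(a))^{-1}=1 .
\]
The kernel of $\widetilde\Phi$ is a normal subgroup, so it contains the normal closure $N$.
\item By the first isomorphism theorem (the factorisation property of quotients), $\widetilde\Phi$ descends to a homomorphism $\Phi:G_1*_AG_2\to H$ with $\Phi\circ q=\widetilde\Phi$. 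Therefore $\Phi\circ j_i=\widetilde\Phi\circ k_i=\phi_i$.
\end{itemize}

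For uniqueness, use that $G_1*G_2$ is generated by $k_1(G_1)\cup k_2(G_2)$. Since $q$ is surjective, $G_1*_AG_2$ is generated by $j_1(G_1)\cup j_2(G_2)$. Any two homomorphisms that agree with $\phi_i$ on $j_i(G_i)$ agree on a generating set, and hence are equal.

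The only step needing care is the appeal to the universal property of the free product, which I take as standard. If one wants the argument self-contained, it can be verified via reduced words (normal form). Everything else is formal.
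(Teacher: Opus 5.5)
Your argument is correct and complete. It is the standard verification from the quotient definition $G_1*_AG_2=(G_1*G_2)/N$ that the paper adopts, using only the universal property of the free product, the factorisation property of quotients, and generation by $j_1(G_1)\cup j_2(G_2)$ (and never injectivity of $\iota_1,\iota_2$, which matches the paper's generality). The paper states this proposition without proof, treating it as standard background for the Ihara--Serre decomposition, so your proposal simply supplies the omitted routine check.
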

    \begin{prp}\label{JPS}\cite{Ser80}*{Corollary, p.36}
        Every finite subgroup of $G=G_1 *_A G_2$ can be conjugated inside either $G_1$ or $G_2.$
    \end{prp}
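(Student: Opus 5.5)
We would prove this along the lines of Bass--Serre theory, as in \cite{Ser80}, by letting $G=G_1*_A G_2$ act on a tree and showing that a finite subgroup must fix a vertex. Throughout we assume, as in Serre's setting, that $\iota_1$ and $\iota_2$ are injective, so that $A$ may be regarded as a common subgroup of $G_1$ and $G_2$.

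\emph{Step 1 (the tree).} We build a graph $X$ whose vertex set is the disjoint union $G/G_1\sqcup G/G_2$ and whose edge set is $G/A$. The edge $gA$ joins the vertices $gG_1$ and $gG_2$. This is well defined because $A\subseteq G_1\cap G_2$. The group $G$ acts on $X$ by left multiplication. Since every edge joins a vertex of type $G/G_1$ to one of type $G/G_2$, and $G$ preserves types, no element of $G$ can interchange the two endpoints of an edge; that is, $G$ acts without inversions. The stabiliser of the vertex $gG_i$ is $gG_ig^{-1}$.

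\emph{Step 2 ($X$ is a tree).} We first show that $X$ is connected. Because $G$ is generated by $j_1(G_1)\cup j_2(G_2)$, it suffices to connect $hG_1$ to $hsG_1$ for each $s\in G_1\cup G_2$. If $s\in G_1$, then $hsG_1=hG_1$ and there is nothing to do. If $s\in G_2$, then the path $hG_1 - hG_2 = hsG_2 - hsG_1$ does the job. Next we show that $X$ has no circuits. A reduced circuit would produce a nontrivial reduced word $g_1g_2\cdots g_n$ whose letters alternate between $G_1\setminus A$ and $G_2\setminus A$ and which lies in $A$. The normal form theorem for amalgamated free products, which relies on the injectivity of $\iota_1$ and $\iota_2$, rules this out. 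This step is the main obstacle. We would cite the normal form theorem (\cite{Ser80}, Theorem~1 of Chapter~I.1) rather than reprove it.

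\emph{Step 3 (fixed point).} Let $F\le G$ be finite and fix a vertex $v$. The orbit $Fv$ is finite, so the smallest subtree $Y$ containing it is a finite tree, and $Y$ is $F$-invariant. A finite tree has a canonical centre, obtained by repeatedly deleting all terminal vertices; the centre is either a single vertex or a single edge. Any automorphism of $Y$ preserves this centre, so the centre is $F$-invariant. If the centre is an edge, $F$ either fixes both endpoints or swaps them, and swapping is impossible since the action has no inversions. In either case $F$ fixes some vertex $gG_i$. Hence $F\subseteq gG_ig^{-1}$, that is, $g^{-1}Fg\subseteq G_i$, which is the claim.
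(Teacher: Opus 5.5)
Your proposal is correct. It is essentially the argument in Serre's \emph{Trees} that the paper cites without giving a proof: build the Bass--Serre tree with vertices $G/G_1\sqcup G/G_2$ and edges $G/A$, use the normal form theorem to show it has no circuits, then take the centre of the finite $F$-invariant subtree spanned by an orbit, which together with the absence of inversions gives a fixed vertex whose stabiliser is a conjugate $gG_ig^{-1}$. Your standing assumption that $\iota_1,\iota_2$ are injective costs nothing: in general $G\cong j_1(G_1)*_{C}j_2(G_2)$ with $C=j_1\iota_1(A)$, and the paper only uses the result for $\mathrm{SL}_2(\mathbb Z[1/p])\cong G_1*_{G_1\cap G_2}G_2$, where the inclusions are injective.
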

    Now we can prove the main theorem of this section:
    \begin{thm}
        Every nontrivial finite cyclic subgroup of \(\mathrm{SL}_2(\mathbb Z[1/p])\) has order \(2,3,4\), or \(6\). Up to conjugacy, such a subgroup is generated by one of the following matrices:
        \[
         W_2\quad (\text{ for } \Z_2)\quad\quad,\quad\quad W_3,\,W_{3,p}\quad(\text{ for } \Z_3),
        \]
        \[
            W_4,\,W_{4,p}\quad (\text{ for } \Z_4)\quad\quad,\quad\quad W_6,W_{6,p}\quad(\text{ for } \Z_6).
        \]
    \end{thm}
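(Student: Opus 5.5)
The plan is to exploit the action of \(\mathrm{SL}_2(\mathbb Z[1/p])\) on the Bruhat--Tits tree of \(\mathrm{SL}_2(\mathbb Q_p)\), which gives an amalgam decomposition, and then combine Proposition~\ref{JPS} with the classical classification of finite cyclic subgroups of \(\mathrm{SL}_2(\mathbb Z)\). The statement about orders is already the content of the first proposition of this section. What remains is the conjugacy classification.

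\textbf{Step 1 (amalgam).} I would invoke Serre's theorem (\cite{Ser80}, Chapter~II, \S1.4) that
\[
\mathrm{SL}_2(\mathbb Z[1/p])\cong \mathrm{SL}_2(\mathbb Z)*_{\Gamma_0(p)} g\,\mathrm{SL}_2(\mathbb Z)\,g^{-1},
\qquad g=\begin{pmatrix}1&0\\0&1/p\end{pmatrix}\in \mathrm{GL}_2(\mathbb Z[1/p]).
\]
Here the two factors are the stabilisers of two adjacent vertices of the tree. The amalgamated subgroup is their intersection, which is a congruence subgroup of level \(p\) (either \(\Gamma_0(p)\) or its transpose, depending on the normalisation of \(g\)). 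The group acts on the tree without inversions, and the quotient is a single edge. This is the step I regard as the main obstacle. It is not proved in the paper, so one must either cite Serre precisely or sketch the tree argument. One must also check that the specific \(g\) chosen matches the edge that Serre's theorem uses.

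\textbf{Step 2 (reduction).} Let \(F\) be a nontrivial finite cyclic subgroup. By Proposition~\ref{JPS}, \(F\) is conjugate in \(\mathrm{SL}_2(\mathbb Z[1/p])\) to a subgroup of either \(\mathrm{SL}_2(\mathbb Z)\) or \(g\,\mathrm{SL}_2(\mathbb Z)\,g^{-1}\). This conjugation takes place inside \(\mathrm{SL}_2(\mathbb Z[1/p])\), even though \(g\) itself has determinant \(1/p\). In the first case, I use the classical fact (\cite{Con}, via the amalgam \(\mathrm{SL}_2(\mathbb Z)\cong \mathbb Z_4*_{\mathbb Z_2}\mathbb Z_6\) and Proposition~\ref{JPS} once more) that every finite subgroup of \(\mathrm{SL}_2(\mathbb Z)\) is conjugate in \(\mathrm{SL}_2(\mathbb Z)\) to a subgroup of \(\langle W_4\rangle\) or \(\langle W_6\rangle\). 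The cyclic subgroups of these are \(\langle W_2\rangle\), \(\langle W_4\rangle\), \(\langle W_6\rangle\) and \(\langle W_6^2\rangle\). The last is conjugate to \(\langle W_3\rangle\), since it has trace \(-1\) and we can compare with \(W_3\) directly or via the inverse. This yields the generators \(W_2,W_3,W_4,W_6\).

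\textbf{Step 3 (second factor).} Conjugating the list from Step~2 by \(g\) gives a direct computation:
\[
gW_4g^{-1}=\begin{pmatrix}0&p\\-1/p&0\end{pmatrix}=W_{4,p},
\]
and similarly \(gW_3g^{-1}=W_{3,p}\) and \(gW_6g^{-1}=W_{6,p}\). The element \(W_2=-I_2\) is central, so it is unchanged. Hence in the second case \(F\) is conjugate to a group generated by one of \(W_2,W_{3,p},W_{4,p},W_{6,p}\). Putting the two cases together with the order proposition gives the stated list. I would add a remark that the list may contain redundancies, for instance when \(W_{k,p}\) happens to be conjugate to \(W_k\) in \(\mathrm{SL}_2(\mathbb Z[1/p])\). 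The theorem only claims that every such subgroup is conjugate to one on the list, so these redundancies do not matter.
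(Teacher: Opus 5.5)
Your proposal is correct and follows essentially the same route as the paper: the Ihara--Serre amalgam \(\mathrm{SL}_2(\mathbb Z)*_{G_1\cap G_2}D_p\,\mathrm{SL}_2(\mathbb Z)\,D_p^{-1}\), Proposition~\ref{JPS}, the classification of finite subgroups of \(\mathrm{SL}_2(\mathbb Z)\), and then conjugation by \(D_p=\mathrm{diag}(p,1)\), which acts exactly as your \(g=\mathrm{diag}(1,1/p)=p^{-1}D_p\). Two small corrections: for this \(g\) the amalgamated subgroup is \(\{b\equiv 0 \pmod p\}\), which is the transpose of \(\Gamma_0(p)\) rather than \(\Gamma_0(p)\) as in your display, and in Step~2 no trace argument is needed because \(W_6^2=W_3\) exactly.
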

    \begin{proof}
        Let $D_p$ be a matrix in $\rm{GL}_2(\Z[1/p])$, defined by $D_p=\begin{pmatrix}
            p & 0\\
            0 & 1\\
        \end{pmatrix}.$
        Consider the following groups:
        \[G_1=\mathrm{SL}_2(\mathbb Z),\,
        G_2=D_p\mathrm{SL}_2(\mathbb Z)D_p^{-1}\quad \text{ and } \quad G_1\cap G_2=
        \left\{
        \begin{pmatrix}
        a&b\\
        c&d
        \end{pmatrix}
        \in \mathrm{SL}_2(\mathbb Z)
        :
        b\equiv 0 \pmod p
        \right\}.\]
        We use the standard amalgamated product decomposition
        \[
        \mathrm{SL}_2(\mathbb Z[1/p])
        \cong
        G_1*_{G_1\cap G_2}G_2,
        \]
        This is the usual Ihara--Serre decomposition of
        \(\mathrm{SL}_2(\mathbb Z[1/p])\)  \cite{Ser80}*{p.80}. 

        Let $F$ be a finite subgroup of $\rm{SL}_2(\Z[1/p]).$ Using Proposition~\ref{JPS}, we conclude that $F$ is conjugated inside either $\rm{SL}_2(\Z)$ or $D_p\mathrm{SL}_2(\Z)D_p^{-1}.$ First suppose that
        $F\subseteq G_1=\mathrm{SL}_2(\mathbb Z).$ The finite subgroups of \(\mathrm{SL}_2(\mathbb Z)\) are well known: up to
        conjugacy, they are generated by
        \[
        W_2,\quad W_3,\quad W_4,\quad W_6.
        \]
        Thus in this case \(F\) is conjugate to one of
        \[
        \langle W_2\rangle,\quad
        \langle W_3\rangle,\quad
        \langle W_4\rangle,\quad
        \langle W_6\rangle.
        \]
        
        Now suppose that
        \[
        F\subseteq G_2=D_p\mathrm{SL}_2(\mathbb Z)D_p^{-1}.
        \]
        Then
        \[
        D_p^{-1}FD_p\subseteq \mathrm{SL}_2(\mathbb Z).
        \]
        Again using the classification of finite subgroups of
        \(\mathrm{SL}_2(\mathbb Z)\), the subgroup \(D_p^{-1}FD_p\) is conjugate in
        \(\mathrm{SL}_2(\mathbb Z)\) to one of
        \[
        \langle W_2\rangle,\quad
        \langle W_3\rangle,\quad
        \langle W_4\rangle,\quad
        \langle W_6\rangle.
        \]
        Conjugating back by \(D_p\), it follows that \(F\) is conjugate in
        \(\Gamma\) to one of
        \[
        \langle D_pW_2D_p^{-1}\rangle,\quad
        \langle D_pW_3D_p^{-1}\rangle,\quad
        \langle D_pW_4D_p^{-1}\rangle,\quad
        \langle D_pW_6D_p^{-1}\rangle.
        \]
        But
        \[
        D_pW_2D_p^{-1}=W_2,
        \]
        while
        \[
        D_pW_3D_p^{-1}=W_{3,p},
        \]
        \[
        D_pW_4D_p^{-1}=W_{4,p},
        \]
        and
        \[
        D_pW_6D_p^{-1}=W_{6,p}.
        \]
        Therefore, in the second case, \(F\) is conjugate to one of
        \[
        \langle W_2\rangle,\quad
        \langle W_{3,p}\rangle,\quad
        \langle W_{4,p}\rangle,\quad
        \langle W_{6,p}\rangle.
        \]
        Combining the two cases, every finite subgroup of
        \(\mathrm{SL}_2(\mathbb Z[1/p])\) is conjugate to one of
        \[
        \langle W_2\rangle,\quad
        \langle W_3\rangle,\quad
        \langle W_{3,p}\rangle,\quad
        \langle W_4\rangle,\quad
        \langle W_{4,p}\rangle,\quad
        \langle W_6\rangle,\quad
        \langle W_{6,p}\rangle.
        \]
        \end{proof}
        
    The above list is a complete list of representatives, but it need not be
    minimal for every prime \(p\). Depending on the congruence class of \(p\),
    some of the \(p\)-twisted representatives may be conjugate to the untwisted
    ones.
    \begin{prp}
    Let $p$ be an odd prime. The following conjugacy criteria hold in $\SL_2(\Z[1/p])$:
    \[
    \langle W_{4,p}\rangle
    \sim
    \langle W_4\rangle
    \quad\Longleftrightarrow\quad
    p\equiv 1 \pmod 4,
    \]
    \[
    \langle W_{3,p}\rangle
    \sim
    \langle W_3\rangle
    \quad\Longleftrightarrow\quad
    p=3
    \ \text{or}\
    p\equiv 1 \pmod 3,
    \]
    and
    \[
    \langle W_{6,p}\rangle
    \sim
    \langle W_6\rangle
    \quad\Longleftrightarrow\quad
    p=3
    \ \text{or}\
    p\equiv 1 \pmod 3,
    \]
    where \(\sim\) denotes conjugacy of subgroups inside
    \(\mathrm{SL}_2(\mathbb Z[1/p])\).
    \end{prp}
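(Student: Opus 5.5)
The plan is to reduce each conjugacy question to a norm equation in an imaginary quadratic ring. The reduction rests on the identities already established in the proof of the preceding classification theorem:
\[
W_{3,p}=D_pW_3D_p^{-1},\qquad W_{4,p}=D_pW_4D_p^{-1},\qquad W_{6,p}=D_pW_6D_p^{-1},
\]
where $\det D_p=p$. Fix $W\in\{W_3,W_4,W_6\}$ and put $R=\Z[1/p]$.

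\textbf{Step 1: the subgroup condition.} A generator of $\langle W\rangle$ is $W^{\pm1}$. In all three cases $W^{-1}=TWT$ with $\det T=-1$; this can be checked directly, and it is the same trick as for $J$ and $P$ in Section~\ref{Morita eq}. Hence $\langle D_pWD_p^{-1}\rangle\sim\langle W\rangle$ in $\mathrm{SL}_2(R)$ if and only if there exist $g\in\mathrm{SL}_2(R)$ and $\varepsilon\in\{0,1\}$ with
\[
g\,W\,g^{-1}=D_p\,T^\varepsilon W T^{\varepsilon}\,D_p^{-1}.
\]
This holds exactly when $c:=T^\varepsilon D_p^{-1}g$ lies in the centraliser $Z(W)$ of $W$ in $\mathrm{GL}_2(R)$. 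So the question becomes whether $Z(W)$ contains an element of determinant $(-1)^\varepsilon p^{-1}$.

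\textbf{Step 2: the centraliser.} Since $W$ has irreducible characteristic polynomial $x^2+1$ or $x^2\pm x+1$, the vector $e_1$ is cyclic for $W$. Therefore the commutant of $W$ in $M_2(R)$ is $R[W]$. This ring is isomorphic to $\mathcal O:=\Z[i][1/p]$ for order $4$, and to $\mathcal O:=\Z[\zeta_3][1/p]$ for orders $3$ and $6$; the latter because $W_6$ is a polynomial in $W_3$ and vice versa, e.g.\ $W_6=-W_3^{-1}$ up to conjugation inside $\langle\cdot\rangle$, which I will verify. Under this isomorphism $Z(W)=\mathcal O^\times$ and $\det$ becomes the field norm $N$. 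Since norms from an imaginary quadratic field are positive, the case $\varepsilon=1$ is impossible. The criterion therefore reads:
\[
\langle W_{\cdot,p}\rangle\sim\langle W\rangle\iff p^{-1}\ (\text{equivalently } p)\in N(\mathcal O^\times).
\]
Scalars $p^k\in\mathcal O^\times$ have norm $p^{2k}$, so $p^{-1}$ and $p$ are interchangeable here.

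\textbf{Step 3: arithmetic.} Every unit of $\mathcal O$ has the form $\zeta\pi$, where $\zeta$ is a root of unity and $\pi$ is a product of Gaussian (respectively Eisenstein) primes dividing $p$. This uses that $\Z[i]$ and $\Z[\zeta_3]$ are PIDs. Consequently $p\in N(\mathcal O^\times)$ if and only if $p$ is not inert in $\Z[i]$ (respectively $\Z[\zeta_3]$), i.e.\ $p=a^2+b^2$ (respectively $p=a^2-ab+b^2$). Indeed, if $p=N(\pi)$ then $\pi\mid p$, so $\pi$ is a unit in $\mathcal O$. If $p$ is inert, all unit norms are even powers of $p$, so $p$ is not a unit norm. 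By the classical splitting laws, $p$ is a norm from $\Z[i]$ iff $p\equiv1\pmod4$, and a norm from $\Z[\zeta_3]$ iff $p=3$ (ramified, $3=N(1-\zeta_3)$) or $p\equiv1\pmod3$.

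\textbf{Main obstacle.} The delicate point is Step 1 together with the identification in Step 2. One must check that conjugacy of subgroups, not just of generators, is fully captured by allowing $W\mapsto W^{-1}$ via $T$. One must also check that $R[W]\cong\mathcal O$ exactly, including for $W_6$, and not merely up to finite index. I would settle these by explicit computation with $2\times2$ matrices: $R[W]$ is free over $R$ on $I,W$, and $W$ satisfies a monic polynomial whose root generates the maximal order $\Z[i]$ or $\Z[\zeta_3]$.
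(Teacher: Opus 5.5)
Your argument is correct in outline and has the same skeleton as the paper's proof. The paper also passes to $X=CD_p$ and shows that $X$ either lies in $\{aI+bW\}$ or belongs to an explicit family with negative definite determinant. This reduces everything to whether $p$ is represented by $a^2+b^2$, resp.\ $x^2-xy+y^2$, over $\mathbb{Z}[1/p]$.

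One identity you state is false. With $T=\mathrm{diag}(-1,1)$ one does have $TJT=J^{-1}$, but
\[
TW_3T=\begin{pmatrix}-1&-1\\1&0\end{pmatrix}\neq\begin{pmatrix}0&-1\\1&-1\end{pmatrix}=W_3^{-1},
\]
and likewise $TW_6T\neq W_6^{-1}$. The paper's trick only applies to matrices with equal diagonal entries. The repair is immediate: $S=\begin{pmatrix}0&1\\1&0\end{pmatrix}$ satisfies $\det S=-1$, $S=S^{-1}$ and $SWS=W^{-1}$ for each of $W_3,W_4,W_6$. So Step 1 goes through verbatim with $S$ in place of $T$.

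Two smaller points need fixing. First, the commutant claim needs $\{e_1,We_1\}$ to be an $R$-basis of $R^2$, not just a $\mathbb{Q}$-basis; this holds because the $(2,1)$-entry of each $W$ is $-1$. Second, units of $\mathcal{O}$ are $\zeta\prod\pi_j^{e_j}$ with $e_j\in\mathbb{Z}$, so negative exponents must be allowed (e.g.\ $1/p$).

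Where you genuinely differ from the paper is the arithmetic. The paper clears denominators to get $A^2+B^2=p^{2N+1}$, resp.\ $A^2-AB+B^2=p^{2N+1}$. It then runs an elementary descent using that $-1$ is a non-square mod $p$ when $p\equiv 3\pmod 4$, resp.\ that $\mathbb{F}_p$ has no primitive cube root of unity when $p\equiv 2\pmod 3$. For the converse it cites the two-square theorem and the Eisenstein-norm representation.

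You instead identify the centraliser with $\mathcal{O}^\times$ and read off $N(\mathcal{O}^\times)\in\{p^{\mathbb{Z}},p^{2\mathbb{Z}}\}$ from unique factorisation and the splitting laws in $\mathbb{Z}[i]$ and $\mathbb{Z}[\zeta_3]$. This is more conceptual, since it explains the criterion as ``$p$ is not inert''. It also settles the order-$6$ case for free: in fact $W_6=-W_3^{-1}=W_3+I$ exactly, so $W_6$ and $W_3$ have the same centraliser and $W_{6,p}=W_{3,p}+I$. The paper only asserts that case is proved ``in the same way''. The paper's route, in exchange, is self-contained, needing only quadratic residues rather than the arithmetic of imaginary quadratic rings.
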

    
    \begin{proof}
    \underline{(Proof for \(W_4\) and \(W_{4,p}\))}
    Let 
    $D_p=\begin{pmatrix}
    p&0\\
    0&1
    \end{pmatrix}.$ Then $W_{4,p}=D_pW_4D_p^{-1}.$ Suppose that $\langle W_{4,p}\rangle\sim \langle W_4\rangle$ inside \(\mathrm{SL}_2(\mathbb Z[1/p])\).
    Then there exists \(C\in \mathrm{SL}_2(\mathbb Z[1/p])\) such that
    \[
    CW_{4,p}C^{-1}=W_4
    \quad\text{or}\quad
    CW_{4,p}C^{-1}=W_4^{-1}.
    \]
    Since \(W_{4,p}=D_pW_4D_p^{-1}\), putting \(X=CD_p\), we get
    \[
    XW_4=W_4X
    \quad\text{or}\quad
    XW_4=W_4^{-1}X.
    \] Moreover, $\det(X)=\det(C)\det(D_p)=p.$
    
    If \(XW_4=W_4X\), then a direct computation gives
    \[
    X=
    \begin{pmatrix}
    a&b\\
    -b&a
    \end{pmatrix}
    \]
    for some \(a,b\in\mathbb Z[1/p]\). Hence
    \(
    p=\det(X)=a^2+b^2.
    \)
    If \(XW_4=W_4^{-1}X\), then a direct computation gives
    \[
    X=
    \begin{pmatrix}
    a&b\\
    b&-a
    \end{pmatrix},
    \]
    and hence
    \(
    \det(X)=-(a^2+b^2),
    \)
    which cannot equal \(p>0\). Therefore only the first case occurs, and
    \(
    p=a^2+b^2
    \)
    for some \(a,b\in\mathbb Z[1/p]\).
    Choose \(N\geq 0\) such that
    \(
    A=p^Na,\, B=p^Nb
    \)
    are integers. Then
    \(
    A^2+B^2=p^{2N+1}.
    \)
    
    Now we shall use the following elementary fact. If \(p\equiv 3\pmod 4\) and
    \(
        p\mid c^2+d^2,
    \)
    then
    \[
        p\mid c
        \qquad\text{and}\qquad
        p\mid d.
    \]
    Indeed, suppose first that \(p\nmid d\). Then \(d\) is invertible modulo \(p\).
    From
    \(
        c^2+d^2\equiv 0\pmod p,
    \)
    we obtain
    \[
        c^2\equiv -d^2\pmod p.
    \]
    Multiplying by \(d^{-2}\) modulo \(p\), we get
    \(
        (cd^{-1})^2\equiv -1\pmod p.
    \)
    Thus \(-1\) would be a quadratic residue modulo \(p\). This is impossible
    when \(p\equiv 3\pmod 4\). Hence \(p\mid d\). Returning to
    \(
        c^2+d^2\equiv 0\pmod p,
    \)
    we then get \(c^2\equiv 0\pmod p\), and therefore \(p\mid c\).

    Using this we note that \( p\mid A^2+B^2\) implies $p\mid A$ and $p\mid B$. Set $A=pA_1$ and $B=pB_1$. Hence $p^2\mid A^2+B^2$. Dividing both sides by $p^2$ gives $A_1^2+B_1^2=p^{2N-1}.$ Repeating this argument, we get
    \[
    p=A_N^2+B_N^2.
    \]
    But again $p\mid A_N^2+B_N^2$ implies $p\mid A_N$ and $p\mid B_N$. Then $p^2\mid A_N^2+B_N^2$, but $A_N^2+B_N^2=p$, a contradiction.
    So we conclude that
    \[
    p\equiv 1\pmod{4}.
    \]
    
    Conversely, suppose that \(p\equiv 1\pmod 4\). By the two-square theorem, there
    exist \(a,b\in\mathbb Z\) such that
    \(    p=a^2+b^2.
    \)
    Set
    \[
    X=
    \begin{pmatrix}
    a&b\\
    -b&a
    \end{pmatrix}.
    \]
    Then
    \(
    XW_4=W_4X
    \) and 
    \(
    \det(X)=p.
    \)
    Define
    \(
    C:=XD_p^{-1}.
    \)
    Since \(X\in\rm M_2(\mathbb Z)\) and
    \(D_p^{-1}\in\rm M_2(\mathbb Z[1/p])\), we have
    \(
    C\in\rm M_2(\mathbb Z[1/p]).
    \)
    Moreover,
    \(
    \det(C)=\det(X)\det(D_p^{-1})=p\cdot \frac1p=1.
    \)
    Thus
    \(
    C\in\mathrm{SL}_2(\mathbb Z[1/p]).
    \)
    Finally,
    \[
    CW_{4,p}C^{-1}
    =
    XD_p^{-1}(D_pW_4D_p^{-1})D_pX^{-1}
    =
    XW_4X^{-1}
    =
    W_4,
    \]
    because \(X\) commutes with \(W_4\). Hence
    \(
    \langle W_{4,p}\rangle\sim \langle W_4\rangle.
    \)
    Therefore
    \[
    \langle W_{4,p}\rangle
    \sim
    \langle W_4\rangle
    \quad\Longleftrightarrow\quad
    p\equiv 1\pmod 4.
    \]
    \underline{(Proof for \(W_3\) and \(W_{3,p}\))}
    Suppose that
    \(
    \langle W_{3,p}\rangle\sim \langle W_3\rangle
    \quad
    \text{inside } \mathrm{SL}_2(\mathbb Z[1/p]).
    \)
    Then there exists \(C\in \mathrm{SL}_2(\mathbb Z[1/p])\) such that
    \[
    CW_{3,p}C^{-1}=W_3
    \quad\text{or}\quad
    CW_{3,p}C^{-1}=W_3^{-1}.
    \]
    Since \(W_{3,p}=D_pW_3D_p^{-1}\), putting \(X=CD_p\), we get
    \(
    XW_3=W_3X
    \) or
    \(
    XW_3=W_3^{-1}X.
    \)
    Moreover,
    \(
    \det(X)=\det(C)\det(D_p)=p.
    \)
    
    If \(XW_3=W_3X\), then a direct computation gives
    \[
    X=
    \begin{pmatrix}
    x-y&y\\
    -y&x
    \end{pmatrix}
    \]
    for some \(x,y\in\mathbb Z[1/p]\). Hence
    \(
    p=\det(X)=x^2-xy+y^2.
    \)
    
    If \(XW_3=W_3^{-1}X\), then a direct computation gives
    \[
    X=
    \begin{pmatrix}
    -x&x+y\\
    y&x
    \end{pmatrix}
    \]
    for some \(x,y\in\mathbb Z[1/p]\). Hence
    \(
    \det(X)=-(x^2+xy+y^2).
    \)
    This cannot be equal to \(p>0\), because \(x^2+xy+y^2\geq 0\). Therefore only the first case occurs, and so
    \[
    p=x^2-xy+y^2
    \]
    for some \(x,y\in\mathbb Z[1/p]\).
    
    Choose \(N\geq 0\) such that
    \(
    A=p^Nx
    \) and
    \(
    B=p^Ny
    \)
    are integers. Then
    \(
    A^2-AB+B^2=p^{2N+1}.
    \)
    Now we use the following elementary fact. If \(p\neq 3\), \(p\equiv 2\pmod 3\), and
    \(
    p\mid c^2-cd+d^2,
    \)
    then
    \[
    p\mid c
    \qquad\text{and}\qquad
    p\mid d.
    \]
    Indeed, suppose first that \(p\nmid d\). Then \(d\) is invertible modulo \(p\). From
    \(
    c^2-cd+d^2\equiv 0\pmod p
    \)
    we obtain
    \[
    (c d^{-1})^2-(c d^{-1})+1\equiv 0\pmod p.
    \]
    Thus the polynomial
    \(
    T^2-T+1
    \)
    has a root in \(\mathbb F_p\). If \(u\) is such a root, then \(u\neq -1\) and
    \[
    u^3+1=(u+1)(u^2-u+1)\equiv 0\pmod p.
    \]
    Hence \(u^3\equiv -1\pmod p\), and so \((-u)^3\equiv 1\pmod p\). Since
    \(-u\neq 1\), this gives a nontrivial third root of unity in \(\mathbb F_p\).
    This is impossible when \(p\equiv 2\pmod 3\), because then \(3\nmid p-1\).
    Therefore \(p\mid d\). Returning to
    \(
    c^2-cd+d^2\equiv 0\pmod p,
    \)
    we get \(c^2\equiv 0\pmod p\), and hence \(p\mid c\).
    
    Using this fact, if \(p\neq 3\) and \(p\equiv 2\pmod 3\), then
    \(
    p\mid A^2-AB+B^2
    \)
    implies \(p\mid A\) and \(p\mid B\). Write
    \(
    A=pA_1\)
    and
    \(
    B=pB_1.
    \)
    Then
    \[
    A^2-AB+B^2
    =
    p^2(A_1^2-A_1B_1+B_1^2).
    \]
    Dividing the equality
    \(
    A^2-AB+B^2=p^{2N+1}
    \)
    by \(p^2\), we get
    \[
    A_1^2-A_1B_1+B_1^2=p^{2N-1}.
    \]
    Repeating this argument, we eventually obtain
    \[
    p=A_N^2-A_NB_N+B_N^2.
    \]
    But again the elementary fact implies that
    \[
    p\mid A_N
    \qquad\text{and}\qquad
    p\mid B_N.
    \]
    Hence
    \[
    p^2\mid A_N^2-A_NB_N+B_N^2,
    \]
    which contradicts
    \[
    A_N^2-A_NB_N+B_N^2=p.
    \]
    Therefore \(p\neq 3\) and \(p\equiv 2\pmod 3\) is impossible. Hence
    \[
    p=3
    \quad\text{or}\quad
    p\equiv 1\pmod 3.
    \]
    
    Conversely, suppose that
    \(
    p=3
    \)
    or
    \(
    p\equiv 1\pmod 3.
    \)
    If \(p=3\), then
    \[
    3=2^2-2\cdot 1+1^2.
    \]
    If \(p\equiv 1\pmod 3\), then by the classical representation theorem for
    the Eisenstein norm, there exist \(x,y\in\mathbb Z\) such that
    \[
    p=x^2-xy+y^2.
    \]
    In both cases, choose \(x,y\in\mathbb Z\) such that
    \[
    p=x^2-xy+y^2.
    \]
    Set
    \[
    X=
    \begin{pmatrix}
    x-y&y\\
    -y&x
    \end{pmatrix}.
    \]
    Then
    \(
    XW_3=W_3X
    \)
    and
    \(
    \det(X)=p.
    \)
    Define
    \(
    C:=XD_p^{-1}.
    \)
    Since \(X\in \rm M_2(\mathbb Z)\) and \(D_p^{-1}\in\rm M_2(\mathbb Z[1/p])\), we have
    \(
    C\in\rm M_2(\mathbb Z[1/p]).
    \)
    Moreover,
    \[
    \det(C)=\det(X)\det(D_p^{-1})=p\cdot \frac1p=1.
    \]
    Thus
    \(
    C\in \mathrm{SL}_2(\mathbb Z[1/p]).
    \)
    Finally,
    \[
    CW_{3,p}C^{-1}
    =
    XD_p^{-1}(D_pW_3D_p^{-1})D_pX^{-1}
    =
    XW_3X^{-1}
    =
    W_3,
    \]
    because \(X\) commutes with \(W_3\). Hence
    \(
    \langle W_{3,p}\rangle\sim \langle W_3\rangle.
    \)
    Therefore
    \[
    \langle W_{3,p}\rangle
    \sim
    \langle W_3\rangle
    \quad\Longleftrightarrow\quad
    p=3\ \text{or}\ p\equiv 1\pmod 3.
    \]
    The order \(6\) case is proved in the same way as the order \(3\) case.
    Indeed, a direct computation of the centraliser of \(W_6\) gives matrices
    whose determinants are again represented by the Eisenstein norm
    \(
        x^2-xy+y^2,
    \)
    while the inverse-generator case gives the negative definite form
    \(
        -(x^2+xy+y^2).
    \)
    Hence the same argument shows that
    \[
    \langle W_{6,p}\rangle
    \sim
    \langle W_6\rangle
    \quad\Longleftrightarrow\quad
    p=3\ \text{or}\ p\equiv 1\pmod 3.
    \]
    \end{proof}

    \begin{thm}
    Let \(p\) be an odd prime. Then the following statements hold.
    
    \begin{enumerate}
        \item If \(p\equiv 1\pmod 4\), then
        \[
        \A_{\af}^{\om}\rtimes_{W_{4,p}}\mathbb Z_4
        \cong
        \A_{\af}^{\om}\rtimes_{W_4}\mathbb Z_4.
        \]
    
        \item If \(p=3\) or \(p\equiv 1\pmod 3\), then
        \[
        \A_{\af}^{\om}\rtimes_{W_{3,p}}\mathbb Z_3
        \cong
        \A_{\af}^{\om}\rtimes_{W_3}\mathbb Z_3.
        \]
    
        \item If \(p=3\) or \(p\equiv 1\pmod 3\), then
        \[
        \A_{\af}^{\om}\rtimes_{W_{6,p}}\mathbb Z_6
        \cong
        \A_{\af}^{\om}\rtimes_{W_6}\mathbb Z_6.
        \]
    \end{enumerate}
    \end{thm}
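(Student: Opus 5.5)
The plan is to derive all three isomorphisms from one general fact: conjugate subgroups of \(\mathrm{SL}_2(\mathbb Z[1/p])\) give isomorphic crossed products, and then feed in the conjugacy criteria of the preceding proposition. Let \(C\in \mathrm{SL}_2(\mathbb Z[1/p])\), let \(H\le \mathrm{SL}_2(\mathbb Z[1/p])\) be a subgroup, and set \(H'=CHC^{-1}\). The automorphism \(\lambda^\alpha_C\) of \(\A^\om_\af\) from Subsection~\ref{action} intertwines the two actions:
\[
\lambda^\alpha_C\circ\lambda^\alpha_h\circ(\lambda^\alpha_C)^{-1}=\lambda^\alpha_{ChC^{-1}}\qquad (h\in H).
\]
This holds because \(M\mapsto\lambda^\alpha_M\) is a group homomorphism; on canonical unitaries both sides send \(W_x\) to \(W_{ChC^{-1}x}\). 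It follows that the pair \((\lambda^\alpha_C,\ h\mapsto ChC^{-1})\) is an isomorphism of \(\mathrm C^*\)-dynamical systems from \((\A^\om_\af,H,\lambda^\alpha)\) to \((\A^\om_\af,H',\lambda^\alpha)\). The standard functoriality of crossed products (see \cite{Wil07}) then gives
\[
\A^\om_\af\rtimes H\cong \A^\om_\af\rtimes H'.
\]
One can also see this directly via the twisted group algebra picture \(C^*(\Gm_p\rtimes H,\widetilde\om_\af)\): the map \((x,h)\mapsto(Cx,ChC^{-1})\) is a group isomorphism \(\Gm_p\rtimes H\to\Gm_p\rtimes H'\). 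By the \(\mathrm{SL}_2(\mathbb Z[1/p])\)-invariance of \(\om_\af\), this map carries \(\widetilde\om_\af\) to \(\widetilde\om_\af\):
\[
\om_\af(Cx,ChC^{-1}Cy)=\om_\af(Cx,Chy)=\om_\af(x,hy).
\]

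Next I specialise. For (1), assume \(p\equiv1\pmod4\). The previous proposition gives \(C\) with \(C\langle W_{4,p}\rangle C^{-1}=\langle W_4\rangle\); in fact its proof produces \(CW_{4,p}C^{-1}=W_4\) on the nose, with \(C=XD_p^{-1}\). For (2), assume \(p=3\) or \(p\equiv1\pmod3\); the proof gives \(CW_{3,p}C^{-1}=W_3\). For (3), under the same hypothesis, I use the analogous centraliser matrix for \(W_6\), namely \(X\) commuting with \(W_6\) with \(\det X=p\) given by the Eisenstein norm. Then \(C=XD_p^{-1}\) conjugates \(W_{6,p}=D_pW_6D_p^{-1}\) to \(W_6\).

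In each case the crossed product notation \(\rtimes_{W}\mathbb Z_n\) means the crossed product by the cyclic subgroup \(\langle W\rangle\cong\mathbb Z_n\), with \(\mathbb Z_n\) acting through \(k\mapsto \lambda^\alpha_{W^k}\). Since \(C\) sends the generator \(W_{n,p}\) to the generator \(W_n\), the induced identification of cyclic groups is the identity on \(\mathbb Z_n\). So the covariant-pair argument applies verbatim: given a covariant representation \((\pi,u)\) for \((\A^\om_\af,\lambda_{W_n})\), the pair \((\pi\circ\lambda^\alpha_C,u)\) is covariant for \((\A^\om_\af,\lambda_{W_{n,p}})\). This gives mutually inverse maps of universal crossed products. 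The groups are finite, hence amenable, so full and reduced crossed products agree and no subtlety arises.

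The main obstacle is case (3), where the previous proposition only sketches the order-\(6\) argument. I would verify explicitly that the centraliser of \(W_6=\begin{pmatrix}0&1\\-1&1\end{pmatrix}\) consists of the matrices \(xI+yW_6\). Their determinant is
\[
\det(xI+yW_6)=x^2+xy+y^2,
\]
which is again an Eisenstein norm and represents \(p\) under the stated hypothesis. (Alternatively, note \(W_6=-W_3^{-1}\) up to the appropriate sign, so the \(C\) conjugating \(W_{3,p}\) to \(W_3\) automatically conjugates \(W_{6,p}=-W_{3,p}^{-1}\)-type elements accordingly.) I will check this by direct computation.
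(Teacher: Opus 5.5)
Your proposal is correct and takes the route the paper intends. The paper states this theorem without a separate proof, as an immediate consequence of the preceding conjugacy criteria, and your argument supplies the implicit step: conjugation by $C\in\mathrm{SL}_2(\mathbb Z[1/p])$ gives the equivariant automorphism $\lambda^\alpha_C$, hence isomorphic crossed products. For the order-$6$ case your explicit check works, and the hedge is unnecessary: $W_6=-W_3^{-1}$ holds exactly (so $W_{6,p}=-W_{3,p}^{-1}$), and the same $C$ with $CW_{3,p}C^{-1}=W_3$ gives $CW_{6,p}C^{-1}=W_6$.
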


   \noindent We now write down all the unitary operators associated with the matrices 
    \[
    W_2,\quad W_3,\quad W_{3,p},\quad W_4,\quad W_{4,p}, \quad W_6,\quad W_{6,p}.
    \] 
    We express these matrices as products of the generators \(J\), \(P\), and \(D\),
    and define the corresponding unitary operators as compositions of \(S_J\), \(S_P\), and \(S_D\). These representatives will be used to define finite cyclic actions on the Heisenberg bimodule and hence to obtain Morita equivalences for the associated crossed products.

    \[
    \renewcommand{\arraystretch}{1.2}
    \setlength{\arraycolsep}{5pt}
    \begin{array}{|c|c|c|c|}
    \hline
    \rule{0pt}{1.5em}\text{Matrix}
    &
    \rule{0pt}{1.5em}\text{Matrix decomposition}
    &
    \rule{0pt}{1.5em}\text{Associated unitary operator}
    &
    \rule{0pt}{1.5em}\text{Cyclic group}
    \\
    \hline
    W_2=\begin{pmatrix}
    -1 & 0\\
    0 & -1
    \end{pmatrix}
    &
    W_2=J^2
    &
    S_{W_2}:=S_{J^2}
    &
    \langle W_2\rangle\cong \mathbb Z_2
    \\[1.2em]
    \hline
    W_3=\begin{pmatrix}
    -1 & 1\\
    -1 & 0
    \end{pmatrix}
    &
    W_3=JP^{-1}
    &
    S_{W_3}:=S_{JP^{-1}}=S_JS_P^{-1}
    &
    \langle W_3\rangle\cong \mathbb Z_3
    \\[1.2em]
    \hline
    W_4=\begin{pmatrix}
    0 & 1\\
    -1 & 0
    \end{pmatrix}
    &
    W_4=J
    &
    S_{W_4}:=S_J
    &
    \langle W_4\rangle\cong \mathbb Z_4
    \\[1.2em]
    \hline
    W_6=\begin{pmatrix}
    0 & 1\\
    -1 & 1
    \end{pmatrix}
    &
    W_6=PJ
    &
    S_{W_6}:=S_{PJ}=S_PS_J
    &
    \langle W_6\rangle\cong \mathbb Z_6
    \\[1.2em]
    \hline
    W_{4,p}=\begin{pmatrix}
    0 & p\\
    -\frac{1}{p} & 0
    \end{pmatrix}
    &
    W_{4,p}=DJ
    &
    S_{W_{4,p}}:=S_{DJ}=S_DS_J
    &
    \langle W_{4,p}\rangle\cong \mathbb Z_4
    \\[1.2em]
    \hline
    W_{6,p}=\begin{pmatrix}
    0 & p\\
    -\frac{1}{p} & 1
    \end{pmatrix}
    &
    W_{6,p}=DP^pJ
    &
    S_{W_{6,p}}:=S_{DP^pJ}=S_D S_P^p S_J
    &
    \langle W_{6,p}\rangle\cong \mathbb Z_6
    \\[1.2em]
    \hline
    W_{3,p}=\begin{pmatrix}
    -1 & p\\
    -\frac{1}{p} & 0
    \end{pmatrix}
    &
    W_{3,p}=(DP^pJ)^2
    &
    S_{W_{3,p}}:=S_{W_{6,p}}^2
    &
    \langle W_{3,p}\rangle\cong \mathbb Z_3
    \\[1.2em]
    \hline
    \end{array}
    \]
    \smallskip
    
    \begin{thm}
    Let \(A\in \mathrm{SL}_2(\mathbb Z[1/p])\) be of finite order \(n\), where $ n\in\{1,2,3,4,6\}.$ Let \(S_A\) be a unitary operator satisfying the covariance relation:
    \[
    S_A \W(x,\xi)S_A^{-1}=\W(A(x,\xi))
    \]
    for all \((x,\xi)\). Then $(S_A)^n=\lambda I$ for some \(\lambda\in \mathbb T\).
    \end{thm}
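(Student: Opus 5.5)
The plan is to iterate the covariance relation $n$ times and then apply the irreducibility of the Weyl system, exactly as in Lemma~\ref{lem:implementer-uniqueness}.

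First, I will show by induction on $k$ that
\[
(S_A)^k\,\W(z)\,(S_A)^{-k}=\W(A^kz)
\]
for every $k\ge 1$ and every $z=(x,\xi)\in M\times M$. The case $k=1$ is the hypothesis. For the inductive step,
\[
(S_A)^{k+1}\W(z)(S_A)^{-(k+1)}=S_A\bigl((S_A)^k\W(z)(S_A)^{-k}\bigr)S_A^{-1}=S_A\W(A^kz)S_A^{-1}=\W(A^{k+1}z).
\]
The only facts used here are that $(A^kz)$ is again a point of $M\times M$ and that the hypothesis holds for all points of $M\times M$. The action \eqref{eq:symplectic-action} guarantees this, since it is an action by automorphisms.

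Second, I will take $k=n$. Because $A^n=I_2$, this gives $(S_A)^n\W(z)(S_A)^{-n}=\W(z)$ for all $z$. In other words, the unitary $(S_A)^n$ commutes with every Weyl operator. Lemma~\ref{lem:weyl-irreducible} states that the commutant of the Weyl system is $\mathbb C I$, so $(S_A)^n=\lambda I$ for some $\lambda\in\mathbb C$. Since $(S_A)^n$ is unitary, $|\lambda|=1$. Equivalently, one can apply Lemma~\ref{lem:implementer-uniqueness} with $U=(S_A)^n$ and $V=I$.

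The argument is essentially formal. The one point that needs care is that all operator identities should be read on the whole Hilbert space $\mathcal H$, not only on $\mathcal S(M)$. This is harmless because every operator involved is unitary and the identities extend by density, as in the proof of Lemma~\ref{lem:canonical-commutation}.

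I also plan to remark that the scalar $\lambda$ cannot in general be normalised to $1$ from this argument alone, since $\pi$ is only a projective representation. This does not affect the later inner-product identities, because a scalar of modulus one cancels in both inner products. If a genuine $\mathbb Z_n$-action on the bimodule is needed later, one replaces $S_A$ by $\mu S_A$ with $\mu^n=\bar\lambda$. Then $(\mu S_A)^n=I$, and the covariance relation and the inner-product compatibilities are unchanged.
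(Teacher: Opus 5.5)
Your proposal is correct and follows the paper's own argument: iterate the covariance relation so that $(S_A)^n$ commutes with every Weyl operator, then use irreducibility (Lemma~\ref{lem:weyl-irreducible}) and unitarity to get a unimodular scalar. Your closing remark about rescaling by $\mu$ with $\mu^n=\overline{\lambda}$ is exactly the paper's subsequent corollary.
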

    
    \begin{proof}
    Since \(A\) has order \(n\), we have
    \(
    A^n=I.
    \)
    Using the covariance relation repeatedly, we obtain
    \[
    \begin{aligned}
    (S_A)^n \W(x,\xi)(S_A)^{-n}
    &=\W(A^n(x,\xi))=\W(x,\xi).
    \end{aligned}
    \]
    Thus \((S_A)^n\) commutes with every Weyl operator \(\W(x,\xi)\).
    
    Since the Weyl representation \(\W\) is irreducible, Schur's lemma implies that any bounded operator commuting with all \(\W(x,\xi)\) must be a scalar multiple of the identity. Hence there exists \(\lambda\in\mathbb C\) such that
    \[
    (S_A)^n=\lambda I.
    \]
    Finally, since \(S_A\) is unitary, \((S_A)^n\) is also unitary. Therefore
    \(
    |\lambda|=1,
    \)
    and so \(\lambda\in\mathbb T\).
    \end{proof}
    \begin{cor}
    Let \(A\in \mathrm{SL}_2(\mathbb Z[1/p])\) be a finite-order matrix of exact
    order \(n\). Then there exists a unitary operator \(\widetilde S_A\) satisfying
    the same covariance relation
    \[
    \widetilde S_A \W(x,\xi)\widetilde S_A^{-1}=\W(A(x,\xi))
    \]
    and such that
    \(
    (\widetilde S_A)^n=I.
    \)
    In particular, for each of the finite-order matrices
    \(
    W_i
    \) and
    \(
    W_{i,p} 
    \) where $i=2,3,4,6,$ one can choose implementing unitaries
    \(
    \widetilde S_{W_i},\, \widetilde S_{W_{i,p}}
    \)
    whose powers satisfy \((\widetilde S_{W_i})^i=I\) and \((\widetilde S_{W_{i,p}})^i=I.
    \)
    \end{cor}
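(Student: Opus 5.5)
The plan is to rescale by an $n$-th root of the scalar supplied by the preceding theorem. Fix a finite-order $A$ of exact order $n$, and let $S_A$ be any unitary implementer of $A$. For instance, take the word-implementer of Theorem~\ref{thm:projective-metaplectic}, which satisfies $S_A\W(z)S_A^{-1}=\W(Az)$ for all $z\in M\times M$. By the preceding theorem, $(S_A)^n=\lambda I$ for some $\lambda\in\mathbb T$. Choose any $\mu\in\mathbb T$ with $\mu^n=\lambda$; such a $\mu$ exists since $\mathbb T$ is divisible, e.g.\ $\mu=e^{i\theta/n}$ where $\lambda=e^{i\theta}$. Then set
\[
\widetilde S_A:=\overline{\mu}\,S_A .
\]

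Next I would verify the two required properties. Since $\overline{\mu}$ is a central scalar, conjugation by $\widetilde S_A$ coincides with conjugation by $S_A$. Hence
\[
\widetilde S_A\W(x,\xi)\widetilde S_A^{-1}=\overline{\mu}\mu\, S_A\W(x,\xi)S_A^{-1}=\W(A(x,\xi)).
\]
Moreover, $(\widetilde S_A)^n=\overline{\mu}^{\,n}(S_A)^n=\overline{\lambda}\lambda I=I$, because $|\lambda|=1$. Unitarity is clear, and so is the preservation of $\mathcal S(\Q_p\times\R)$, since rescaling does not affect either property.

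For the particular matrices $W_i$ and $W_{i,p}$ with $i=2,3,4,6$, I would start from the explicit operators in the table: $S_{J^2}$, $S_JS_P^{-1}$, $S_J$, $S_PS_J$, $S_DS_J$, $S_DS_P^pS_J$ and $S_{W_{6,p}}^2$. Each is a composition of implementers of generators, so by \eqref{eq:word-covariance} it satisfies the covariance relation. The previous theorem then applies with $n=i$, since each matrix has exact order $i$ by the order classification proposition, and the rescaling above gives $\widetilde S_{W_i}$ and $\widetilde S_{W_{i,p}}$ with $i$-th power equal to $I$.

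There is no real obstacle here. The only point to watch is that $\mu$ is determined only up to an $n$-th root of unity. This ambiguity is harmless, because every choice works, and later inner-product identities are insensitive to scalars (as remarked before Theorem~\ref{Z action Morita}). If desired, one could also compute $\lambda$ explicitly: for $S_J$, for instance, $S_J^2 f(t)=f(-t)$ by Fourier inversion with the self-dual measure, so $S_J^4=I$ already and $\widetilde S_{W_4}=S_J$. Such explicit computations are not needed for the existence statement.
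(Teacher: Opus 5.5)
Your proposal is correct and is essentially the paper's argument. The paper invokes the preceding theorem to get $(S_A)^n=\lambda I$, picks $\mu\in\mathbb T$ with $\mu^n=\lambda^{-1}$ and sets $\widetilde S_A:=\mu S_A$; your choice of $\overline{\mu}\,S_A$ with $\mu^n=\lambda$ is the same rescaling, and your explicit appeal to Theorem~\ref{thm:projective-metaplectic} for the existence of an implementer makes explicit what the paper leaves implicit.
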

    
    \begin{proof}
    By the theorem, we have
    \(
    (S_A)^n=\lambda I
    \)
    for some \(\lambda\in\mathbb T\). Choose \(\mu\in\mathbb T\) such that
    \(
    \mu^n=\lambda^{-1}.
    \)
    Define
    \[
    \widetilde S_A:=\mu S_A.
    \]
    Then \(\widetilde S_A\) is still unitary. Moreover, since \(\mu\) is a scalar,
    \(\widetilde S_A\) satisfies the same covariance relation:
    \[
    \widetilde S_A \W(x,\xi)\widetilde S_A^{-1}
    =
    S_A \W(x,\xi)S_A^{-1}
    =
    \W(A(x,\xi)).
    \]
    Finally,
    \[
    (\widetilde S_A)^n
    =
    (\mu S_A)^n
    =
    \mu^n(S_A)^n
    =
    \lambda^{-1}\lambda I
    =
    I.
    \]
    This proves the claim.
    \end{proof}
    \begin{thm}\label{finite Morita}
        Let $F$ be one of the finite group $\Z_2,\Z_3,\Z_4$, and $\Z_6$ and let $\af\in \E_p$ with $\af_0\neq 0,x_0\neq 0$, such that the associated $p$-adic integer $ x_{\af}=\sum_{i=0}^{\infty}x_ip^i$
            belongs to $\mathcal{K}_p$. Write 
        $x_{\af}^{-1}=\sum_{i=0}^{\infty}y_ip^i$. Then 
        \[
        \A^{\om}_{\bt}\rtimes_{TW_iT}\Z_i \sim_{\rm{M.E}} \A^{\om}_{\af}\rtimes_{W_i}\Z_i,
        \]
        where $\bt=(\bt_n)$ is given by
        \[
            \bt_n=\cfrac{1}{\af_0p^n}+\cfrac{\sum_{i=0}^{n-1}y_ip^i}{p^n}.
        \]
    \end{thm}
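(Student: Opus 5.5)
Fix $i\in\{2,3,4,6\}$ and the generator $A=W_i$ (the argument is identical for $A=W_{i,p}$). The plan is to mimic the proof of Theorem~\ref{Z action Morita}, replacing $\Z$ by $\Z_i$ and using the normalised implementers from the preceding corollary. Concretely, let $S_A$ be the composite unitary read off from the table, so $S_{W_2}=S_J^2$, $S_{W_3}=S_JS_P^{-1}$, $S_{W_4}=S_J$ and $S_{W_6}=S_PS_J$. By Theorem~\ref{thm:projective-metaplectic}, $S_A$ satisfies $S_A\W(z)S_A^{-1}=\W(Az)$. By the preceding theorem, $(S_A)^i=\lambda I$ with $\lambda\in\T$. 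Choose $\mu\in\T$ with $\mu^i=\lambda^{-1}$ and set $\widetilde S_A=\mu S_A$. Then $\widetilde S_A$ preserves $\mathcal S(\Q_p\times\R)$, because $S_J$, $S_P$, $S_D$ and their inverses do, and $(\widetilde S_A)^i=I$. Hence $\tau_{[m]}:=(\widetilde S_A)^m$ is a well-defined action of $\Z_i$ on $\mathcal S(\Q_p\times\R)$. Strong continuity is automatic, since $\Z_i$ is discrete.

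Next I verify the inner-product hypotheses of Proposition~\ref{M.E of tori}. Scalars of modulus one cancel in both inner products, so it suffices to treat $S_A$. Write $A=X_1\cdots X_k$ with $X_j\in\{J,P,D\}^{\pm1}$. Applying Proposition~\ref{inner product solenoid} and its corollary letter by letter, exactly as in the proof of Theorem~\ref{Z action Morita}, gives two identities. On the right side,
\[
\langle S_A f,S_Ag\rangle_{\A^{\om}_\af}=\af_A(\langle f,g\rangle_{\A^{\om}_\af}).
\]
On the left side,
\[
{}_{\A^{\om}_\bt}\langle S_Af,S_Ag\rangle=\bt_{(TX_1T)\cdots(TX_kT)}({}_{\A^{\om}_\bt}\langle f,g\rangle)=\bt_{TAT}({}_{\A^{\om}_\bt}\langle f,g\rangle).
\]
Iterating $m$ times gives the corresponding identities with $\af_{A^m}$ and $\bt_{(TAT)^m}$. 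These descend to $\Z_i$ because $A^i=I$ forces $(TAT)^i=TA^iT=I$. Thus $\Z_i$ acts on $\A^{\om}_\bt$ through $TAT$, and $\langle TW_iT\rangle\cong\Z_i$, which is consistent with the crossed product in the statement.

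Finally, Proposition~\ref{M.E of tori} is applied with $\mathcal A_0=\A^\infty_\af$, $\mathcal B_0=\A^\infty_\bt$ and $E_0=\mathcal S(\Q_p\times\R)$. This yields $\A^\om_\bt\rtimes_{TW_iT}\Z_i\sim_{\rm M.E}\A^\om_\af\rtimes_{W_i}\Z_i$. The hypotheses on $\af$ (namely $\af\in\E_p$, $x_\af\in\mathcal K_p$, $\af_0\neq0$) are used only to guarantee that $\bt\in\Om_p^{\rm even}$ and that the Heisenberg bimodule of Section~\ref{bimodule} exists with $\omega$-multipliers on both sides.

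The main obstacle is the first step: turning the projective representation into a genuine $\Z_i$-action on the bimodule. If $(S_A)^i$ were merely a scalar, $m\mapsto S_A^m$ would not factor through $\Z_i$, and the hypotheses of Proposition~\ref{M.E of tori} would fail. The renormalisation by an $i$-th root of $\lambda^{-1}$ resolves this. One point must be checked: this rescaling leaves the covariance and inner-product identities unchanged, which holds because these identities are insensitive to phases. A secondary point is to confirm that the letter-by-letter bookkeeping on the $\bt$ side produces exactly $TW_iT$, via $TX^{\pm1}T$ for each generator. This follows from $J^{-1}=TJT$, $P^{-1}=TPT$ and $D=TDT$.
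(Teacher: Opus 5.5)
Your proposal is correct and follows essentially the same route as the paper, whose proof simply defers to the argument of Theorem~\ref{Z action Morita}. The one extra ingredient a finite group requires is to replace $S_{W_i}$ by a unimodular rescaling $\widetilde S_{W_i}$ with $(\widetilde S_{W_i})^i=I$, so that $[m]\mapsto(\widetilde S_{W_i})^m$ is a genuine $\Z_i$-action; this is exactly what the corollary preceding the theorem supplies, and you correctly note that the rescaling leaves both inner-product identities unchanged.
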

    \begin{proof}
        The proof is similar to the proof of Theorem~\ref{Z action Morita}.
    \end{proof}
%%%%%%%%%%%%%%%%%%%%%%%%%%%%%%%%%%%%%%%%%%%%%%%%%%%%%%%%%%%%%%%%%%%%%%%%%%%%%%%%%%%%%%%%%%%%%%%%%%%%%%%%%%%%%%%%%%%%%%%%%%%%%%%%%%%%%%%%%%%%%%%%%%%%%%%%%%%%%%%%%%%%%%%%%%%%%%%%%%%%%%%%%%%%%%%%%%%%%%%%%%%%%%%%%%%%%%%%%%%%%%%%%%%%%%%%%%%%%%%%%%%%%%%%%%%%%%%%%%%%%%%%

\appendix
\section{Cardinality of the set \texorpdfstring{$\mathcal{K}_p$}{Ap}}\label{Kp}

    Recall that 
    \[
    \mathcal{K}_p
    =
    \left\{
    x_\af\in \mathbb{Z}_p^\times :
    x_\af=\sum_{i=0}^{\infty}x_i p^i,\quad
    x_\af^{-1}=\sum_{i=0}^{\infty}y_i p^i,
    \text{ with } x_i,y_i \text{ even for all } i
    \right\}.
    \]
    In this appendix, we prove that for every prime $p\geq 7$, the set $\mathcal{K}_p$ is uncountable.
    
    \begin{thm}\label{uncountable}
    For $p\geq 7$, the set $\mathcal{K}_p$ is uncountable.
    \end{thm}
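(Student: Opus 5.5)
The plan is to build elements of $\mathcal K_p$ digit by digit and show that the tree of admissible digit strings branches infinitely often along every infinite path. Such a tree has $2^{\aleph_0}$ infinite paths. Distinct paths give distinct $p$-adic integers $x=\sum x_ip^i$, so this gives uncountability.

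\emph{Step 1: the digit recursion.} Write $x=\sum_{i\ge0}x_ip^i$ and $y=x^{-1}=\sum_{i\ge0}y_ip^i$. Comparing coefficients of $p^n$ in $xy=1$, with carries, gives
\[
x_0y_n+x_ny_0+\sum_{i=1}^{n-1}x_iy_{n-i}+\kappa_{n}\equiv 0 \pmod p,
\]
where $\kappa_n$ is the carry from the lower coefficients and depends only on $x_0,\dots,x_{n-1}$ and $y_0,\dots,y_{n-1}$. I will fix $x_0=y_0=p-1$; both digits are even and $(p-1)^2\equiv1\pmod p$, consistent with $-1\in\mathcal K_p$ from Section~\ref{Morita eq}. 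Then $x_0y_n+x_ny_0\equiv -(x_n+y_n)$, so the recursion becomes
\[
y_n\equiv c_n-x_n \pmod p,
\]
where $c_n\in\{0,\dots,p-1\}$ is determined by the earlier digits. This is the content of Lemma~\ref{inverse-multiply-one} made explicit.

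\emph{Step 2: counting admissible next digits.} Put $E=\{0,2,\dots,p-1\}$. I count the $x_n\in E$ for which the reduced residue $c_n-x_n \bmod p$ is again even.
\begin{itemize}
\item If $c_n$ is even, the choices $x_n\le c_n$ give even values $c_n-x_n$, while the choices $x_n>c_n$ give the odd values $c_n-x_n+p$. So there are $c_n/2+1$ admissible choices.
\item If $c_n$ is odd, the choices $x_n<c_n$ give odd values, while the choices $x_n>c_n$ give the even values $c_n-x_n+p$. So there are about $(p-1-c_n)/2$ admissible choices.
\end{itemize}
In every case at least one choice exists, since $|E|+|c_n-E|=p+1>p$. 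Hence every finite admissible string extends, and the tree has no dead ends. There are at least two choices except in boundary cases of the type $c_n=0$ or $c_n=p-2$, where exactly one choice survives.

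\emph{Step 3 (main obstacle): forcing infinitely many branchings.} Some single step can be forced, so I cannot just branch at every $n$. I will instead work with blocks of two consecutive digits and show that, for $p\ge7$, every admissible prefix has at least two admissible extensions of length $2$. The argument I would try first is the following.
\begin{itemize}
\item If step $n$ is forced, then $c_n\in\{0,p-2\}$ and $x_n$ is pinned: $x_n=0$, or $x_n=p-1$ respectively.
\item The next constant $c_{n+1}$ is then an explicit affine expression in the earlier digits. It involves the terms $x_ny_1+x_1y_n$ and the new carry.
\item The freedom left is the choice of $x_1$ (and $y_1$) at the start, which is a free parameter. I would fix it to be a digit that keeps $c_{n+1}$ away from the two bad residues whenever $c_n$ was bad.
\end{itemize}
This is where $p\ge7$ should enter: it gives at least four even digits, so there is room to avoid the two bad residues $0$ and $p-2$ while still leaving two admissible values. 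If this direct approach fails, I would fall back on a counting version. The admissible two-digit blocks form a subset of $E^2$ of size at least $|E|^2-(\text{bad})$, and one shows this exceeds $|E|$ when $|E|=(p+1)/2\ge4$, i.e.\ $p\ge7$.

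\emph{Step 4: conclusion.} Once every node has at least two admissible descendants two levels down, the binary choices along the blocks define an injection $\{0,1\}^{\N}\hookrightarrow\mathcal K_p$, so $\mathcal K_p$ is uncountable. Each constructed $x$ is automatically a unit, since $x_0\ne0$, and it lies in $\mathcal K_p$ by construction, because every digit $x_n$ and $y_n$ was chosen even.
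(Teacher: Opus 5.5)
There is a fatal gap: the root $x_0=y_0=p-1$ that you fix admits no branching at all. Suppose $x_i=y_i=p-1$ for all $i<n$, with $n\ge1$. Then both truncations $\sum_{i<n}x_ip^i$ and $\sum_{i<n}y_ip^i$ equal $p^n-1$, and
\[
(p^n-1+x_np^n)(p^n-1+y_np^n)\equiv 1-(2+x_n+y_n)p^n \pmod{p^{n+1}},
\]
so you need $x_n+y_n\equiv-2\pmod p$. Since $x_n+y_n$ is even and lies in $[0,2p-2]$, while $p-2$ is odd, this forces $x_n=y_n=p-1$. In your notation, $c_n=p-2$ at every stage, which is exactly the forced case you isolated in Step 2.

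By induction, the tree rooted at $(p-1,p-1)$ is a single path. Since $x_0=p-1$ forces $y_0=p-1$, the only element of $\mathcal K_p$ with leading digit $p-1$ is $-1$. Three consequences follow. First, $x_1$ is not a free parameter: $c_1=p-2$ is already fixed by $(p-1)^2=1+(p-2)p$. Second, the claim in Step 3 that every admissible prefix has two admissible extensions of length $2$ is false, because the all-$(p-1)$ prefix has exactly one extension of each length. Third, no counting over two-digit blocks can repair this. Your use of $p\ge7$ is also misplaced. For $p=3,5$ the only pair of even mutually inverse residues is $(p-1,p-1)$, so the same computation gives $\mathcal K_3=\mathcal K_5=\{-1\}$. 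The hypothesis must enter through the choice of the leading digits, not through the size of $E$ at later stages.

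What is missing is a different root. The paper first shows (Lemma~\ref{nontrivial even inverse pair}) that for $p\ge7$ there exist $x_0,y_0\in E$ with $x_0y_0\equiv1\pmod p$ and $(x_0,y_0)\ne(p-1,p-1)$. For such a pair, $x_0\not\equiv\pm y_0\pmod p$. Indeed, $x_0\equiv y_0$ would give $x_0^2\equiv1$, hence $x_0=p-1$; and $x_0\equiv-y_0$ is impossible for two even digits whose product is $1$ mod $p$.

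At stage $n$ the condition is $x_0y_n+y_0x_n\equiv r_n\pmod p$, where $r_n$ is determined by the earlier digits. The sets $A=\{y_0x:x\in E\}$ and $B=\{x_0y:y\in E\}$ are arithmetic progressions of length $(p+1)/2$ with differences $2y_0$ and $2x_0$. Since $|A|+|B|=p+1>p$, a solution always exists. If it were unique, then $r_n-B$ would be the complement of $A$ plus one point, which forces $2y_0\equiv\pm2x_0$; this has been excluded. Distinct solutions have distinct $x_n$, so every node has at least two children, with no need for two-digit blocks. The resulting digit sequences give an injection of $\{0,1\}^{\N}$ into $\mathcal K_p$.
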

    
    Before proving the theorem, we first show that there exists a nontrivial pair of even residue classes whose product is $1$ modulo $p$.
    
    \begin{lem}\label{nontrivial even inverse pair}
    Let $p\geq 7$ be a prime. Then there exist $a,b\in E:=\{0,2,4,\dots,p-1\}\subset \Z/p\Z$ with
    \(
    (a,b)\neq (p-1,p-1)\) \text{ and } \(ab\equiv 1 \pmod p.
    \)
    \end{lem}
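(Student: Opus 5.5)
We need $a,b$ even residues in $\{0,2,\dots,p-1\}$ (representatives in $[0,p-1]$ that are even) with $ab\equiv1 \pmod p$ and $(a,b)\neq(p-1,p-1)$. The plan is to take $a=2$ and $b$ the even representative of $2^{-1}$. Since $2^{-1}\equiv (p+1)/2 \pmod p$, whenever $(p+1)/2$ is even, that is $p\equiv 3\pmod 4$, we may take $a=2$ and $b=(p+1)/2$. Both lie in $E$ (note $(p+1)/2\le p-1$ for $p\ge3$), their product is $p+1\equiv1$, and $a=2\neq p-1$ because $p\ge7$. So only the case $p\equiv1\pmod 4$ remains.

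For $p\equiv 1\pmod 4$ we use $a=4$: its inverse modulo $p$ is a residue $r$ with $4r\equiv1$, namely $r=(p+1)/4$ or $r=(3p+1)/4$. Reducing modulo $4$, exactly one of these is an integer; when $p\equiv1\pmod 4$, $(3p+1)/4$ is an integer. Its parity depends on $p \bmod 8$: for $p=8m+1$ it equals $6m+1$ (odd), and for $p=8m+5$ it equals $6m+4$ (even). Hence for $p\equiv5\pmod8$ we take $(a,b)=(4,(3p+1)/4)$, which is valid since $4\le p-1$ and $4\neq p-1$ for $p\ge7$.

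This leaves the main obstacle, $p\equiv1\pmod8$ (e.g.\ $p=17$). Rather than chase explicit formulas, we will use a counting argument that covers all $p\ge7$ uniformly. Consider the map $x\mapsto x^{-1}$ on $\{1,\dots,p-1\}$, an involution. Suppose, for contradiction, that every even $x\in\{2,\dots,p-1\}$ has an odd inverse representative except $x=p-1$, whose inverse is $p-1$ itself. Then the inverse map sends the $(p-3)/2$ evens other than $p-1$ injectively into the odds. Since it is an involution, it must also send those odd images back to evens. Counting shows $(p-1)/2$ odds against $(p-3)/2$ such evens, so exactly one odd, necessarily self-inverse, is left over. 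The self-inverse elements are only $1$ and $p-1$, so that odd is $1$, and the involution pairs the remaining evens and odds perfectly.

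This pairing alone gives no contradiction, so we must use more structure. The natural tool is the multiplication map $x\mapsto 2x$: we will argue that if every even $e\neq p-1$ has an odd inverse, then $2^{-1}=(p+1)/2$ and $4^{-1}$ must both be odd. This forces $p\equiv 1\pmod 4$ and, as computed above, $p\equiv 1\pmod 8$. We then try the candidates $a=6,8,\dots$, or $a=p-3$, whose inverse is $-(3^{-1})$ and whose parity can be analysed through $p \bmod 3$. If this case analysis does not close, the fallback is to verify the small residue classes computationally and use the explicit choice $a=2^k$ with $k$ chosen so that $2^{-k}$, realised as $(p^j+1)/2^k$-type expressions, is even.
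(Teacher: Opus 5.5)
There is a genuine gap: the case $p\equiv 1\pmod 8$ is never proved, so the proposal does not establish the lemma.

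Your cases $p\equiv 3\pmod 4$, with $(2,(p+1)/2)$, and $p\equiv 5\pmod 8$, with $(4,(3p+1)/4)$, are correct. For the remaining case, the involution count gives no contradiction, as you concede. The sentence ``we will argue that \dots'' only restates the contrapositive of the two cases already settled. What follows it is a list of things to try, not an argument.

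Moreover, the suggested candidates fail. For $p=17$ the only admissible pairs are $(10,12)$ and $(12,10)$. So $a=6$, $a=8$ and $a=p-3=14$ all fail. So does every power of $2$: the even representatives $2,4,8$ have inverses $9,13,15$, and $16$ only gives the excluded pair $(p-1,p-1)$. A computational check of small cases cannot cover the infinitely many primes $p\equiv 1\pmod 8$.

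The missing idea is already hidden in your two successful cases. Write $p-1=2^nq$ with $q$ odd; your pairs are the instances $n=1,2$ of $a=2^n$, $b=p-q$. This pair works in general:
\begin{itemize}
\item $a$ is even and at most $p-1$;
\item $b$ is even because $p$ and $q$ are odd;
\item $ab=2^np-(p-1)\equiv 1\pmod p$;
\item $(a,b)\neq(p-1,p-1)$ precisely when $q>1$.
\end{itemize}
The only primes left are the Fermat primes $p=2^{n}+1$, where $n=2^r$ with $r\geq 2$ since $p\geq 7$. There no power of $2$ can work, as $p=17$ shows, so a different pair is needed. Since $2^{2^r}\equiv 1\pmod 5$, one gets $p\equiv 2\pmod 5$. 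Hence $(2p+1)/5$ is an odd integer, and $a=p-5$, $b=p-(2p+1)/5$ are even with
\[
ab\equiv(-5)\cdot\bigl(-(2p+1)/5\bigr)=2p+1\equiv 1\pmod p .
\]
This two-case construction is exactly the paper's proof. Without a uniform construction of this kind, plus a separate treatment of the Fermat primes, your argument leaves $p\equiv 1\pmod 8$ open.
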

    
    \begin{proof}
    Write
    \(
    p-1=2^n q,
    \)
    where $q$ is odd. Suppose first that $q>1$. Set
    \[
    a=2^n
    \qquad\text{and}\qquad
    b=p-q.
    \]
    Then both $a$ and $b$ are even, and neither is equal to $p-1$. Moreover,
    \[
    ab=2^n(p-q)=2^n p-2^n q = 2^n p-(p-1)=(2^n-1)p+1\equiv 1 \pmod p.
    \]
    
    Now suppose that $q=1$. Then
    \(
    p=2^n+1.
    \)
    Since $p$ is prime, $n$ must be a power of $2$. Indeed, if $n=2^r m$ with $m>1$ odd, then
    \[
    p=2^n+1=\bigl(2^{2^r}\bigr)^m+1
    \]
    is divisible by $2^{2^r}+1$, a contradiction. Hence $n=2^r$ for some $r\geq 2$, and in particular $n$ is a multiple of $4$. Therefore
    \(
    2^n\equiv 1 \pmod 5,
    \)
    so
    \(
    p=2^n+1\equiv 2 \pmod 5.
    \)
    It follows that
    \[
    2p+1\equiv 0 \pmod 5.
    \]
    Since $2p+1$ is odd, the integer
    \(
    \frac{2p+1}{5}
    \) is also odd. Now set
    \(
    a=p-5 \text{ and } b=p-\frac{2p+1}{5}.
    \)
    Then $a,b\in E$, and clearly $(a,b)\neq (p-1,p-1)$. Also,
    \[
    ab\equiv (-5)\left(-\frac{2p+1}{5}\right)=2p+1\equiv 1 \pmod p.
    \]
    This completes the proof.
    \end{proof}
    
    The next lemma is the key combinatorial step used in the recursive construction.
    
    \begin{lem}\label{ap-step}
    Let $x_0,y_0\in E\subset \Z/p\Z$ be such that
    \(
    x_0y_0\equiv 1 \pmod p
    \)
    \text{ and }
    \(
    x_0,y_0\neq p-1.
    \)
    Then for every $c\in \Z/p\Z$, the congruence
    \begin{equation}\label{eq:linear-step}
    x_0 y + y_0 x \equiv c \pmod p
    \end{equation}
    has at least two solutions $(x,y)\in E\times E$.
    \end{lem}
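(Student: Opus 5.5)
Fix $c\in\Z/p\Z$. Since $x_0y_0\equiv 1 \pmod p$, both $x_0$ and $y_0$ are nonzero and $x_0^{-1}=y_0$. So for each $x\in E$, the congruence \eqref{eq:linear-step} has exactly one solution $y\equiv y_0c-y_0^2x \pmod p$. Put $u:=y_0c$, $v:=y_0^2\neq 0$ and $F:=u-vE$. The number of solutions $(x,y)\in E\times E$ is then $|E\cap F|$, because $x\mapsto u-vx$ is a bijection. I will show $|E\cap F|\ge 2$.

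\emph{Step 1 (pigeonhole).} We have $|E|=|F|=\frac{p+1}{2}$, so $|E\cap F|\geq |E|+|F|-p=1$. Equality $|E\cap F|=1$ holds exactly when $E\cup F=\Z/p\Z$. Write $O:=\{1,3,\dots,p-2\}$ for the complement of $E$. Passing to complements, equality holds exactly when
\[
u-vO \;=\; E\setminus\{z\}
\]
for the unique point $z\in E\cap F$. Suppose, for contradiction, that this happens.

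\emph{Step 2 (arithmetic progressions).} The set $O$ is an arithmetic progression of difference $2$ and length $m=\frac{p-1}{2}$. Hence $u-vO$ is an arithmetic progression of difference $2v$. Since $p\ge 7$, we have $2\le m\le p-2$. In that range I will use the standard fact that the difference of an arithmetic progression in $\Z/p\Z$ is determined up to sign. The proof is a count: for a progression $A$ of difference $d$ and length $m$, one has $|A\cap(A+d')|=m-1$ iff $d'=\pm d$, for $2\le m\le p-2$.

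I then split according to the position of $z$.
\begin{itemize}
\item If $z\in\{0,p-1\}$, then $E\setminus\{z\}$ is a progression of difference $2$. So $2v=\pm2$, i.e.\ $y_0^2=\pm1$.
\item If $z$ is an interior point of $E$, I must show $E\setminus\{z\}$ is not an arithmetic progression at all, equivalently that its complement $O\cup\{z\}$ is not one. I plan to compute $|A\cap(A+d)|$ for $A=E\setminus\{z\}$. For $d=\pm2$ the count is $m-2$, because the removed interior point breaks two consecutive pairs. For other $d$ I would try to bound it by comparison with $|E\cap(E+d)|$, which is known explicitly since $E$ is a progression. \textbf{This step is the main obstacle}, and it is where I expect the real work to lie.
\end{itemize}

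\emph{Step 3 (using the hypotheses).} It remains to rule out $y_0^2=\pm1$.
\begin{itemize}
\item If $y_0^2=1$, then $y_0=\pm1$. But $1\notin E$, and $y_0=-1=p-1$ is excluded by hypothesis.
\item If $y_0^2=-1$, then $x_0=y_0^{-1}=-y_0=p-y_0$. Since $y_0\in E\setminus\{0\}$ is even, $p-y_0$ is odd, so $x_0\notin E$, a contradiction.
\end{itemize}
Thus $|E\cap F|\ge 2$, and \eqref{eq:linear-step} has at least two solutions in $E\times E$.
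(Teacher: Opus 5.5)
Everything you actually prove is correct, and the skeleton matches the paper's proof. Your pigeonhole bound replaces the paper's appeal to Cauchy--Davenport. Your identity $u-vO=E\setminus\{z\}$ is the complemented form of the paper's ``$c_0-B$ is $A^c$ with one point adjoined''. Your conclusion $y_0^2=\pm1$ is the paper's $x_0\equiv\pm y_0$, and Step~3 is the paper's parity argument. The gap is the case you flagged: you never show that $E\setminus\{z\}$ is not an arithmetic progression when $z$ is an interior point of $E$. This is not a side issue. It is the whole rigidity input (the content of the paper's auxiliary lemma on progressions), and without it Step~2 says nothing when $z\notin\{0,p-1\}$. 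As written, the lemma is therefore not proved.

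Your suggested comparison closes it, with one extra observation. Write $p=2m+1$ and dilate by $2^{-1}$; then $E$ becomes the interval $\{0,\dots,m\}$. Hence $|E\cap(E+2s)|=m+1-\min(s,p-s)$ for $0\le s\le p-1$, so $|E\cap(E+t)|\le m-2$ unless $t\in\{0,\pm2,\pm4\}$. For $A'=E\setminus\{z\}$ and $t\neq0$,
\[
|A'\cap(A'+t)|=|E\cap(E+t)|-\varepsilon(z-t)-\varepsilon(z+t),\qquad \varepsilon(w)=\begin{cases}1,& w\in E,\\ 0,& w\notin E.\end{cases}
\]
For interior $z$ you already have the value $m-2$ at $t=\pm2$.

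At $t=\pm4$ the crude comparison only gives $m-1$, which is not enough. The formula, however, gives $m-1-\varepsilon(z-4)-\varepsilon(z+4)\le m-2$: if $z\ge4$ then $z-4\in E$, and if $z=2$ then $z+4=6\in E$. This is where $p\ge7$ enters. It is harmless, because for $p=3,5$ the only even inverse pair is $(p-1,p-1)$, so the lemma is vacuous there. All other $t\neq0$ give $|A'\cap(A'+t)|\le|E\cap(E+t)|\le m-2$. A progression $S$ of length $m$ and difference $d$ always has $|S\cap(S+d)|=m-1$, so $A'$ is not a progression, and your proof is complete.

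Once completed, your argument is more careful than the paper's. The paper's progression lemma simply asserts that $b_{j-1},b_{j+1}$ are consecutive terms of the shorter progression. As stated, that lemma fails for some lengths: $\{0,1,2\}\setminus\{1\}=\{0,2\}$ has difference $2\not\equiv\pm1$ when $p\ge5$. It does hold for the length $(p-1)/2$ with $p\ge7$, which is the case needed here.
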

    
    \begin{proof}
    We consider the sets
    \[
    A:=\{y_0x:x\in E\}
    \qquad\text{and}\qquad
    B:=\{x_0y:y\in E\}.
    \]
    Since $x_0$ and $y_0$ are invertible modulo $p$, we have
    \(
    |A|=|B|=\frac{p+1}{2}.
    \)
    Moreover, both $A$ and $B$ are arithmetic progressions in $\Z/p\Z$, with common differences $2y_0$ and $2x_0$, respectively.
    
    For $c\in \Z/p\Z$, the number of solutions of \eqref{eq:linear-step} is exactly
    \(
    N(c):=\bigl|A\cap (c-B)\bigr|.
    \)
    By the Cauchy--Davenport theorem \cite{Dav35},
    \[
    |A+B|\geq \min\{p,|A|+|B|-1\}
    =\min\left\{p,\frac{p+1}{2}+\frac{p+1}{2}-1\right\}=p.
    \]
    Hence $A+B=\Z/p\Z$, and therefore $N(c)\geq 1$ for every $c\in \Z/p\Z$.
    
    We claim that in fact $N(c)\geq 2$ for every $c$. Suppose, towards a contradiction, that $N(c_0)=1$ for some $c_0\in\Z/p\Z$. Then
    \(
    |A\cap (c_0-B)|=1.
    \)
    Since $|A|=|c_0-B|=(p+1)/2$, it follows that
    \[
    |A\cup (c_0-B)|=\frac{p+1}{2}+\frac{p+1}{2}-1=p,
    \]
    so
    \(
    A\cup (c_0-B)=\Z/p\Z.
    \)
    Thus $(c_0-B)$ is obtained from the complement $A^c$ by adjoining one point.
    
    Now $A^c$ is also an arithmetic progression in $\Z/p\Z$, with common difference $2y_0$, while $(c_0-B)$ is an arithmetic progression with common difference $-2x_0$. We now use the following elementary fact.
    
    \begin{lem}\label{AP union lemma}
    Let $p$ be a prime. Suppose $A, B\subset \Z/p\Z$ are arithmetic progressions with common differences $d_A$ and $d_B$, respectively. If
    \(
    B=A\cup\{s\}
    \)
    for some $s\in \Z/p\Z$, then
    \[
    d_A\equiv \pm d_B \pmod p.
    \]
    \end{lem}
    
    \begin{proof}
    Write $|A|=k$, so $|B|=k+1$. Since $B$ is an arithmetic progression, we may write
    \[
    B=\{b_0,b_0+d_B,\dots,b_0+kd_B\}.
    \]
    Set $b_j=b_0+jd_B.$ As $A\subset B$ and $|A|=k$, there exists $j$ such that
    \(
    A=B\setminus\{b_j\}.
    \)
    If $0<j<k$, then $b_{j-1},b_{j+1}\in A$, and their difference is
    \(
    b_{j+1}-b_{j-1}=2d_B.
    \)
    Since these are consecutive terms of the progression $A$, this would force
    \(
    d_A\equiv \pm 2d_B \pmod p.
    \)
    
    On the other hand, any other two consecutive terms of $A$ differ by $d_B$, so also
    \(
    d_A\equiv \pm d_B \pmod p,
    \)
    which is impossible because $d_B\not\equiv 0\pmod p$. Hence $j=0$ or $j=k$.
    
    If $j=0$, then $A=\{b_1,\dots,b_k\}$, so $d_A\equiv d_B\pmod p$. If $j=k$, then reversing the order gives $d_A\equiv -d_B\pmod p$. Thus in all cases,
    \[
    d_A\equiv \pm d_B \pmod p.
    \]
    \end{proof}
    
    \noindent Applying Lemma~\ref{AP union lemma} with $A=A^c$ and $B=c_0-B$, we obtain
    \(
    2y_0 \equiv \pm(-2x_0)\pmod p,
    \)
    and hence
    \(
    x_0\equiv \pm y_0 \pmod p.
    \)
    
    If $x_0\equiv y_0\pmod p$, then
    \(
    x_0^2\equiv 1\pmod p,
    \)
    so $x_0\equiv \pm 1\pmod p$. Since $x_0\in E$, the only possible representative is $x_0=p-1$, contradicting the hypothesis.
    
    If $x_0\equiv -y_0\pmod p$, then $x_0$ is represented by $p-y_0$, which is odd because $y_0$ is even. This contradicts $x_0\in E$.
    Thus $N(c)\geq 2$ for every $c\in \Z/p\Z$. The proof is complete.
    \end{proof}
    
    \begin{proof}[Proof of Theorem~\ref{uncountable}]
    Fix a pair $(x_0,y_0)\in E\times E$ as in Lemma~\ref{nontrivial even inverse pair}, so that
    \[
    x_0y_0\equiv 1 \pmod p
    \qquad\text{and}\qquad
    x_0,y_0\neq p-1.
    \]
    
    We shall recursively construct digits
    \(
    x_n,y_n\in E \qquad (n\geq 1)
    \)
    such that, if
    \(
    X_n:=x_0+x_1p+\cdots+x_np^n
    \)
    and
    \(
    Y_n:=y_0+y_1p+\cdots+y_np^n,
    \)
    then
    \(
    X_nY_n\equiv 1 \pmod{p^{n+1}}\text{ for all }n\geq 0.
    \)
    
    For $n=0$, this holds by construction. Suppose now that $x_1,\dots,x_{n-1}$ and $y_1,\dots,y_{n-1}$ have already been chosen so that
    \(
    X_{n-1}Y_{n-1}\equiv 1 \pmod{p^n}.
    \)
    Then there exists $c_n\in \Z/p\Z$ such that
    \(
    X_{n-1}Y_{n-1}=1+c_n p^n \pmod{p^{n+1}}.
    \)
    Now
    \begin{align*}
    (X_{n-1}+x_np^n)(Y_{n-1}+y_np^n)
    &\equiv X_{n-1}Y_{n-1}+X_{n-1}y_n p^n+Y_{n-1}x_n p^n \pmod{p^{n+1}}\\
    &\equiv 1+\bigl(c_n+x_0y_n+y_0x_n\bigr)p^n \pmod{p^{n+1}},
    \end{align*}
    because $X_{n-1}\equiv x_0\pmod p$ and $Y_{n-1}\equiv y_0\pmod p$.
    
    Therefore, in order to achieve
    \(
    X_nY_n\equiv 1 \pmod{p^{n+1}},
    \)
    it is enough to choose $(x_n,y_n)\in E\times E$ satisfying
    \[
    x_0y_n+y_0x_n\equiv -c_n \pmod p.
    \]
    By Lemma~\ref{ap-step}, this congruence has at least two solutions in $E\times E$. Hence the recursion can be continued indefinitely, and at each stage there are at least two choices.
    
    Thus we obtain at least $2^{\aleph_0}$ sequences $(x_n)_{n\geq 0}$ with all $x_n\in E$, and for each such sequence there exists a sequence $(y_n)_{n\geq 0}$ with all $y_n\in E$ such that
    \[
    \left(\sum_{n=0}^\infty x_np^n\right)\left(\sum_{n=0}^\infty y_np^n\right)=1
    \]
    in $\Z_p$. Indeed, for every $N$ we have
    \(
    X_NY_N\equiv 1 \pmod{p^{N+1}},
    \)
    and passing to the limit in $\Z_p$ yields
    \(
    xy=1,
    \)
    where
    \(
    x=\sum_{n=0}^\infty x_np^n
    \text { and }
    y=\sum_{n=0}^\infty y_np^n.
    \)
    By construction, all digits of both $x$ and $y=x^{-1}$ are even, so $x\in \mathcal{K}_p$.
    
    Finally, distinct digit sequences $(x_n)_{n\geq 0}$ produce distinct elements of $\Z_p$. Hence $\mathcal{K}_p$ contains at least $2^{\aleph_0}$ elements, and therefore is uncountable.
    \end{proof}

%%%%%%%%%%%%%%%%%%%%%%%%%%%%%%%%%%%%%%%%%%%%%%%%%%%%%%%%%%%%%%%%%%%%%%%%%%%%%%%%%%%%%%%%%%%%%%%%%%%%%%%%%%%%%%%%%%%%%%%%%%%%%%%%%%%%%%%%%%%%%%%%%%%%%%%%%%%%%%%%%%%%%%%%%%%%%%%%%%%%%%%%%%%%%%%%%%%%%%%%%%%%%%%%%%%%%%%%%%%%%%%%%%%%%%%%%%%%%%%%%%%%%%%%%%%%%%%%%%%%%%%%
\textbf{Acknowledgement:}
    The author is grateful to his advisor Dr. Sayan Chakraborty for helpful discussions and valuable suggestions regarding this article. The author also thanks Dhrubajyoti Das for useful discussions on $p$-adic analysis. The author acknowledges the use of ChatGPT, developed by OpenAI, for assistance with language editing and improving the presentation of parts of this manuscript. The author was supported by the TCG CREST PhD Fellowship.
%%%%%%%%%%%%%%%%%%%%%%%%%%%%%%%%%%%%%%%%%%%%%%%%%%%%%%%%%%%%%%%%%%%%%%%%%%%%%%%%%%%%%%%%%%%%%%%%%%%%%%%%%%%%%%%%%%%%%%%%%%%%%%%%%%%%%%%%%%%%%%%%%%%%%%%%%%%%%%%%%%%%%%%%%%%%%%%%%%%%%%%%%%%%%%%%%%%%%%%%%%%%%%%%%%%%%%%%%%%%%%%%%%%%%%%%%%%%%%%%%%%%%%%%%%%%%%%%%%%%%%%%%%%    
     
\end{document}